\newtheorem{theorem}{Theorem}[section]
\newtheorem*{theorem*}{Theorem}
\newtheorem{proposition}[theorem]{Proposition}
\newtheorem{lemma}[theorem]{Lemma}
\newtheorem{corollary}[theorem]{Corollary}
\theoremstyle{definition}
\newtheorem{remark}[theorem]{Remark}
\newtheorem{definition}[theorem]{Definition}
\newtheorem*{definition*}{Definition}
\newtheorem{example}[theorem]{Example}
\newcommand{\Spec}{\operatorname{Spec}} 
\newcommand{\Z}{\mathbb{Z}}
\newcommand{\A}{\mathbb{A}} 
\newcommand{\Proj}{\mathbb{P}} 
\newcommand{\GW}{\mathrm{GW}} 
\newcommand{\Adeg}{\deg^{\mathbb{A}^1}} 
\newcommand{\EAdeg}{\deg^{\mathbb{A}^1,\text{Euler}}} 
\newcommand{\Oh}{\mathscr{O}} 
\newcommand{\Tr}{\mathrm{Tr}} 
\newcommand{\ind}{\mathrm{ind}} 
\newcommand{\Hom}{\text{Hom}}
\begin{document}

\title{Global $\A^1$ degrees of covering maps between modular curves}

\author{Hyun Jong Kim and Sun Woo Park}
\address{The Department of Mathematics, University of Western Ontario -- Middlesex College, London, Ontario, Canada, N6A 5B7}
\email{\href{mailto:hyunjongkimmath@gmail.com}{hyunjongkimmath@gmail.com}}
\address{ Department of Mathematics, University of Wisconsin -- Madison, 480 Linconln Dr., Madison, WI 53706, USA}
\email{\href{mailto:swpark2008@gmail.com}{swpark2008@gmail.com}}
\date{\today}

\begin{abstract}
Given a projective smooth curve $X$ over any field $k$, we discuss two notions of global $\mathbb{A}^1$ degree of a finite morphism of smooth curves $f: X \to \mathbb{P}^1_k$ satisfying certain conditions. One originates from computing the Euler number of the pullback of the line bundle $\mathscr{O}_{\mathbb{P}^1_k}(1)$ as a generalization of Kass and Wickelgren's construction of Euler numbers. The other originates from the construction of global $\mathbb{A}^1$ degree of morphisms of projective curves by Kass, Levine, Solomon, and Wickelgren as a generalization of Morel's construction of $\mathbb{A}^1$-Brouwer degree of a morphism $f: \mathbb{P}^1_k \to \mathbb{P}^1_k$. We prove that under certain conditions on $N$, both notions of global $\mathbb{A}^1$ degrees of covering maps between modular curves $X_0(N) \to X(1)$, $X_1(N) \to X(1)$, and $X(N) \to X(1)$ agree to be equal to sums of hyperbolic elements $\langle 1 \rangle + \langle -1 \rangle$ in the Grothendieck-Witt ring $\mathrm{GW}(k)$ for any field $k$ whose characteristic is coprime to $N$ and the pullback of $\mathscr{O}_{\Proj^1}(1)$ is relatively oriented. 
\end{abstract}

\maketitle

\tableofcontents

\section{Introduction}\label{sec: Introduction}
$\A^1$ enumerative geometry uses $\A^1$ homotopy theory over the category of smooth schemes over $k$ to explicitly analyze families of algebraic objects. Morel and Voevodsky constructed the $\A^1$ homotopy category of smooth schemes, which extends homotopy theory to algebraic geometry and views $\A^1$ as a replacement for the unit interval \cite{mv}.  
Applications of $\A^1$ homotopy theory led to generalizations of various profound results in enumerative geometry. Morel (\cite{morel06}), Cazanave (\cite{cazanave}), Kass and Wickelgren (\cite{kwEKL},  \cite{kwBezout}), and many others proposed generalizations of topological degrees of continuous maps as $\A^1$ degrees of morphisms. Kass and Wickelgren used Euler numbers of vector bundles over smooth schemes to count the number of lines on a smooth cubic surface over $\Proj^3$ over any field $k$ with respect to a choice of orientation \cite{kwcubic}. Hoyois proposed a refinement of Grothendieck-Lefschetz trace formula \cite{hoyois}. Levine gave a generalization of Euler characteristic of smooth projective varieties \cite{levine}. Asok and Fasel proved a refinement of Hopf degree theorem and $\A^1$ cohomotopy groups of smooth schemes \cite{af18}. Lastly, Kobin and Taylor extended Kass and Wickelgren's construction of Euler numbers to root stacks \cite{ktstack}.

Arithmetic properties of modular curves, such as the number of components of modular curves, change with respect to different base fields. Snowden gave explicit combinatorial descriptions of real components of modular curves \cite{snowden}. Over the real numbers, the components of $X_0(N)$ comprise of $2^{\omega_{\text{odd}}(N)-1}$ copies of $S^1$ where $\omega_{\text{odd}}(N)$ is the number of distinct odd prime factors of $N$. In Definition \ref{naiveEulerequiv}, we enrich the notion of degrees of finite, relatively orientable maps from a curve to $\Proj^1$. Definition \ref{sum_local_A1} is another enriched notion of degree that emerged in the literature after we studied the former notion. We apply these notions to relatively oriented maps from coarse moduli schemes of elliptic curves with level structures to $X(1)$ using global $\A^1$ degrees of a morphism.

Given a continuous map of orientable manifolds $f: M \to N$ of dimensions $n$ over $\mathbb{R}$, the degree of $f$ is the degree of the induced morphism
\begin{equation*}
    f_* : H_n(M, \mathbb{Z}) \to H_n(N, \mathbb{Z}).
\end{equation*}
The degree of $f$ can be computed from the local degrees of $f$ at each of the fibers of a point $y \in N$. Let $V$ be a small enough open neighborhood of $y$. For each fiber $x \in f^{-1}(y)$, pick an open neighborhood $x \in U \subset f^{-1}(V)$ such that $U \cap f^{-1}(y) = \{x\}$. Then the local degree at $x$ is given by the degree of
\begin{equation*}
    f: S^n \cong U / (U - \{x\}) \to V / (V - \{y\}) \cong S^n.
\end{equation*}
The sum of local degrees at each fiber is equal to the degree of $f$. 

Morel's construction of $\A^1$-Brouwer degree of $f: \Proj^n_k/\Proj^{n-1}_k \to \Proj^n_k/\Proj^{n-1}_k$ extends the notion of degree of continuous functions to the $\A^1$ homotopy category of smooth schemes $X$ over $k$. The $\A^1$-Brouwer degree (or the global $\A^1$ degree) of a morphism $f: \Proj^n_k/\Proj^{n-1}_k \to \Proj^n_k/\Proj^{n-1}_k$ is defined as a morphism
\begin{equation*}
    \Adeg: [\Proj^n_k/\Proj^{n-1}_k, \Proj^n_k/\Proj^{n-1}_k]_{\A^1} \to  \widetilde{CH}^n(\Proj^n_k/\Proj^{n-1}_k) \cong \GW(k),
\end{equation*}
where $\Proj^n_k/\Proj^{n-1}_k$ is the $\A^1$-homotopic $n$-sphere, $\widetilde{CH}^i(X)$ is the $i$th oriented Chow-Witt ring of a smooth scheme $X$ of dimension $n$, and $[ \cdot, \cdot]$ is the set of $\mathbb{A}^1$-homotopy classes of morphisms. Kass and Wickelgren showed that the global $\A^1$ degree of $f$ is the sum of its local $\A^1$ degrees \cite[Proposition 14]{kwEKL}.

In a similar vein, one can extend the above result by considering any finite morphism of form $f: X \to \Proj^n_k \cong \Proj^1_k$ for a smooth projective scheme $X$ of dimension $n$ over $k$. Recall that the Hopf degree theorem states that the degree map 
\begin{equation*}
    \deg: [M, S^n] \to H_n(M, \mathbb{Z}) \cong \mathbb{Z}
\end{equation*}
induces a bijection, where $M$ is a smooth $n$-dimensional manifold over $\mathbb{R}$. Setting $M = S^n$ gives the Hurewicz theorem. In light of this observation, Morel's construction of $\A^1$-Brouwer degree is the analogue of Hurewicz theorem over $\A^1$ homotopy category of smooth schemes over $k$. Furthermore, Asok and Fasel proved the $\A^1$ analogue of Hopf degree theorem formulated by defining an abelian group homomorphism given by
\begin{equation*}
    [X, \Proj^n_k / \Proj^{n-1}_k]_{\A^1} \to \widetilde{CH}^n(X)
\end{equation*}
for any smooth schemes $X$ of dimension $\geq 2$, see \cite[Theorem 2]{adf17} and \cite[Theorem 1]{af18} for further details.

Given a smooth projective curve $C$ over $k$, however, the set of $\A^1$-homotopy classes of morphisms $[C, \Proj^1_k]$ may not necessarily have a functorial abelian group structure. As an alternative, there are two possible notions of defining global $\A^1$ degree of a finite morphism $\pi: C \to \Proj^1_k/\infty$. One, which we introduce in this paper, is constructed from utilizing Euler numbers of a line bundle $\pi^*(\Oh_{\Proj^1_k}(d))$ of $C$. See Definition \ref{def:relative_orientation} for various notions of (relative) orientations that we use in the below definitions. Although the choices of relative orientations are suppressed in the notations below, the below notions of global $\mathbb{A}^1$-degrees depend on such choices.


\begin{definition*}[Global Euler $\A^1$ degree, Definition \ref{naiveEulerequiv}]
Let $\pi: C \to \mathbb{P}^1_k$ be a finite morphism of smooth curves over a field $k$ such that $\pi^* \Oh_{\Proj^1_k} (1)$ is relatively oriented. We can write $\pi = [s_0: s_1]$ where $s_0, s_1$ are global sections of $\pi^* \Oh_{\Proj^1_k}(1)$ with no common zeroes. Let $q \in \Proj^1_k(k)$ be a $k$-rational point, $F_q(X,Y)$ the monic minimal polynomial of $q$, and $Z$ a zero section of $F_q(s_0,s_1)$ considered as a section of $\pi^* \Oh_{\Proj^1_k}(1)$. The global Euler $\A^1$-degree of $\pi: C \to \mathbb{P}^1_k$ is given by
\begin{equation*}
    \EAdeg \pi := e(\pi^* \Oh_{\Proj^1_k}(1), F_q(s_0,s_1)) = \sum_{p \in Z} \ind_{p} F_q(s_0, s_1),
\end{equation*}
where $e(\pi^* \Oh_{\Proj^1_k}(1), F_q(s_0,s_1))$ is the Euler number of the line bundle $\pi^* \Oh_{\Proj^1_k}(1)$ with respect to $F_q(s_0, s_1)$.
\end{definition*}

The other notion originates from work \cite{kass2023quadraticallyenrichedcountrational} by Kass, Levine, Solomon, and Wickelgren that generalizes Morel's construction of $\A^1$-Brouwer degree. We the preceding definition \cite[Theorem 8.7 and Definition 8.8]{PW21} of this notion from Pauli and Wickelgren's lecture notes. 

\begin{definition}[Global $\A^1$-degree] \cite[Theorem 8.7 and Definition 8.8]{PW21}
\label{sum_local_A1}

Let $f: X \to Y$ be a proper map of smooth $d$-dimensional $k$-schemes such that $Tf$ is invertible at some point. Further assume that $f$ is relatively orientable (and equipped with a relative orientation) after removing a codimension $2$ subset of $Y$ and that $Y$ is $\A^1$-chain connected\footnote{A $k$-scheme $Y$ is $\A^1$-chain connected if for any finitely generated separable field extension $L/k$ and any two $L$-points $x,y \in Y(L)$ there are $x = x_0, x_1, \ldots, x_{n-1}, x_n = y \in Y(L)$ and $\gamma_i: \mathbb{A}^1_L \to Y$ with $\gamma_i(0) = x_{i-1}$ and $\gamma_i(1) = x_i$ for $i = 1,\ldots,n$.} with a $k$-point $y$. The global $\A^1$-degree of $f$ is defined as 
$$\deg^{\A^1} f := \sum_{x \in f^{-1}(y)} \deg^{\A^1}_x f$$
and is independent of a generically chosen point $y$.

Here $\deg^{\A^1}_x f$ denotes the local $\A^1$-degree of $f$ at the point $x$, which is defined to be the $\A^1$ Brouwer degree of the composition

\begin{align}
\begin{split}
\mathbb{P}^n_{k}/\mathbb{P}^{n-1}_{k}\simeq T_qX/(T_qX-\{0\})\simeq U/(U-\{q\})\\
\xrightarrow{\bar{f}}Y/(Y- \{p\})\simeq T_pY/(T_pY-\{0\})\simeq \mathbb{P}^n_{k}/\mathbb{P}^{n-1}_{k}.
\end{split}
\end{align}
of morphisms in the $\A^1$ homotopy category of smooth schemes over $k$

\end{definition}

The two notions of global $\A^1$-degrees are generally not equal to each other, cf. Remark \ref{nonex2}. We will later show in Theorems \ref{thm:two_notions_agree} and \ref{mainA} that both global $\A^1$ degrees of a covering $\pi: C \to \mathbb{P}^1_k$ of curves happen to both equal integer multiples of the hyperbolic element $\langle 1 \rangle + \langle -1 \rangle$ in the Grothendieck-Witt ring $\GW(k)$ under certain conditions. We paraphrase these theorems below:

\begin{theorem*}[Theorem \ref{thm:two_notions_agree}] \label{main0}
Let $C$ be a smooth projective curve over a field $k$. Suppose that $\pi: C \to \Proj^1_k$ is a finite morphism satisfying some conditions specified in Theorem \ref{thm:two_notions_agree}. If the pullback of line bundles $\pi^* \Oh_{\mathbb{P}^1}(1)$ and $\pi^* \Oh_{\mathbb{P}^1}(2)$ are relatively oriented, then the global Euler $\A^1$ degree and the global $\A^1$ degrees of $\pi$ are equal to the integer multiple of the hyperbolic element in $\GW(k)$.
\end{theorem*}
\begin{theorem*}[Theorem \ref{mainA}]\label{main1}
Given a fixed integer $N \geq 2$, let $k$ be any field such that $\text{char}(k) \nmid 6N$. Denote by $\pi \in \{\pi_{N}, \pi_{1,N}, \pi_{0,N} \}$ the covering maps of modular curves over $\mathbb{Z}[1/N]$ from $X(N)$, $X_1(N)$, and $X_0(N)$ to $X(1)$, see Definition \ref{cover} for further details. 
Suppose the line bundles $\pi^* \Oh_{\Proj^1_k}(1)$ and $\pi^* T\Proj^1_k$ are relatively oriented. Then the global Euler $\A^1$ degrees and the global $\A^1$ degrees of $\pi$ are equal to integer multiples of the hyperbolic element in $\GW(k)$, except for $\pi = \pi_{0,N}$ where $4 \nmid N$ and none of the prime factors of $q$ of $N$ satisfy $q \equiv 3 \text{ mod } 4$.
\end{theorem*}

The structure of the paper is as follows. In Section 2, we overview relevant results from $\A^1$-homotopy theory, in particular Morel's notion of $\A^1$-Brouwer degree and Kass and Wickelgren's construction of Euler numbers. In Section \ref{sec: Global A1 degrees}, we discuss how one can compute the Euler numbers of the line bundle $\pi^* \Oh_{\Proj^1_k} (n)$ given a choice of a finite morphism of smooth projective curves $\pi: C \to \Proj^1_k$. We define global Euler $\A^1$ degrees of finite morphisms of smooth projective curves $\pi: C \to \Proj^1_k$ under the assumption that $\pi^* \Oh_{\Proj^1_k}(1)$ is relatively oriented over $k$. We then study sufficient conditions under which the global Euler $\A^1$ degrees and global $\A^1$ degrees of finite morphisms $\pi \to \Proj^1_k$ agree. Over such base field $k$, we finally proceed to compute the two notions of global $\A^1$ degrees of covering maps of modular curves in Section \ref{sec: covering maps}. We recall some facts on coarse moduli schemes of elliptic curves with level structures, which we combine with the aforementioned results from Section 2 to Theorem \ref{mainA}. We also list explicit computations of global $\A^1$ degrees of covering maps $\pi_{0,N}:X_0(N) \to X(1)$ for some values of $N$, and discuss what the two variants of global $\A^1$ degrees are for other integers $N$ not divisible by primes congruent to 3 modulo 4.

\subsection{Notation}
Unless otherwise specified, we will notate:
\begin{itemize}
    \item $k$: a field.
    \item $X, Y, Z$: smooth schemes over $k$.
    \item $n$: the dimension of $X$.
    \item $\A^n_k$: the affine $n$-space over $k$.
    \item $\Proj^n_k$: the projective $n$-space over $k$.
    \item $\Oh_X$: the structure sheaf of $X$.
    \item $V \to X$: a vector bundle over $X$.
    \item $\Gamma(X,V)$: the set of global sections of $V \to X$.
    \item $k(X)$: the function field of $X$.
    \item $k(x)$: the residue field of a closed point $x \in X$.
    \item $\Proj^n_k / \Proj^{n-1}_k$: the $\A^1$-homotopic $n$-sphere.
    \item $\EAdeg$: the global Euler-$\A^1$ degree of a finite morphism of smooth projective curves $f: C \to \Proj^1_k$ induced from the Euler number of the pullback of the line bundle $f^* \Oh_{\Proj^1}(1)$. 
    \item $\Adeg$: the $\A^1$-Brouwer degree of $f: \Proj^n_k / \Proj^{n-1}_k \to \Proj^n_k / \Proj^{n-1}_k$. Also denotes the global $\A^1$ degree of a morphism $f: X \to Y$ of smooth proper $k$-schemes when applicable, see Definition \ref{sum_local_A1}.
    \item $[X, Y]$: the set of $\A^1$-homotopy classes of morphisms of smooth schemes $f:X \to Y$ over $k$.
    \item $GW(k)$: the Grothendieck-Witt ring of $k$, see Definition \ref{def:grothendieck_witt_ring}.
    \item $\Tr_{L/k}$: the trace operator $\Tr_{L/k}: \GW(L) \to \GW(k)$ given a separable field extension $L/k$.
    \item $\widetilde{CH}^i(X)$: the $i$-th oriented Chow-Witt ring of a smooth scheme $X$.
    \item $\langle a \rangle$: an element of $GW(k)$ for some $a \in k^\times$.
    \item $e(V,f)$: the Euler number of a vector bundle $V \to X$ given a global section $f \in \Gamma(V,X)$, see Definition \ref{def:eulernumbersection}
    \item $e(\pi, q)$: the Euler number $e(\pi^* \Oh_{\Proj^1_k} (\deg F_q), F_q)$, where $\pi$ is a finite morphism of smooth projective curves $\pi: C \to \Proj^1_k$, $q \in \Proj^1_k$ a closed point, and $F_q$ the monic minimal polynomial of $q$ over $k$, see Definition \ref{def:eulernumber}
    \item $X_{k(p)}$: the base change of a $k$-scheme $X$ to the residue field $k(p)$ of a point $p \in X$.
    \item $p_{k(p)}$: the canonical point of $X_{k(p)}$ determined by $p: \operatorname{Spec} k(p) \to X$ where $X$ is a $k$-scheme.
    \item $N$: the level of modular curves.
    \item $\omega_{\text{odd}}(N)$: the number of distinct odd prime factors of $N$.
    \item $X(N), X_0(N), X_1(N)$: modular curves of level $N$ considered as smooth projective schemes over $\Z[1/N]$.
    \item $Y(N), Y_0(N), Y_1(N)$: affine modular curves of level $N$ considered as smooth affine schemes over $\Z[1/N]$.
    \item $E$: an elliptic curve over $k$.
    \item $j$: the classical $j$-invariant of an elliptic curve $E$.
\end{itemize}

We define these notations more precisely in the upcoming sections.

\section{\texorpdfstring{$\A^1$}{A1} homotopy theory} \label{sec: A1 homotopy theory}

We introduce $\A^1$ homotopy theory by listing some essential definitions and results. The concepts that we list in this section follow \cite{AWS}, \cite{kwcubic}, and \cite{kwBezout}, and lead up to Euler numbers, see Definition \ref{def:eulernumbersection}. Euler numbers are sums of local indices. We think of Euler numbers as analogous to global $\A^1$-degrees and local indices as analogous to local $\A^1$-degrees, cf. Example \ref{ex:euler_number_A1_degree}.

\subsection{\texorpdfstring{$\A^1$}{A1} degrees of finite morphisms over projective spaces}\label{subsec: A1 degree basic}
We first introduce the construction of $\A^1$ degrees of finite self-morphisms of $\mathbb{A}^n$ or $\mathbb{P}^n$. This construction generalizes that of the topological Brouwer degree of continuous maps between manifolds.

\begin{definition}
Let $X$ be a scheme over $k$. A Nisnevich neighborhood of $x \in X$ is an \'etale morphism $p: U \rightarrow X$ such that there is some $u \in U$ such that $p(u) = x$ and the induced map $k(x) \rightarrow k(u)$ of residue fields is an isomorphism.
\end{definition}

Nisnevich sites on categories of smooth schemes over $k$ are finer than Zariski sites but coarser than \'etale sites.

\begin{definition}
Let $X$ be a scheme of dimension $n$. An \'etale map $\phi: U \rightarrow \A^n$, where $U$ is a Zariski open neighborhood of $p \in X$, is referred to as Nisnevich local coordinates around $p$ if the induced map of residue fields $k(\phi(p)) \rightarrow k(p)$ is an isomorphism.
\end{definition}

\begin{proposition}\label{curvenis}
Let $X$ be a smooth scheme over $k$ of dimension $n \geq 1$. There are Nisnevich coordinates near any closed point.
\end{proposition}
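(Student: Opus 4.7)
The plan is to use smoothness at $p$ to build Nisnevich coordinates explicitly by choosing $n$ regular functions $f_1, \ldots, f_n$ on a Zariski neighborhood of $p$ with prescribed values and differentials.

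First I would observe that smoothness of $X$ over $k$ forces $\kappa := k(p)$ to be a finite separable extension of $k$ and makes $\dim_\kappa \mathfrak{m}_p/\mathfrak{m}_p^2 = n$. Separability of $\kappa/k$ gives $\Omega^1_{\kappa/k} = 0$, so the standard exact sequence
\[
    \mathfrak{m}_p/\mathfrak{m}_p^2 \to \Omega^1_{X/k}|_p \to \Omega^1_{\kappa/k} \to 0
\]
upgrades to an isomorphism $\mathfrak{m}_p/\mathfrak{m}_p^2 \xrightarrow{\sim} \Omega^1_{X/k}|_p$ by dimension count. Choose a primitive element $\alpha \in \kappa$ with (separable) minimal polynomial $F(T) \in k[T]$.

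Since $\Oh_{X,p} \twoheadrightarrow \kappa$ is surjective, lift $\alpha$ to some $g \in \Oh_{X,p}$. If $dg|_p = 0$, replace $g$ by $g + h$ for some $h \in \mathfrak{m}_p \setminus \mathfrak{m}_p^2$: this does not change the value $g(p) = \alpha$ but ensures $dg|_p \neq 0$ via the identification above. Set $f_1 := g$, and complete $df_1|_p$ to a $\kappa$-basis $df_1|_p, df_2|_p, \ldots, df_n|_p$ of $\Omega^1_{X/k}|_p$ by selecting $f_2, \ldots, f_n \in \mathfrak{m}_p$ whose classes in $\mathfrak{m}_p/\mathfrak{m}_p^2$ fill out the basis. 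All these functions are simultaneously defined on some common Zariski open $U \ni p$.

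Now form $\phi := (f_1, \ldots, f_n) : U \to \A^n_k$. Since $f_1(p) = \alpha$ is a primitive element, the residue field of $\phi(p)$ equals $k(\alpha) = \kappa$, so the induced residue field extension at $p$ is the identity; meanwhile the cotangent map $\phi^*\Omega^1_{\A^n_k/k}|_{\phi(p)} \to \Omega^1_{X/k}|_p$ sends $dt_i$ to $df_i|_p$ and is an isomorphism by construction. Combined with the identity residue field extension, this forces $\phi$ to be unramified and in fact \'etale at $p$; openness of the \'etale locus then yields a possibly smaller Zariski neighborhood on which $\phi$ remains \'etale, producing the required Nisnevich local coordinates. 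The main technical point I anticipate is passing from an isomorphism on K\"ahler differentials to genuine \'etaleness, i.e.\ getting flatness for free. This I would justify either by miracle flatness applied to the induced map of regular local rings of equal dimension $n$, or equivalently by Cohen's structure theorem, which identifies both completions with $\kappa[[t_1, \ldots, t_n]]$ in a way that makes the induced map between them manifestly an isomorphism.
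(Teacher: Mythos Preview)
Your argument is correct and self-contained. The paper, by contrast, does not prove this proposition at all: its ``proof'' consists solely of a reference to \cite[Proposition 20]{kwcubic}. So there is nothing to compare at the level of mathematical strategy---you have supplied a genuine proof where the paper only cites one.

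For what it is worth, your approach is the standard one and is essentially what one finds in the cited reference: produce a primitive element for the (separable) residue field extension, lift it to a regular function with nonvanishing differential (this is exactly where $n \geq 1$ enters, since you need $\mathfrak{m}_p/\mathfrak{m}_p^2 \neq 0$ to make the adjustment $g \mapsto g + h$), complete to a system of parameters, and invoke the Jacobian/miracle-flatness criterion for \'etaleness between smooth schemes of equal dimension. One small stylistic remark: the cleanest way to finish is to quote directly the criterion that a morphism between smooth $k$-schemes of the same dimension is \'etale at $p$ if and only if the induced map on cotangent spaces $\phi^*\Omega^1_{\A^n/k}|_{\phi(p)} \to \Omega^1_{X/k}|_p$ is an isomorphism; this packages both the unramifiedness and the flatness (via miracle flatness, as you note) into a single step and avoids having to unwind the identification $\mathfrak{m}/\mathfrak{m}^2 \cong \Omega^1|_p$ on both sides.
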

\begin{proof}
We refer to \cite[Proposition 20]{kwcubic}.
\end{proof}

\begin{definition} \label{def:grothendieck_witt_ring}
Let $k$ be a field. The isometry classes of non-degenerate, symmetric, bilinear forms over $k$ form a semi-ring under $\oplus$, the direct sum operator, and $\otimes$, the tensor product operator. The Grothendieck-Witt group $\GW(k)$ is the group completion of this semi-ring under $\oplus$.
The Grothendieck-Witt group is generated by elements of the form $\langle a \rangle$, where $\langle a \rangle$ represents the isometry class of the bilinear form $k \times k \rightarrow k, (x,y) \mapsto axy$. The generators are subject to the following relations:
\begin{enumerate}
    \item $\langle a \rangle = \langle b^2 a \rangle$ for $b \in k^\times$
    \item $\langle a \rangle \langle b \rangle = \langle ab \rangle$
    \item $\langle u \rangle + \langle v \rangle = \langle uv(u+v) \rangle + \langle u+v \rangle$
    \item $\langle u \rangle + \langle -u \rangle = \langle 1 \rangle + \langle -1 \rangle$.
\end{enumerate}
\end{definition}

In fact, global $\A^1$ degrees of morphisms $f: \Proj^n_k \to \Proj^n_k$ have been carefully studied, for instance by Morel \cite{morel06}, Kass and Wickelgren \cite{kwEKL}, and Cazanave \cite{cazanave}. We first state the definition of the trace operator on Grothendieck-Witt rings, whose construction we will use to state a proposition which claims that the global $\A^1$ degrees of such morphisms is the sum of local $\A^1$ degrees at the fibers.

\begin{definition} \label{def:trace}
Let $L/k$ be a finite separable field extension with a trace functor $\Tr_{L/k}: L \rightarrow k$. Define the trace operator $\Tr_{L/k}: \GW(L) \rightarrow \GW(k)$ by $[\beta] \mapsto [\Tr_{L/k} \circ \beta]$ where $\beta: V \times V \rightarrow L$ is a non-degenerate symmetric bilinear form.
\end{definition}

In particular, Kass and Wickelgren prove that there exists an algebraic method to explicitly compute the local $\A^1$ degree at a fiber of $f: \A^n_k \to \A^n_k$. Here, we define the Eisenbud-Khimshiashvili-Levine class of a morphism $\A^n_k \rightarrow \A^n_k$.


\begin{definition} \label{def:ekl}
Let $R$ be the polynomial ring $k[x_1,\ldots,x_n]$. Let $f = (f_1,\ldots,f_n): \A^n_k \rightarrow \A^n_k$ be a finite $k$-morphism and let $x \in \A^n_k$ be a closed point such that $y = f(x)$ has residue field $k$. Express the coordinates of $y$ by $y = (\overline{b}_1,\ldots,\overline{b}_n)$. Furthermore, we denote by $\mathfrak{m}_x$ the maximal ideal of $R$ corresponding to $x$.

Define the local algebra $Q_x(f)$ of $f$ at $x$ to be the local ring $R_{\mathfrak{m}_x}/(f_1-\overline{b}_1,\ldots,f_n-\overline{b}_n)$. The local algebra at the origin is $Q_0(f)$. 

Let $a_{i,j} \in R$ be polynomials satisfying
$$
    f_i(x) = f_i(0) + \sum_{j=1}^n a_{i,j} x_j.
$$
The distinguished socle element is the element $E_0(f) = \det(a_{i,j})$ of $Q_0(f)$. 

Given any $k$-linear map $\phi: Q_0(f) \rightarrow k$ satisfying $\phi(E_0(f)) = 1$, define $\beta_\phi: Q_0(f) \times Q_0(f) \rightarrow k$ to be the symmetric bilinear form $\beta_\phi(a_1,a_2) = \phi(a_1 \cdot a_2)$. The Grothendieck-Witt class of Eisenbud-Khimshiashvili-Levine or the EKL class $w_0(f) \in \GW(k)$ is the Grothendieck-Witt class of $\beta_\phi$. By \cite[Lemma 6]{kwEKL}, this is independent of the choice of $\phi$.
\end{definition}

A crucial theorem of Kass and Wickelgren in \cite{kwEKL} states the following.
\begin{theorem}\label{localEKL}\cite[Main Theorem]{kwEKL}
Let $f: \A^n_k \to \A^n_k$ be a finite morphism of $k$-schemes. Then the local $\A^1$ degree of $f$ at a $k$-rational point $x \in \A^n_k(k)$ is equivalent to the EKL class $w_0(f)$.
\end{theorem}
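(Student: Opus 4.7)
The plan is to show that both the local $\A^1$-degree of $f$ at $x$ and the EKL class $w_0(f)$ compute the same symmetric bilinear pairing on the local algebra $Q_x(f)$, namely the canonical Scheja-Storch form of this zero-dimensional complete intersection.

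First, I would reduce to the case where $x$ is a $k$-rational point with $f(x) = 0$. A translation in the source and target handles the base point. When the residue field $k(x)$ is a nontrivial finite separable extension $L/k$, I would base change to $L$, apply the theorem there, and descend via the trace $\Tr_{L/k}\colon \GW(L) \to \GW(k)$ of Definition \ref{def:trace}; both the local $\A^1$-degree and the EKL class enjoy the expected transfer compatibility along finite separable extensions, so the rational case implies the general one.

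Next, I would identify both invariants with the Scheja-Storch form on $Q_0(f)$. For the EKL side this is essentially built into Definition \ref{def:ekl}: Scheja-Storch duality says that any $k$-linear functional $\phi\colon Q_0(f) \to k$ sending the distinguished socle element $E_0(f) = \det(a_{i,j})$ to $1$ induces the same non-degenerate symmetric form $\beta_\phi(a,b) = \phi(ab)$, independent of the presentation. For the local $\A^1$-degree side, I would use Proposition \ref{curvenis} to pick Nisnevich coordinates near $x$, pull $f$ back to a finite endomorphism of affine spaces over a Nisnevich neighborhood, and evaluate Morel's local $\A^1$-Brouwer degree on this model. Homotopy invariance of Morel's degree then allows deforming $f$ through a family of finite maps to a standard representative, for instance an étale linear map whose local $\A^1$-degree is visibly $\langle \det Df(x) \rangle$, which matches the Scheja-Storch form in that case by direct calculation.

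The main obstacle is constructing and controlling this deformation. Concretely, one needs an $\A^1$-family $F\colon \A^n_k \times \A^1_t \to \A^n_k$ with $F_0 = f$ that remains finite at the relevant point for all $t$ and whose fiberwise local algebras $Q_0(F_t)$ form a flat family over $\A^1_t$. Along such a family one must prove that the EKL class is constant (an algebraic check using flatness of $Q_0(F_t)$ together with independence of $\phi$) and that the local $\A^1$-degree is constant (invoking the Nisnevich-local $\A^1$-homotopy invariance of Morel's degree, for which the Nisnevich coordinates of Proposition \ref{curvenis} are essential). Once this deformation framework is established, matching both invariants on the étale standard model completes the proof.
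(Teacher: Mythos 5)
The paper does not actually prove this statement: Theorem \ref{localEKL} is quoted verbatim from Kass--Wickelgren, so your sketch has to be measured against their argument, whose broad outline (identify the EKL class with the Scheja--Storch form, deform to a split situation, compare there) you do reproduce. Two of your steps would fail as written, however. First, you cannot deform $f$ ``to an \'etale linear map whose local $\A^1$-degree is visibly $\langle \det Df(x) \rangle$'': the local $\A^1$ degree at $x$ is represented by a form of rank $\dim_k Q_x(f)$, and this rank is preserved along any flat family of the kind you describe, so a zero of multiplicity greater than one can never be connected to a single rank-one model. What actually happens in the Kass--Wickelgren proof is that the isolated zero is deformed so that it \emph{splits} into several simple zeros, in general with nontrivial separable residue fields, and one compares sums of trace forms $\sum_p \Tr_{k(p)/k}\langle \det Df(p)\rangle$; moreover, individual local degrees are not homotopy invariant --- only the sum of local degrees over a zero locus that remains finite over the parameter line is --- so one must control the entire family of zeros (no zeros escaping to infinity) and invoke ``global degree equals the sum of local degrees.'' Second, constancy of the EKL/Scheja--Storch class along the family is not just ``an algebraic check using flatness'': the family of forms is a symmetric bilinear form over $k[t]$, and concluding that its fibers agree in $\GW(k)$ requires Harder's theorem on bilinear forms over the affine line, a substantive input your sketch omits.

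There is also a gap in your opening reduction. The transfer compatibility you invoke --- that the local $\A^1$ degree at a non-rational point equals $\Tr_{L/k}$ of the local degree after base change --- is not a formal ``expected'' property and is not contained in the cited Kass--Wickelgren paper, whose Main Theorem concerns a zero with residue field $k$ (matching how the present paper uses it); for non-rational separable zeros this trace formula is itself a separate, later theorem, analogous to what Proposition \ref{local_index_base_change} records for local indices of Euler numbers. So either restrict the statement to $k$-rational zeros or import that transfer result explicitly as an additional ingredient. With those repairs --- splitting into simple zeros rather than a single \'etale model, Harder's theorem for constancy of the algebraic side, properness of the zero family for the homotopy-theoretic side, and an honest treatment of non-rational points --- your outline does track the actual proof.
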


Other than the EKL class, we can also use the B\'ezout degree to compute the global $\A^1$ degree of the morphism $f: \Proj^n_k \to \Proj^n_k$. In particular, \cite[Lemma 6]{kwBezout} equates the Grothendieck-Witt class of a certain type of nondegenerate symmetric bilinear form with the $\A^1$ degree of maps of Proposition \ref{adegpoly}.

\begin{lemma} \label{bezoutlemma}
Let $k$ be any field, and let $A_1, A_2, \cdots, A_n \in k$ be scalars such that $A_n$ is nonzero. Then the nondegenerate symmetric matrix 
\begin{equation*}
    M = \begin{pmatrix}
    A_1 & A_2 & \cdots & A_{n-1} & A_n \\
    A_2 & A_3 & \cdots & A_n & 0 \\
    \vdots & \vdots & \ddots & \vdots & \vdots \\
    A_{n-1} & A_n & \cdots & 0 & 0 \\
    A_n & 0 & \cdots & 0 & 0
    \end{pmatrix}
\end{equation*}
corresponds to the following element in $\GW(k)$:
\begin{equation*}
    M = \begin{cases}
    \langle A_n \rangle + \frac{n-1}{2} \left( \langle 1 \rangle + \langle -1 \rangle  \right) \; &\text{if } \; n \; \text{ odd} \\
    \frac{n}{2} \left( \langle 1 \rangle + \langle -1 \rangle \right) \; &\text{if } \; n \; \text{ even}.
    \end{cases}
\end{equation*}
\end{lemma}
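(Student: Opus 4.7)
The strategy is induction on $n$. At each step I would apply congruence transformations $M \mapsto P^T M P$ that exploit the extreme sparsity of row $n$ and column $n$ of $M$ — each has only the single nonzero entry $A_n$ at positions $(n,1)$ and $(1,n)$ respectively — to split off a hyperbolic plane, reducing the problem to the same Hankel pattern of size $n-2$.

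For the base cases, when $n = 1$ the matrix $(A_1) = (A_n)$ has class $\langle A_n \rangle$, matching the odd formula. When $n = 2$, the matrix $\begin{pmatrix} A_1 & A_2 \\ A_2 & 0 \end{pmatrix}$ admits $e_2$ as an isotropic vector with $A_2 \neq 0$, so (implicitly working in characteristic different from $2$, consistent with the presentation of $\GW(k)$ given above) it is a hyperbolic plane and represents $\langle 1 \rangle + \langle -1 \rangle$.

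For the inductive step, for each $j = 2, \ldots, n-1$ I would perform the symmetric congruence adding $c_j = -A_j/A_n$ times row $n$ to row $j$, together with the corresponding column operation. Because the only nonzero entry of row $n$ lies in column $1$ (and dually for column $n$), a single such step modifies only the $(j,1)$ and $(1,j)$ entries, clearing both, while leaving row $n$, column $n$, and the entire inner $(n-2) \times (n-2)$ block indexed by $2 \leq i, k \leq n-1$ intact; these ``untouched'' properties survive across all $n-2$ operations precisely because row $n$ and column $n$ themselves are never altered. After these congruences the subspaces $V_1 = \mathrm{span}(e_1, e_n)$ and $V_2 = \mathrm{span}(e_2, \ldots, e_{n-1})$ are mutually orthogonal, giving
\[
M \;\cong\; \begin{pmatrix} A_1 & A_n \\ A_n & 0 \end{pmatrix} \;\oplus\; M' \quad \text{in } \GW(k),
\]
where $M'$ is the Hankel matrix of the same shape of size $n-2$ with parameters $A_3, A_4, \ldots, A_n$, so that its last parameter is again $A_n$.

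By the $n = 2$ analysis the $2 \times 2$ summand contributes $\langle 1 \rangle + \langle -1 \rangle$, and the inductive hypothesis applied to $M'$ yields $\langle A_n \rangle + \tfrac{n-3}{2}(\langle 1 \rangle + \langle -1 \rangle)$ when $n$ is odd and $\tfrac{n-2}{2}(\langle 1 \rangle + \langle -1 \rangle)$ when $n$ is even; summing in $\GW(k)$ produces exactly the two cases of the statement. The main obstacle is the bookkeeping claim that the sequence of operations genuinely preserves row $n$, column $n$, and the inner block; this reduces to the observations that $M_{in} = 0$ for $i \geq 2$ and $M_{nk} = 0$ for $k \geq 2$, so each ``add a multiple of row $n$ to row $j$'' step only touches the column-$1$ entry of the target row, and dually for columns.
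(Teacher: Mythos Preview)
The paper does not supply its own proof of this lemma; it is stated as a known fact, attributed immediately before the statement to \cite[Lemma~6]{kwBezout}. Your argument is correct and is essentially the standard one: the induction via congruence transformations that use the single nonzero entry $A_n$ in row $n$ (and column $n$) to clear the entries $(j,1)$ and $(1,j)$ for $2 \le j \le n-1$ does exactly what you claim, because each such operation only touches column~$1$ (respectively row~$1$) of the target row (respectively column), leaving row~$n$, column~$n$, and the inner $(n-2)\times(n-2)$ block untouched. The resulting orthogonal splitting
\[
M \;\cong\; \begin{pmatrix} A_1 & A_n \\ A_n & 0 \end{pmatrix} \oplus M'
\]
with $M'$ the same Hankel pattern on parameters $A_3,\ldots,A_n$ then finishes by induction. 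Your caveat about characteristic~$2$ is appropriate: the identification of $\begin{pmatrix} * & A_n \\ A_n & 0 \end{pmatrix}$ with $\langle 1 \rangle + \langle -1 \rangle$ via diagonalization requires $\mathrm{char}\,k \neq 2$, which is consistent with the paper's standing hypotheses elsewhere (e.g.\ Section~\ref{sec: Relative orientability}) even though the lemma as stated says ``any field''.
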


\begin{definition}
The B\'ezout degree of a finite morphism $\frac{F}{G}: \Proj^1_k \to \Proj^1_k$ is represented by the matrix $\left( b_{i,j} \right)_{i,j}$ where $b_{i,j}$ are defined as follows:
\begin{equation*}
    F(x)G(y) - F(y)G(x) = (x-y) \left( \sum_{i,j} b_{i,j} x^{i-1} y^{j-1} \right).
\end{equation*}
We call the matrix $\left(b_{i,j} \right)_{i,j}$ the B\'ezout matrix of $\frac{F}{G}$.
\end{definition}

\begin{theorem} \label{thm:globalA1Bezout}
The global $\A^1$ degree of a finite morphism $\frac{F}{G}: \Proj^1_k \to \Proj^1_k$ is equivalent to the B\'ezout degree of $\frac{F}{G}$.
\end{theorem}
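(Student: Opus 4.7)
The plan is to establish the identification by computing both sides at a convenient $k$-rational regular value and matching the local contributions algebraically. By Theorem \ref{localEKL} combined with the trace operator of Definition \ref{def:trace}, the global $\A^1$-degree of $F/G$ decomposes as a sum (through traces along residue-field extensions) of local EKL classes at the closed points of a chosen fiber. I would aim to show these local classes reassemble into the symmetric bilinear form with matrix $(b_{i,j})$.

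First I would select the $k$-rational regular value $y = [0:1] \in \Proj^1_k$, so that the fiber of $F/G$ consists of the roots of $F$ and the relevant local algebras are quotients of $k[x]/(F)$. For each closed fiber point $x_i$, Kass and Wickelgren's formula (Theorem \ref{localEKL}) identifies the local $\A^1$-degree with the EKL class of $F/G$ at $x_i$; in the separable case this simplifies to the pushforward along $\Tr_{k(x_i)/k}$ of $\langle G(x_i)/F'(x_i) \rangle$, and in general to the trace form associated with the local algebra $Q_{x_i}(F/G)$ of Definition \ref{def:ekl}.

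Next I would recognize the B\'ezout matrix as presenting exactly this global trace form. The defining identity
\begin{equation*}
F(x)G(y) - F(y)G(x) = (x - y) \sum_{i,j} b_{i,j} x^{i-1} y^{j-1}
\end{equation*}
exhibits $(b_{i,j})$ as the classical B\'ezoutian of $(F, G)$, which by Scheja--Storch duality is an explicit matrix representative of the residue pairing on $k[x]/(F)$ weighted by $G$, namely the bilinear form $(a, b) \mapsto \Tr_{k[x]/(F)/k}\!\left(G \cdot ab / F'\right)$ read off on the monomial basis $\{1, x, \ldots, x^{\deg F - 1}\}$. Matching this presentation with the aggregated local contributions from the previous step then yields the claimed equality in $\GW(k)$; one can verify the identification directly at each root $x_k$ of $F$ via the specialization $\sum_{i,j} b_{i,j} x_k^{i-1} y^{j-1} = -F(y) G(x_k)/(x_k - y)$, which isolates the contribution of $x_k$ to the residue pairing.

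The main obstacle will be the case $\deg F \ne \deg G$: when the rational map has a pole or zero at $\infty$, one must change charts on $\Proj^1_k$ and reconcile the extra fiber contribution, and then ensure the monomial basis used for the B\'ezout matrix matches the basis implicit in the trace-form calculation, with careful bookkeeping of signs. This entire identification is essentially Cazanave's theorem \cite{cazanave} recast in the Kass--Wickelgren framework, so in lieu of a ground-up verification one may cite it to conclude.
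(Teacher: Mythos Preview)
The paper does not prove this statement at all: its entire proof is a bare citation to \cite[Theorem 3]{kwBezout} and \cite[Theorem 1.2]{cazanave}. Your final sentence --- that one may simply cite Cazanave's theorem --- is therefore exactly what the paper does, and everything preceding it in your proposal is surplus to what the paper requires.

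That said, your sketch is a reasonable outline of how those cited results actually go: the identification of the B\'ezoutian with the Scheja--Storch residue pairing on $k[x]/(F)$ is precisely the mechanism in \cite{kwBezout}, and the sum-of-local-EKL-classes decomposition is the content of \cite[Proposition 14]{kwEKL}. One caution if you were to carry it out in full: you call $[0:1]$ a ``regular value,'' but the EKL/Scheja--Storch machinery does not need the fiber to be reduced, so you should drop that word and simply work with the global algebra $k[x]/(F)$ directly rather than splitting into points first; the local-to-global matching you describe is then automatic from the Chinese remainder decomposition of that algebra. Your flagged obstacle about $\deg F \neq \deg G$ and the point at infinity is genuine and is handled in \cite{cazanave} by normalizing to the monic pointed case, which is why the paper is content to cite rather than reprove.
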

\begin{proof}
We refer to \cite[Theorem 3]{kwBezout} and \cite[Theorem 1.2]{cazanave}.
\end{proof}
We will use the theorem when we compute explicit examples of global $\A^1$ degrees of covering maps of modular curves equivalent to $\mathbb{P}^1_k$.

We also state the following result from \cite[Lemma 5]{kwBezout}, which computes the $\A^1$ degree of maps $\frac{x^n}{c}: \mathbb{P}^1_k \rightarrow \mathbb{P}^1_k$.
\begin{proposition}\label{adegpoly} \cite[Lemma 5]{kwBezout}
For $c \in k^*$, 
\begin{equation*}
    \deg^{\A^1} \left( \frac{x^n}{c} \right) = \begin{cases} \langle c \rangle + \frac{n-1}{2} \cdot (\langle 1 \rangle + \langle -1 \rangle) &\text{if } n \text{ is odd} \\ \frac{n}{2} \cdot (\langle 1 \rangle + \langle - 1 \rangle) & \text{if } n \text{ is even}  \end{cases}
\end{equation*}
\end{proposition}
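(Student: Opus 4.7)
The plan is to leverage Theorem \ref{thm:globalA1Bezout}, which identifies the global $\A^1$ degree of a rational map $\Proj^1_k \to \Proj^1_k$ with its Bézout degree, and then reduce to Lemma \ref{bezoutlemma}, which already computes the Grothendieck-Witt class of the anti-diagonal matrix that will arise from the map $\frac{x^n}{c}$.

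Concretely, I would write $\frac{x^n}{c} = \frac{F(x)}{G(x)}$ with $F(x) = x^n$ and $G(x) = c$, and expand
\[ F(x)G(y) - F(y)G(x) \;=\; c(x^n - y^n) \;=\; c(x - y)\sum_{k=0}^{n-1} x^{n-1-k} y^k. \]
Comparing with the definition of the Bézout matrix $(b_{i,j})$, one reads off $b_{i,j} = c$ precisely when $i + j = n + 1$ and $b_{i,j} = 0$ otherwise, so the Bézout matrix is the $n \times n$ matrix that is anti-diagonal with every nonzero entry equal to $c$.

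This is precisely the matrix $M$ appearing in Lemma \ref{bezoutlemma} with $A_1 = A_2 = \cdots = A_{n-1} = 0$ and $A_n = c$; the nondegeneracy hypothesis $A_n \neq 0$ holds because $c \in k^\times$. Applying the lemma directly yields both cases of the proposition verbatim. There is no serious obstacle here: the entire argument is a direct substitution, and the only place to exercise care is in matching the indexing conventions between the definition of the Bézout matrix and the matrix $M$ of Lemma \ref{bezoutlemma}.
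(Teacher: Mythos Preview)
Your argument is correct and is exactly the intended one: the paper does not spell out a proof but explicitly notes that Lemma~\ref{bezoutlemma} (which is \cite[Lemma 6]{kwBezout}) computes the Grothendieck--Witt class arising from Proposition~\ref{adegpoly}, so combining the B\'ezout matrix of $x^n/c$ with Lemma~\ref{bezoutlemma} via Theorem~\ref{thm:globalA1Bezout} is precisely the route the paper has in mind.
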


We end the subsection with the following proposition, which simplifies the computation of local $\A^1$ degree of a morphism $f$ at a fiber with double ramification.
\begin{lemma}\label{TraceDoubleRamify}
Let $k$ be any field of characteristic coprime to $2$. Let $L$ be any finite separable field extension over $k$. Then the trace of the hyperbolic element $\langle 1 \rangle + \langle -1 \rangle$ is given by
\begin{equation*}
    \Tr_{L/k} (\langle 1 \rangle + \langle -1 \rangle) = [L:k] \left( \langle 1 \rangle + \langle -1 \rangle \right).
\end{equation*}
\end{lemma}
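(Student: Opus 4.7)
The plan is to exhibit $\Tr_{L/k}(\langle 1 \rangle + \langle -1 \rangle)$ as a hyperbolic form over $k$ by transporting a Lagrangian from $L$ to $k$. Recall that $\langle 1 \rangle + \langle -1 \rangle \in \GW(L)$ is represented by the rank-$2$ hyperbolic form $H_L$ on $V = L \cdot e_1 \oplus L \cdot e_2$ with matrix $\mathrm{diag}(1,-1)$. After the change of basis $u = e_1 + e_2$, $v = \tfrac{1}{2}(e_1 - e_2)$ (valid since $\mathrm{char}(k) \neq 2$), the form becomes $\begin{pmatrix} 0 & 1 \\ 1 & 0 \end{pmatrix}$, so the line $L \cdot u \subset V$ is a Lagrangian, i.e.\ a totally isotropic subspace of dimension equal to half of $\dim_L V$.

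Next I would push this structure down to $k$ via the trace. Viewing $V$ as a $k$-vector space of dimension $2[L:k]$, the bilinear form $\Tr_{L/k} \circ \beta : V \times V \to k$ is symmetric, and it is nondegenerate because the extension $L/k$ is separable, so the trace pairing $L \times L \to k$, $(x,y) \mapsto \Tr_{L/k}(xy)$, is nondegenerate (this is the classical criterion for separability). Hence $\Tr_{L/k}(H_L)$ defines a genuine element of $\GW(k)$, and by Definition \ref{def:trace} this is precisely $\Tr_{L/k}(\langle 1 \rangle + \langle -1 \rangle)$.

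The key observation is then that the $k$-subspace $W := L \cdot u \subset V$ remains totally isotropic for $\Tr_{L/k} \circ \beta$: for any $a, b \in L$, $\beta(au, bu) = 0$ in $L$, hence $\Tr_{L/k}(\beta(au, bu)) = 0$ in $k$. Since $\dim_k W = [L:k]$ while $\dim_k V = 2[L:k]$, the subspace $W$ is a Lagrangian of $(V, \Tr_{L/k} \circ \beta)$ over $k$. It is a standard fact that a nondegenerate symmetric bilinear form of even rank $2m$ that admits a Lagrangian is isometric to $m$ copies of the hyperbolic plane (apply relation (4) together with the Witt decomposition, or argue directly by induction by splitting off hyperbolic planes paired with a dual basis to a basis of $W$). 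Applying this with $m = [L:k]$ yields
\begin{equation*}
    \Tr_{L/k}(\langle 1 \rangle + \langle -1 \rangle) = [L:k]\bigl(\langle 1 \rangle + \langle -1 \rangle\bigr)
\end{equation*}
in $\GW(k)$, as desired.

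The only genuinely nontrivial input is separability, which enters twice: once to know that $\Tr_{L/k}$ defines a nondegenerate form on $V$ (so that we land in $\GW(k)$ at all, matching Definition \ref{def:trace}), and implicitly to invoke the Witt-cancellation style statement that a nondegenerate form with a half-dimensional isotropic subspace is hyperbolic. Both are standard, so I do not expect serious obstacles; the proof is really just the observation that the trace of a hyperbolic form is hyperbolic, made quantitative by a dimension count.
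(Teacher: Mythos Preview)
Your proof is correct and follows essentially the same approach as the paper: both arguments realize $\langle 1 \rangle + \langle -1 \rangle$ on a two-dimensional $L$-space, identify an isotropic $L$-line, and observe that this line becomes a $k$-Lagrangian of dimension $[L:k]$ in the $2[L:k]$-dimensional traced form, forcing hyperbolicity. The paper's version uses the specific model $L[x]/(x-\alpha)^2$ (with $\alpha$ a primitive element and the socle line $L\cdot(x-\alpha)$ playing the role of your $L\cdot u$), whereas your coordinate presentation on $L e_1 \oplus L e_2$ is more transparent and avoids the incidental bookkeeping of that model.
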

\begin{proof}
Let $\alpha$ be a primitive element of $L$. Let $f(x)$ be the monic minimal polynomial of $\alpha$. Pick $\eta$, a bilinear form over $L[x]/(x-\alpha)^2$ such that $\eta(x) = 1$ and $\eta(1) = 0$. Then $\eta$ induces a non-degenerate quadratic form $\beta$ over $L$, considered as a $k$-vector space. The equation for $\beta$ is given by
\begin{equation*}
    \beta(a, b) := \Tr_{L/k}(\eta(ab)).
\end{equation*}
We assume that the basis of $L$, as a $k$-vector space, is the set of monomials $\{1, x, \cdots, x^{[L:k]}\}$. It is clear to check that for any polynomial $g(x) \in L$,
\begin{equation*}
    \beta(g(x), g(x)) = \Tr_{L/k} ( \eta(g(x)^2))
\end{equation*}
This implies that any polynomial of form $g(x) = f(x)h(x)$ where $(f, h) = 1$ is an isotropic vector of $\beta$. Using this observation, we can explicitly construct an orthogonal basis $B$ of a maximal totally isotropic subspace of dimension $\frac{[L:k]}{2}$:
\begin{equation*}
    B := \{f(x), x f(x), x^2 f(x), \cdots, x^{n-1} f(x)\}.
\end{equation*}
We can hence conclude that $\beta$ is hyperbolic. 
\end{proof}

\subsection{Euler numbers of vector bundles over smooth schemes}\label{subsec: Euler numbers}

 We present an exposition on Euler numbers of vector bundles over smooth schemes, constructed by Kass and Wickelgren \cite{kwcubic}. The terminologies presented in this subsection closely follow those in \cite[Section 4]{kwcubic} and \cite[Section 4]{AWS}.
 
For this subsection, let $X/k$ be a smooth dimension $n$ scheme over a field $k$ unless otherwise specified. Furthermore, let any vector bundles $V$ on $X$ be of rank $n$.

Following \cite{okotel14}, \cite{kwcubic} defines relative orientations of lines bundles. Moreover, \cite{kass2023quadraticallyenrichedcountrational} defines relative orientations of lci morphisms between schemes.

\begin{definition}{(cf. \cite[Definition 17]{kwcubic}, \cite[Definition 2.2]{kass2023quadraticallyenrichedcountrational})} \label{def:relative_orientation}
Let $X/k$ be a smooth $n$-dimensional scheme over a field $k$ and let $V$ be a vector bundle on $X$ of rank $n$. We say that $V$ is orientable if there is some line bundle $L$ on $X$ and some isomorphism $\det V := \wedge^n V \cong L^{\otimes 2}$. An orientation of $V$ consists of such an $L$ and such an isomorphism. We say that $V$ is oriented if an orientation of $V$ is fixed.

We say that $V$ is relatively orientable there is some line bundle $L$ on $X$ and some isomorphism $\Hom(\wedge^{n} TX, \wedge^{n} V) \cong L^{\otimes 2}$ where $TX$ is the tangent bundle of $X$. A relative orientation of $V$ consists of such an $L$ and such an isomorphism, and we say that $V$ is relatively oriented if a relative orientation of $V$ is fixed.

Given a local complete intersection morphism $f: X \to Y$, let $L_f$ be its cotangent complex, which is perfect \cite[Tag 08SL]{stacks-project}. Let $\omega_f$ denote $\det L_F$, the determinant of $L_f$, which is a line bundle on $X$. We say that $f$ is relatively orientable if there exists a line bundle $L$ on $X$ and an isomorphism $\omega_f \cong L^{\otimes 2}$. A relative orientation consists of such an $L$ and such an isomorphism, and we say that $f$ is relatively oriented if a relative orientation is fixed. 
\end{definition}

\begin{remark}
    Any locally of finite type morphism between regular schemes is a local complete intersection morphism \cite[Tag 0E9K]{stacks-project}. The lecture notes \cite{PW21} discussed global $\mathbb{A}^1$-degrees for relatively oriented morphisms $f: X \to Y$ of smooth (proper) $k$-schemes before \cite{kass2023quadraticallyenrichedcountrational} was available in the literature --- in this case, $f$ is relatively orientable exactly when $\operatorname{Hom}(\det TX, f^* \det TY)$ is isomorphic to the square of a line bundle, cf. \cite[Definition 8.1]{PW21}.
\end{remark}

Pick Nisnevich local coordinates $\phi: U \to \A^n_k$ at a closed point $p \in X$. Given a relatively oriented vector bundle $V \to X$, we can choose a sufficiently small open $U$ such that $V$ is trivial when restricted to $U$. 
\begin{definition} \cite[Definition 21]{kwcubic}
Let $V \to X$ be a vector bundle with a relative orientation $L^{\otimes 2} \cong \text{Hom} \left( \bigwedge\nolimits^n \text{T}X, \bigwedge\nolimits^n V \right)$. A trivialization of $V|_U$ is said to be compatible with the Nisnevich coordinates $\phi$ and the relative orientation if there exists an element $f \in \Gamma(U,L)$ such that $f^{\otimes 2} \in \Gamma(U, L^{\otimes 2})$ sends a distinguished basis of $\bigwedge\nolimits^n \text{T}X|_U$ to a distinguished basis of $\bigwedge\nolimits^n V|_U$. 
\end{definition}

Given a choice of a Nisnevich local coordinate $\phi: U \to \A^n_k$ and a compatible trivialization of $V|_U$, Kass and Wickelgren define the Euler number of a vector bundle given a choice of a global section. 

\begin{definition} \cite[Definition 22]{kwcubic}
Let $V \to X$ be a vector bundle of rank $n = \dim X$, and let $f \in \Gamma(X, V)$ be a global section. Let $Z \subset X$ be the closed subscheme defined as the zero set of $f$. Then a closed point $p \in X$ is an isolated zero of $f$ if $\Oh_{Z,p}$ is a finite $k$-algebra. We say that $f$ has isolated zeros if $\Oh_Z$ is a finite $k$-algebra.
\end{definition}

In \cite[Proposition 23]{kwcubic}, Kass and Wickelgren gives a criterion of whether a global section $f$ has isolated zeros, which we state as below.

\begin{proposition} \label{isozero} \cite[Proposition 23]{kwcubic}
A global section $f \in \Gamma(X,V)$ of a vector bundle $V \to X$ of rank $\dim X$ has isolated zeros if and only if $Z$, the closed subscheme of $X$ defined as the zero set of $f$, consists of finitely many points.
\end{proposition}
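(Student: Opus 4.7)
The plan is to translate the definition of isolated zeros into a statement about the Krull dimension of the zero scheme $Z$ and then exploit the hypothesis that the rank of $V$ equals $\dim X$. For the forward direction, by the definition preceding the proposition, $f$ has isolated zeros exactly when $\Gamma(Z, \Oh_Z)$ is a finite $k$-algebra, and a finite $k$-algebra has only finitely many prime ideals (all of them maximal), so $Z$ has only finitely many closed points.

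For the converse I would work Zariski-locally. Choose a trivializing open $U \subseteq X$ for $V$ so that $f|_U$ corresponds to a tuple $(f_1, \ldots, f_n)$ of regular functions on $U$; since the rank of $V$ equals $n = \dim X$, the stalk $\Oh_{Z,p}$ at $p \in Z \cap U$ is the quotient of the $n$-dimensional regular local ring $\Oh_{X,p}$ by an ideal generated by $n$ elements. If $Z$ consists of only finitely many points, every $p \in Z$ is simultaneously a generic point of its irreducible component and a closed point, so $\dim \Oh_{Z,p} = 0$, which makes the Noetherian local ring $\Oh_{Z,p}$ Artinian. Because $X$ is of finite type over $k$, the residue field $k(p)$ is a finite extension of $k$, so $\Oh_{Z,p}$ is a finite $k$-algebra. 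Since $Z$ is a zero-dimensional Noetherian scheme with finitely many points, it decomposes as the disjoint union $\bigsqcup_{p \in Z} \Spec \Oh_{Z,p}$, whence $\Gamma(Z, \Oh_Z) = \prod_{p \in Z} \Oh_{Z,p}$ is a finite $k$-algebra.

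The main obstacle is the converse: I must argue that topological isolation of each $p \in Z$ forces the local ring $\Oh_{Z,p}$ to be Artinian, and this uses crucially that $V$ has rank equal to $\dim X$ so that $Z$ is locally cut out by exactly $\dim X$ equations in a regular local ring of dimension $\dim X$. Without the rank-equals-dimension hypothesis, $Z$ could be positive-dimensional while still having few closed points, and the equivalence would fail.
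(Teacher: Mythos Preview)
The paper does not prove this proposition; it merely cites \cite[Proposition 23]{kwcubic}. Your argument is essentially correct and is the standard one: the forward implication is immediate, and for the converse you correctly reduce to showing that each $\Oh_{Z,p}$ is Artinian with residue field finite over $k$, then assemble $\Gamma(Z,\Oh_Z) \cong \prod_{p \in Z} \Oh_{Z,p}$.

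Your final paragraph, however, contains a genuine misconception. You assert that the hypothesis $\operatorname{rank} V = \dim X$ is ``crucial'' and that without it ``$Z$ could be positive-dimensional while still having few closed points.'' This is false: $Z$ is a closed subscheme of the finite-type $k$-scheme $X$, hence is itself of finite type over $k$ and therefore Jacobson; any such scheme with only finitely many closed points is zero-dimensional, regardless of how it is cut out. Indeed, your own second paragraph never actually uses the rank hypothesis---you record that $\Oh_{Z,p}$ is a quotient of an $n$-dimensional regular local ring by $n$ elements, but this observation plays no role in the subsequent steps (dimension zero $\Rightarrow$ Artinian $\Rightarrow$ finite over $k$). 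The rank condition is present in the ambient setup because it is needed to define local indices and Euler numbers, not for this particular equivalence. Relatedly, the step ``finitely many points $\Rightarrow$ every $p \in Z$ is both generic and closed'' should explicitly invoke that $Z$ is of finite type over $k$; as a bare topological claim it fails, for instance for the spectrum of a discrete valuation ring.
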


Given a global section $f \in \Gamma(X,V)$ with isolated zeros, we define the local index of $f$ at each isolated zero $p$ defined over the field $k$.

\begin{definition} \cite[Definition 30]{kwcubic} \label{def:localind}
    Let $V \to X$ be a relatively oriented vector bundle of rank $\dim X$. Let $f \in \Gamma(X,V)$ be a section. Let $p \in Z$ be any isolated zero of $f$. Choose Nisnevich coordinates $\phi: U \to \mathbb{A}^r_k$ around $p$. Choose a trivialization of $V|_U$ that is compatible with $\phi$ and the relative orientation.
    
    There is a presentation $\mathcal{O}_{Z,p} \cong k[x_1,\ldots,x_r]/\langle g_1,\ldots,g_r \rangle$ yielding a canonical isomorphism $\operatorname{Hom}_k(\mathscr{O}_{Z,p}, k) \cong \mathscr{O}_{Z,p}$ cf. \cite[Section 4, the discussion after Lemma 24 up to Lemma 28]{kwcubic}. Let $\eta$ be the element of $\operatorname{Hom}_k(\mathscr{O}_{Z,p}, k)$ which corresponds to $1$ in $\mathscr{O}_{Z,p}$. Let $\beta$ be the symmetric bilinear form defined by 
    
    $$\beta(x,y) = \eta(xy)$$
    
    The local index $\ind_p f$ of $f$ at $p$ is the equivalence class in the Grothendieck-Witt ring represented by the symmetric bilinear form $\beta$.
    
    The index $\ind_p f$ does not depend on the choice of $\phi$, the compatible trivialization, and the $g_i$ \cite[Corollary 31]{kwcubic}. Moreover, if $p$ is a $k$-point, then one may replace $\eta$ by any $k$-linear homomorphism $\eta_{\text{new}}: \mathscr{O}_{Z,p} \to k$ which takes the distinguished socle element to $1$h \cite[Section 4, the discussion after Corollary 17]{kwcubic}.
    
\end{definition}

As proved in \cite[Section 4]{kwcubic}, the local ring $\Oh_{Z,p}$ is of finite complete intersection, hence its socle is a dimension $1$ vector space over $k$. This allows us to find the distinguished socle element in $\Oh_{Z,p}$. We also note that $\ind_p f$ which represents $\beta$ is independent of the choice of $\eta$, the choice of Nisnevich coordinates of $X$, and the choice of a compatible trivialization of the vector bundle $E$, see \cite[Corollary 31]{kwcubic} for further details.

The above definition, which holds for any $k$-rational points of isolated zeros of $f$, can be extended to any finite separable field extension $L$ over $k$, as shown in \cite[Proposition 34]{kwcubic}.

\begin{proposition} \cite[Proposition 34]{kwcubic} \label{local_index_base_change}
Let $V$ be a relatively oriented vector bundle of rank $\dim X$ over $X$. Let $f \in \Gamma(X,V)$ be a section of $V$. Let $p$ be an isolated zero of $f$ whose residue field $k(p)$ is separable over $k$.

Then,
\begin{equation*}
    \ind_p f = \Tr_{k(p)/k} (\ind_{p_{k(p)}} f_{k(p)})
\end{equation*}
where $f_{k(p)}$ denotes the base change of $f$ to $k(p)$ and $p_{k(p)} \hookrightarrow X_{k(p)}$ denotes the canonical point of $X_{k(p)}$ determined by $p: \operatorname{Spec} k(p) \to X$.
\end{proposition}

Using local indices, we define the Euler number of a global section $f \in \Gamma(X,V)$.

\begin{definition} \cite[Definition 35]{kwcubic} \label{def:eulernumbersection}
Let $V \to X$ be a relatively orientable vector bundle with a fixed relative orientation $L^{\otimes 2} \cong \operatorname{Hom}(\wedge^n TX, \wedge^n V)$, and let $f \in \Gamma(X, V)$ be a section of $V$ with isolated zeroes whose residue fields are separable over $k$. Then the Euler number of $V$ given a global section $f$ is defined as 
\begin{equation*}
    e(V, f) := \sum_{p \in Z} \ind_p f
\end{equation*}
where $Z$ is the zero locus of $f$.
\end{definition}

\begin{remark}
    The definition of $e(V,f)$ depends on the relative orientation even though we suppress this relative orientation from the notation.
\end{remark}
 
Under certain conditions, the Euler number is independent of the choice of a global section of $V$. Section \ref{sec: Global A1 degrees} further discusses the invariance of choice of global sections. 

\begin{example} \label{ex:euler_number_A1_degree}
When $X = \A^n_k$ and $V = \Oh^n_X$, the Euler number of $V$ given a global section $f \in \Gamma(X, V)$ is equivalent to the global $\A^1$ degree of $f: \A^n_k \to \A^n_k$. The local indices of the Euler number is equal to the local $\A^1$ degrees of $f$, which are determined by the EKL class of $f$ at each fiber of $0 \in \A^n_k$ by Theorem \ref{localEKL}. Further details can be found in \cite[Example 32]{kwcubic} and \cite[Main Theorem]{kwEKL}.
\end{example}

\section{Two variants of global \texorpdfstring{$\A^1$}{A1} degrees of finite maps into the projective line}
\label{sec: Global A1 degrees}

In this section, we define some notions of Euler numbers and global Euler $\A^1$-degrees for some finite maps of schemes. Section \ref{sec: Relative orientability} does so for morphisms $C \rightarrow \Proj^1_k$. Section \ref{subsec: euler number into affine} does so for morphisms $Y \rightarrow \A^n_k$. Finally, Section \ref{subsec: explicit method} roughly reduces computations of local indices for morphisms $C \rightarrow \Proj^1_k$ into computations of local indices for morphisms $\A^r_k \rightarrow \A^r_k$. This allows us to pinpoint the conditions to guarantee the equality between global Euler $\A^1$-degrees and global $\A^1$-degrees of morphisms $C \rightarrow \Proj^1_k$. Example \ref{example:ellipticcurve} shows that this rough reduction can sometimes be sufficient to precisely determine both notions of global $\A^1$ degrees.

\subsection{Defining Euler numbers of finite maps into the projective line}\label{sec: Relative orientability}

Throughout this subsection, unless otherwise specified, let $\pi: C \rightarrow \Proj^1_k$ be a finite morphism over a field $k$ whose characteristic is not $2$ where $C$ is a smooth projective curve and let $q \in \Proj^1_k$ be a closed point. Express $\pi$ as $\pi = [s_0:s_1]$ where $s_0,s_1$ are global sections of $\pi^* \mathscr{O}_{\Proj^1_k}(1)$ with no common zeroes. Given a closed point $q \in \Proj^1_k$, let $F_q(X,Y)$ be the ``monic minimal'' polynomial of $q$ such that either $F_q(X,Y)$ monic in $X$ if $q \neq \infty$, or $F_q(X,Y) = Y$ if $q = \infty$. From here, we simply say that $F_q(X,Y)$ is the monic minimal polynomial of $q$. In particular, $F_q(X,Y)$ is a nonzero homogeneous irreducible polynomial over $k$ and $F_q(s_0,s_1)$ can be interpreted a section of $\pi^* \mathscr{O}_{\Proj^1_k}(\deg F_q)$.

In this subsection, we define $e(\pi, q)$ when $\pi^* \Oh_{\Proj^1}(\deg q)$ is relatively oriented. Corollary \ref{euler deg independent} shows that $e(\pi, q)$ only depends on $\deg q$, and define the global Euler $\A^1$ degree as the Euler number $e(\pi, q)$. Moreover, when $\pi$ is has some fiber consisting entirely of doubly ramified points, we show that the Euler number is an appropriate multiple of the hyperbollic element $\mathbb{H} = \langle 1 \rangle + \langle -1 \rangle$. \par

More specifically, in Definition \ref{def:eulernumber} below, we define $e(\pi,q)$ by identifying it with the Euler number of the section $F_q(s_0,s_1)$ of the line bundle $\pi^*(\mathscr{O}_{\Proj^1_k}(\deg F_q))$. Lemma \ref{lemma:fiber} below identifies the fiber $\pi^{-1}(q)$ with the zero locus of $F_q(s_0, s_1)$. The Euler number of a section $\sigma$ of a line bundle is defined as the sum of local indices on the zero locus points of $\sigma$. Therefore, $e(\pi,q)$ can be computed as the sum of local indices at the points of $\pi^{-1}(q)$ --- this aligns with the intuition that global degrees can be computed as the sum of local degrees. 

\begin{lemma} \label{lemma:fiber}
Let $q \in \Proj^1_k$ be a closed point. The zero locus of the section $F_q(s_0,s_1)$ is the fiber of $\pi$ at $q$.
\begin{proof}
The zero locus of $F_q(s_0,s_1)$ can be locally determined on open subschemes of $C$ trivializing $\pi^* \mathscr{O}_{\Proj^1_k}(\deg F_q)$. Let $X,Y$ denote the coordinates of $\Proj^1_k$ so that $\A^1_{X/Y} := \Spec(k[X/Y])$ and $\A^1_{Y/X} := \Spec( k[Y/X])$ are open subschemes of $\Proj^1_k$ that trivialize $\mathscr{O}_{\Proj^1_k}(\deg F_q)$ and hence $\pi^{-1}(\A^1_{X/Y})$ and $\pi^{-1}(\A^1_{Y/X})$ trivialize $\pi^* \mathscr{O}_{\Proj^1_k}(\deg F_q)$. Say that $q \in \Spec(k[X/Y]) = \A^1_{X/Y} \subset \Proj^1_k$. The fiber $\pi^{-1}(q)$ is the Cartesian product $C \otimes_{\Proj^1_k} q$, which restricts to $\pi^{-1}(\A^1_{X/Y}) \otimes_{\A^1} q$ over $\pi^{-1}(\A^1_{X/Y})$. Note that $q = \Spec(k[X/Y]/(F_q(X/Y,1)))$ and that $\pi|_{\pi^{-1}(\A^1_{X/Y})}$ is the map $\pi^{-1} (\A^1_{X/Y}) \xrightarrow{s_0/s_1} \A^1_{X/Y}$.
\begin{center}
\begin{tikzcd}
 \pi^{-1}(\A^1_{X/Y}) \otimes_{\A^1_{X/Y}} q \ar[r] \ar[d] & q = \Spec(k[X/Y]/(F_q(X/Y,1))) \ar[d] \\
 \pi^{-1}(\A^1_{X/Y}) \ar[r,"\pi|_{\pi^{-1}(\A^1_{X/Y})} = s_0/s_1"] & \A^1_{X/Y}
\end{tikzcd}
\end{center}
Therefore, $\pi^{-1}(\A^1_{X/Y}) \otimes_{\A^1} q$ is the closed subscheme of $\pi^{-1} (\A^1_{X/Y})$ cut out by $F_q(s_0/s_1,1)$. On the other hand, the section $F_q(s_0,s_1)$ restricts to $F_q(s_0/s_1,1)$ on $\pi^{-1}(\A^1_{X/Y})$. Thus, the zero locus of $F_q(s_0,s_1)$ and the fiber $\pi^{-1}(q)$ agree on $\pi^{-1}(\A^1_{X/Y})$. By symmetry, they agree on $\pi^{-1}(\A^1_{Y/X})$ as well, so they are the same closed subscheme of $C$.
\end{proof}
\end{lemma}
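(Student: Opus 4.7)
The plan is to verify the equality of the two closed subschemes Zariski-locally, using the two standard affine charts of $\Proj^1_k$, each of which trivializes $\mathscr{O}_{\Proj^1_k}(\deg F_q)$. Since $\pi$ is a morphism of schemes, the preimages of these charts cover $C$ and trivialize $\pi^* \mathscr{O}_{\Proj^1_k}(\deg F_q)$, so both the fiber $\pi^{-1}(q)$ and the zero locus of $F_q(s_0,s_1)$ are determined by their restrictions to these two preimages. It therefore suffices to exhibit a single regular function on each chart that cuts out both subschemes.

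First I would set $\A^1_{X/Y} = \Spec(k[X/Y])$ and $\A^1_{Y/X} = \Spec(k[Y/X])$, trivialize $\mathscr{O}_{\Proj^1_k}(\deg F_q)$ on $\A^1_{X/Y}$ by $Y^{\deg F_q}$ and on $\A^1_{Y/X}$ by $X^{\deg F_q}$, and pull these trivializations back along $\pi$ via $s_1^{\deg F_q}$ and $s_0^{\deg F_q}$ respectively. Under these choices, the section $F_q(s_0,s_1)$ restricts on $\pi^{-1}(\A^1_{X/Y})$ to the regular function $F_q(s_0/s_1,1)$ and on $\pi^{-1}(\A^1_{Y/X})$ to $F_q(1,s_1/s_0)$. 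On the other hand, if $q \in \A^1_{X/Y}$, then $q$ is the closed subscheme cut out by the ideal $(F_q(X/Y,1))$, and the fiber $\pi^{-1}(q)$ is the fiber product $\pi^{-1}(\A^1_{X/Y}) \times_{\A^1_{X/Y}} q$, which since $\pi|_{\pi^{-1}(\A^1_{X/Y})}$ is given by $s_0/s_1$ is cut out by the pulled back equation $F_q(s_0/s_1,1)$. So the two subschemes agree on $\pi^{-1}(\A^1_{X/Y})$.

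The remaining task is to handle the other chart. If $q$ lies in the intersection $\A^1_{X/Y} \cap \A^1_{Y/X}$, then by the same argument the ideal $(F_q(1,Y/X))$ cuts out $q$ on $\A^1_{Y/X}$, and the pulled back equation is $F_q(1,s_1/s_0)$, matching the restriction of the section; the two local equations agree on the overlap up to the unit coming from the transition function, so the subschemes glue correctly. If $q$ is the point at infinity lying only in $\A^1_{Y/X}$, the same argument applies there, and on $\pi^{-1}(\A^1_{X/Y})$ both $F_q(s_0/s_1,1)$ and the pullback of the ideal of $q$ are the unit ideal (since on $\A^1_{X/Y}$, $q$ is empty and $F_q(X/Y,1)$ reduces to a unit). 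Assembling the two charts then gives the equality as closed subschemes of $C$. The only subtle point, and the one I would be most careful about, is keeping track of the different trivializations and verifying that the local equations defined above indeed glue to the same global section and the same fiber; but this is immediate from the multiplicativity of $F_q$ under the change of trivialization $Y^{\deg F_q} \mapsto (X/Y)^{\deg F_q} \cdot X^{\deg F_q}$, which is compatible with the homogeneity of $F_q$.
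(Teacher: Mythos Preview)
Your proposal is correct and follows essentially the same approach as the paper: both arguments restrict to the two standard affine charts trivializing $\mathscr{O}_{\Proj^1_k}(\deg F_q)$, identify the section $F_q(s_0,s_1)$ with the regular function $F_q(s_0/s_1,1)$ (resp.\ $F_q(1,s_1/s_0)$), and observe that the fiber product defining $\pi^{-1}(q)$ is cut out by precisely the same function. You are somewhat more explicit than the paper about the case where $q$ lies in only one chart and about the compatibility of the local equations under the transition function, whereas the paper dispatches the second chart with a one-line appeal to symmetry; but this is a difference in level of detail, not in strategy.
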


\begin{definition} \label{def:eulernumber}
Let $q \in \Proj^1_k$ be a closed point. Assume that $\pi^*(\mathscr{O}_{\Proj^1_k}(\deg F_q))$ is a relatively oriented line bundle of $C$. Define the Euler number $e(\pi,q)$ to be the Euler number $e(\pi^* \mathscr{O}_{\Proj^1_k}(\deg F_q), F_q(s_0,s_1))$. 
\end{definition}

A priori, $e(\pi,q)$ might depend on the choice of $F_q(s_0,s_1)$ up to scalar multiplication, but Proposition \ref{proposition:eulernumberpoly} below shows that this is not the case.

Furthermore, the relative orientation of the line bundle $\pi^* \Oh_{\Proj^1_k}(\deg F_q)$ ensures that $e(\pi^* \mathscr{O}_{\Proj^1_k}(\deg F_q))$ is well defined, cf.~Definition \ref{def:eulernumbersection}.

For $\pi^*\mathscr{O}_{\Proj^1_k}(\deg F_q)$ to be relatively orientable, it must be of even degree, i.e. $\deg \pi^* \mathscr{O}_{\Proj^1_k} (\deg F_q) = \deg \pi \cdot \deg F_q$ is even. Conversely, if $L = \pi^* \mathscr{O}_{\Proj^1_k}(\deg F_q)$ is of even degree, then $\Hom(\wedge^{\text{top}} TC, \wedge^{\text{top}} L)$ is also of even degree and hence orientable over $\overline{k}$. There is thus some finite extension $k'$ of $k$ over which the base change of $\Hom(\wedge^{\text{top}} TC, \wedge^{\text{top}} L)$ is orientable. Letting $\pi_{k'}: C_{k'} \rightarrow \mathbb{P}^1_{k'}$ denote the base change of $\pi$ via $\Spec(k') \rightarrow \Spec(k)$, the line bundle $\pi^*_{k'} \mathscr{O}_{\Proj^1_{k'}}(\deg F_q)$ of $C_{k'}$ is relatively orientable. In Section \ref{sec: covering maps} we will calculate the Euler numbers of even-degree covering maps $\pi: C \rightarrow X(1) \cong \mathbb{P}^1_\mathbb{Q}$, where $C$ is a coarse moduli scheme, after base change to a number field $k'$ over which $\pi^* \mathscr{O}_{\Proj^1_{k'}}(1)$ is relatively orientable. Note that the choice of $k'$ may depend on $\pi$. 

The discussion of the above paragraph still holds when $\overline{k}$ is replaced with $k^\text{sep}$ when $\text{char } k \neq 2$ because the multiplication-by-$2$ map on $\text{Pic}(C)$ is a separable isogeny and hence the $\overline{k}$-points of $\text{Pic}(C)$ form a $2$-divisible group, cf. \cite[Proposition 5.9 and Corollary 5.10]{vm11}

\begin{remark}
Although $ \pi^* \mathscr{O}_{\Proj^1_k} (\deg F_q)$ is relatively orientable after base change to both $k'$ and $\overline{k}$, we only use $k'$ to discuss Euler numbers because the Grothendieck-Witt ring of $\overline{k}$ is ``trivial'', by which we mean that $\GW(\overline{k}) \cong \mathbb{Z}$. In this case, $\GW(\overline{k})$ is the (additive) group completion of the equivalence classes of bilinear forms of finite dimensional vector spaces over $\overline{k}$, but such equivalence classes are entirely determined by the ranks of their bilinear forms. Accordingly, the Euler number $E(\pi_{\overline{k}},q)$ must equal $\deg \pi \cdot \deg q \in \mathbb{Z} \cong \GW(\overline{k})$
\end{remark}



We now show that the Euler number $e(\pi,q)$ only depends on $\deg q$. To do so, we use the notion of sections of a line bundle connected by sections with isolated zeros, as introduced in \cite[Definition 37]{kwcubic}. Under certain circumstances, such sections admit the same Euler number by Theorem \ref{thm:eulernumberconnectedsection} below.

\begin{definition} \label{def:sectionsconnected}
Let $\pi: E \rightarrow X$ be a vector bundle of rank $r = \dim X$ where $X$ is a smooth, proper scheme over $k$. Denote $\mathcal{E}$ to be the pullback of $E$ to $X \times \mathbb{A}^1$. We say that sections $\sigma,\sigma'$ with isolated zeros can be connected by sections with isolated zeros if there exist sections $s_i$ for $i = 0,1,\ldots,N$ of $\mathcal{E}$ and rational points $t_i^-$ and $t_i^+$ of $\A^1$ for $i = 1,\ldots, N$ such that
\begin{enumerate}[label=(\arabic*)]
	\item for $i = 0,\ldots, N$, and all closed points $t$ of $\A^1$, the section $(s_i)_t$ of $E$ has isolated zeros,
	\item $(s_0)_{t_0^-}$ is isomorphic to $\sigma$, 
	\item $(s_N)_{t_N^+}$ is isomorphic to $\sigma'$, and
	\item $(s_i)_{t_i^+}$ is isomorphic to $(s_{i+1})_{t_{i+1}^-}$ for $i = 0,\ldots,N-1$. 
\end{enumerate}
Here, given a section $s$ of $\mathcal{E}$ and a rational point $t \in \A^1$, $s_t$ denotes the pullback of $s$ under the embedding $X  \times_k \Spec(k(t)) \xrightarrow{\text{id},t} X \times \mathbb{A}^1$.
\end{definition}

\begin{theorem}{\cite[Corollary 38]{kwcubic}} \label{thm:eulernumberconnectedsection}
Let $\pi: E \rightarrow X$ be a relatively oriented vector bundle on a smooth, proper dimension $r$ scheme $X$ over $k$. Suppose that $\sigma$ and $\sigma'$ are sections of $\pi$ with isolated zeros and let $\sigma_L$ and $\sigma'_L$ be their base changes by an odd degree field extension $L$ of $k$. If $\sigma_L$ and $\sigma'_L$ can be connected by sections with isolated zeros, then $e(E,\sigma) = e(E,\sigma')$. \par
In particular, if any two sections of $\pi$ with isolated zeros can be connected by sections with isolated zeros after base change by an odd degree field extension of $k$, then $e(E,\sigma) = e(E,\sigma')$ for all sections $\sigma$ and $\sigma'$ with isolated zeros.  
\end{theorem}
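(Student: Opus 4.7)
The plan is to combine two ingredients: an $\A^1$-homotopy invariance for Euler numbers along families of sections with isolated zeros, and Springer's injectivity of the restriction map $\GW(k) \to \GW(L)$ for an odd-degree separable extension $L/k$. Granted these, applying the homotopy invariance inductively along the chain of sections $s_0, \dots, s_N$ and rational points $t_i^{\pm}$ appearing in Definition \ref{def:sectionsconnected} yields $e(E_L, \sigma_L) = e(E_L, \sigma'_L)$ in $\GW(L)$; Proposition \ref{local_index_base_change} (applied summand-by-summand to Definition \ref{def:eulernumbersection}) shows that Euler numbers commute with finite separable base change, so $e(E,\sigma)_L = e(E_L, \sigma_L)$ and similarly for $\sigma'$; finally, Springer's injectivity forces $e(E,\sigma) = e(E,\sigma')$ in $\GW(k)$.

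The heart of the argument is the homotopy invariance step. Given a single section $s$ of $\mathcal{E}$ over $X \times \A^1$ all of whose fibers $s_t$ have isolated zeros, I want to show that $e(E, s_t)$ is constant in $t$. The vanishing locus $Z \subset X \times \A^1$ is proper over $\A^1$ (since $X$ is proper) and quasi-finite on fibers, hence finite over $\A^1$; combined with constancy of fiber length, $\Oh_Z$ is a locally free $\Oh_{\A^1}$-module of rank equal to the top Chern number of $E$. Working in Nisnevich local coordinates on $X$ and a compatible trivialization of $\mathcal{E}$, the distinguished socle construction spreads out over $\A^1$ to give a non-degenerate symmetric $\Oh_{\A^1}$-valued bilinear form on $\Oh_Z$, whose fiber at any $k$-rational point $t$ reproduces the local-index pairing at $s_t$. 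By the $\A^1$-homotopy invariance of Grothendieck-Witt rings (the isomorphism $\GW(k) \xrightarrow{\sim} \GW(k[t])$), this family has constant class in $\GW$, yielding $e(E, s_t) = e(E, s_{t'})$ for all rational $t, t'$. Iterating at each junction $t_i^+ = t_{i+1}^-$ produces the desired equality over $L$.

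For the Springer step, the composition $\GW(k) \to \GW(L) \xrightarrow{\Tr_{L/k}} \GW(k)$ is multiplication by $\Tr_{L/k}\langle 1 \rangle$, which is a form of rank $[L:k]$; when $[L:k]$ is odd, classical Springer injectivity on the Witt ring $W(k)$ together with preservation of rank mod $2$ gives injectivity of $\GW(k) \to \GW(L)$. The main obstacle I anticipate is the careful bookkeeping in the spreading-out step: the distinguished socle element depends on a choice of Nisnevich coordinates and a compatible trivialization, and one must check that these choices can be made coherently in families over the parameter $\A^1$ and glued across the transitions $t_i^+ = t_{i+1}^-$, which is precisely where relative orientability of $\mathcal{E}$ enters. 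The ``in particular'' clause is then immediate by transitivity: if every pair $\sigma, \sigma'$ of sections with isolated zeros is connected by sections with isolated zeros after an odd-degree base change, then the first part of the theorem applies to each such pair.
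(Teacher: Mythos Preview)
The paper does not prove this theorem at all: it is quoted verbatim as \cite[Corollary 38]{kwcubic} and used as a black box (specifically in the proof of Proposition \ref{proposition:eulernumberpoly}). There is therefore no ``paper's own proof'' to compare your proposal against.

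That said, your outline is a faithful reconstruction of the strategy in the cited Kass--Wickelgren reference: the two ingredients are exactly (i) an $\A^1$-invariance statement for the Euler number along a family of sections with isolated zeros, obtained by spreading the Scheja--Storch / EKL bilinear form over $\A^1$ and invoking homotopy invariance of $\GW$, and (ii) injectivity of $\GW(k) \to \GW(L)$ for odd-degree separable extensions to descend the equality from $L$ to $k$. Your identification of the delicate point---making the distinguished socle and compatible trivializations coherent in the $\A^1$-family---is also accurate; in \cite{kwcubic} this is handled by the machinery of \S4 there (their Lemma 36 and Corollary 37). So your proposal is correct in spirit, but for the purposes of this paper no argument is needed beyond the citation.
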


\begin{proposition} \label{proposition:eulernumberpoly}
Let $F(X,Y)$ and $F'(X,Y)$ be nonzero monic homogeneous polynomials over $k$ of degree $d$. Assume that $\pi^*\mathscr{O}_{\mathbb{P}^1_k}(d)$ is relatively oriented, so that Euler numbers of sections can be defined. \par
The sections $F(s_0,s_1)$ and $F'(s_0,s_1)$ of $\pi^*\mathscr{O}_{\mathbb{P}^1_k}(d)$ have equal Euler numbers, i.e.~
$$e(\pi^* \mathscr{O}_{\mathbb{P}^1_k}(d), F(s_0,s_1)) = e(\pi^* \mathscr{O}_{\mathbb{P}^1_k}(d), F'(s_0,s_1)).$$
\begin{proof}

Note that $F(s_0,s_1)$ and $F'(s_0,s_1)$ are connected via the section $(1-t) F(s_0, s_1) + t F'(s_0, s_1)$ of the pullback of $\pi^* \mathscr{O}_{\mathbb{P}^1_k}(d)$ under the projection map $C \times \mathbb{A}^1 \to C$. Moreover, for each closed point $t_0 \in \mathbb{A}^1_L$ for any base change $L$ of odd degree over $k$, the pullback $(1-t_0) F(s_0, s_1) + t_0 F'(s_0, s_1)$ of $(1-t) F(s_0, s_1) + t F'(s_0, s_1)$ under the embedding $C \times_k \operatorname{Spec} k(t_0) \xrightarrow{\text{id}, t_0} C \times \mathbb{A}^1$ gives a degree-$d$ polynomial in $s_0$ and $s_1$. A point $p \in C$ is a zero of this pullback section if and only if $(1-t_0) F(X,Y) + t_0F'(X,Y)$ vanishes at $[X:Y] = [s_0(p), s_1(p)] = \pi(p)$. Equivalently, the vanishing locus of the section $(1-t_0) F(s_0, s_1) + t_0 F'(s_0, s_1)$ is the fiber of the vanishing locus of $(1-t_0)F(X,Y) + t_0 F(X,Y)$ under $\pi$. The vanishing locus of $(1-t_0) F(X,Y) + t_0F'(X,Y)$ must be a zero dimensional subscheme of $\mathbb{P}^1_L$ because $(1-t_0) F(X,Y) + t_0 F'(X,Y)$ is a monic homogeneous polynomial of degree $d$. Moreover, since $\pi$ is a finite morphism, the vanishing locus of the section $(1-t_0) F(s_0, s_1) + t_0 F'(s_0,s_1)$ must be zero dimensional as well and hence the section has isolated zeros. Therefore, $F(s_0,s_1)$ and $F'(s_0, s_1)$ are connected by sections with isolated zeros, so Theorem \ref{thm:eulernumberconnectedsection} concludes that their Euler numbers are equal.

\end{proof} 
\end{proposition}

\begin{corollary} \label{euler deg independent}
Assuming that $\pi^*\mathscr{O}_{\mathbb{P}^1_k}(\deg q)$ is relatively oriented, the Euler number $e(\pi, q)$ depends only on $\deg q$. 
\begin{proof}
Say that $q_1$ and $q_2$ are two closed points of $\mathbb{P}^1_k$ of equal degree $d$ over $k$. Let $F_1(X,Y)$ and $F_2(X,Y)$ be their respective irreducible (homogeneous) polynomials. By Proposition \ref{proposition:eulernumberpoly} above, 
$$e(\pi^* \mathscr{O}_{\mathbb{P}^1_k}(d), F(s_0,s_1)) = e(\pi^* \mathscr{O}_{\mathbb{P}^1_k}(d), F'(s_0,s_1)).$$
Thus,
$$e(\pi,q_1) = e(\pi,q_2).$$
\end{proof}
\end{corollary}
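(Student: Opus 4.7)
The plan is to reduce the statement directly to Proposition \ref{proposition:eulernumberpoly}, which is the substantive work. Given two closed points $q_1, q_2 \in \Proj^1_k$ of the same degree $d$, let $F_{q_1}, F_{q_2}$ be their respective minimal homogeneous polynomials (each of degree $d$). By Definition \ref{def:eulernumber}, both Euler numbers $e(\pi, q_i) = e(\pi^* \Oh_{\Proj^1_k}(d), F_{q_i}(s_0, s_1))$ are Euler numbers of sections of the \emph{same} line bundle $\pi^* \Oh_{\Proj^1_k}(d)$, which is relatively orientable by hypothesis. So the comparison is meaningful.

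The only thing to check before invoking Proposition \ref{proposition:eulernumberpoly} is that the sections $F_{q_i}(s_0, s_1)$ have isolated zeros. This follows from Lemma \ref{lemma:fiber}, which identifies the zero locus of $F_{q_i}(s_0, s_1)$ with the fiber $\pi^{-1}(q_i)$, together with the fact that $\pi$ is a finite morphism (so the fiber is a finite set of closed points), combined with Proposition \ref{isozero}, which says a global section of a relatively orientable line bundle has isolated zeros exactly when its zero locus is a finite set of points.

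With the isolated zeros hypothesis verified, Proposition \ref{proposition:eulernumberpoly} applies directly with $F = F_{q_1}$ and $F' = F_{q_2}$ (both of degree $d$), yielding
\begin{equation*}
    e(\pi^* \Oh_{\Proj^1_k}(d), F_{q_1}(s_0, s_1)) = e(\pi^* \Oh_{\Proj^1_k}(d), F_{q_2}(s_0, s_1)),
\end{equation*}
which by Definition \ref{def:eulernumber} is exactly $e(\pi, q_1) = e(\pi, q_2)$.

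There is no real obstacle here: the content of the corollary is already packaged into Proposition \ref{proposition:eulernumberpoly} (via the homotopy $(1-t)\sigma + t\sigma'$ and Theorem \ref{thm:eulernumberconnectedsection}). The corollary is a bookkeeping step that translates the polynomial-level invariance into an invariance depending only on the degree of the closed point. The only minor verification is the isolated-zeros condition, which is immediate from the finiteness of $\pi$ via Lemma \ref{lemma:fiber} and Proposition \ref{isozero}.
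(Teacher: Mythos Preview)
Your proof is correct and follows essentially the same approach as the paper: both reduce directly to Proposition \ref{proposition:eulernumberpoly} applied to the minimal polynomials of two closed points of equal degree. You additionally verify the isolated-zeros hypothesis via Lemma \ref{lemma:fiber} and Proposition \ref{isozero}, a detail the paper leaves implicit.
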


Based on Corollary \ref{euler deg independent} above, we can define the global Euler $\A^1$-degree by letting $q$ be $k$-rational.

\begin{definition} \label{naiveEulerequiv}
Let $\pi: C \to \Proj^1_k$ be a finite morphism of smooth curves over $k$. Let $\pi = [s_0: s_1]$ where $s_0, s_1$ are global sections of $\pi^* \Oh_{\Proj^1_k}(1)$ with no common zeroes. Assume that $\pi^* \Oh_{\Proj^1_k}(1)$ is relatively oriented. Let $q \in \Proj^1_k(k)$ be a $k$-rational point, $F_q(X,Y)$ the monic minimal polynomial of $q$, and $Z$ the zero locus of $F_q(s_0,s_1)$, which is considered as a section of $\pi^* \Oh_{\Proj^1_k}(1)$. We define the global Euler $\A^1$-degree, or the Euler number, of $\pi: C \to \mathbb{P}^1_k$ by
\begin{equation*}
    \EAdeg \pi := e(\pi, q) = \sum_{p \in Z} \ind_{p} F_q(s_0, s_1).
\end{equation*}
\end{definition}

\begin{definition}
Let $\pi: E \to X$ be a relatively oriented line bundle on a smooth curve $X$ over $k$, and let $\sigma$ be a section with isolated zeros. We say that a zero $p$ of $\sigma$ has ramification index $e$ if, for a trivializing open neighborhood $U$ around $p$ in $X$ and a trivialization $E|_U \cong \mathscr{O}_U$, the function corresponding to $\sigma|_U$ yields a map $U \to \mathbb{A}^1$ of curves of ramification index $e$. Note that this definition does not depend on $U$ or the trivialization.
\end{definition}

We now show that the local index of a section of a relatively oriented line bundle at a doubly ramified point is a multiple of the hyperbolic element $\mathbb{H} = \langle 1 \rangle + \langle -1 \rangle \in \text{GW}(k)$. In Section \ref{sec: covering maps}, we will use the fact that the covering maps from many modular curves to $X(1)$ 
have fibers at $j = 1728$ consisting entirely of doubly ramified points to conclude that the Euler numbers of these maps are integer multiples of the hyperbolic element.

\begin{proposition}\label{euler double ramify}
Let $\pi: E \rightarrow X$ be a relatively oriented line bundle on a smooth curve $X$ over $k$, let $\sigma$ be a section with isolated zeros, let $Z$ be the zero locus of $\sigma$, and let $p \in Z$.

If $p \in Z$ is a doubly ramified point for $\pi$ such that $k(p)$ is separable over $k$, then $$ \ind_p \sigma = [k(p):k]( \langle 1 \rangle + \langle - 1 \rangle).$$

\begin{proof}
It suffices to show this in the case where $k = k(p)$ because 

$$\ind_p \sigma = \Tr_{k(p)/k} \ind_{p_{k(p)}} \sigma_{k(p)},$$

by Proposition \ref{local_index_base_change}, and because Lemma \ref{TraceDoubleRamify} tells us that 

$$\Tr_{k(p)/k} (\langle 1 \rangle + \langle -1 \rangle) = [k(p):k](\langle 1 \rangle + \langle -1 \rangle).$$

Now assume that $k = k(p)$. Let $\phi: U \rightarrow \Spec(k[x])$ be Nisnevich coordinates around $p$, which exist by Proposition \ref{curvenis}. Via $\phi$, the local ring $\mathscr{O}_{Z,p}$ can be presented as $k[x]_{q}/(f)$ where $q = \phi(p)$ for some polynomial $f \in k[x]$ \cite[Section 4, the discussion after Lemma 24 leading up to Lemma 27]{kwcubic}. 

We show that $\Omega_{p/Z} \cong \Omega_{(k[x]_q/(f))/k}$. Note that there is an closed embedding $p \hookrightarrow \Spec(\mathscr{O}_{Z,p})$. The composition of this closed embedding with the isomorphism $\mathscr{O}_{Z,p} \cong \Spec(k[x]_q/(f))$ is the same as the closed embedding $q \hookrightarrow \Spec(k[x]_q/(f))$. In particular, 
$$\Omega_{p/\Spec(\mathscr{O}_{Z,p})} \cong \Omega_{q/\Spec(k[x]_q/(f))} \cong \Omega_{(k[x]_q/(f))/k}.$$
On the other hand, $Z$ consists of finitely many closed points \ref{isozero} and is hence finite over $\Spec(k)$. Therefore, $\Spec(\mathscr{O}_{Z,p})$ is an open subscheme of $Z$, so 
$$\Omega_{\Spec(\mathscr{O}_{Z,p})/Z} = 0$$
and hence
$$\Omega_{p/\Spec(\mathscr{O}_{Z,p})} \cong \Omega_{p/Z}.$$
Thus, $\Omega_{p/Z} \cong \Omega_{(k[x]_q/(f))/k}$ as desired.

Let $g(x) \in k[x]$ be the monic generator of the prime ideal $q$ of $k[x]$. In particular, $g$ is linear because $p$, and hence $q$, has residue field $k$. Moreover, since $p$ is doubly ramified in $Z$ and since $\Omega_{p/Z} \cong \Omega_{(k[x]_q/(f))/k}$, the point $q$ is doubly ramified in $\Spec(k[x]_q/(f))$. Therefore, $f$ is divisible by $g$ exactly twice and hence the local ring $k[x]_q/(f)$ can be presented as $k[x]_q/(g^2)$.

By \cite[Section 3]{SS75}, cf. \cite[Section 4, the discussion after Lemma 27]{kwcubic}, the presentation $k[x]_q/(g^2) \cong \mathscr{O}_{Z,p}$ of the complete intersection $k$-algebra $\mathscr{O}_{Z,p}$ determines a canonical isomorphism
$$\Hom_k(\mathscr{O}_{Z,p},f) \cong \mathscr{O}_{Z,p}$$
of $\mathscr{O}_{Z,p}$-modules. We take $\eta \in \Hom_k(\mathscr{O}_{Z,p},f)$ to correspond to $1 \in \mathscr{O}_{Z,p}$. In Definition \ref{def:localind}, $\ind_p \sigma$ is defined as the Grothendieck-Witt group element induced by the symmetric bilinear form $\beta: \mathscr{O}_{Z,p} \times \mathscr{O}_{Z,p} \rightarrow k, (x,y) \mapsto \eta(xy)$. When $p$ is a $k$-point, one may replace $\eta$ with any $k$-linear homomorphism taking the distinguished socle element to $1$. In this case, $g \in k[x]_q/(g^2)$ is the distinguished socle element \cite[Example 32]{kwcubic}. For instance, we may choose $\eta$ to be the $k$-linear map $k[x]_q/(g^2) \rightarrow k$ taking $1$ to $0$ and $g$ to $1$. Giving $k[x]_q/(g^2)$ the $k$-basis $(1,g)$, the bilinear form $\beta$ has matrix representation
$$\begin{pmatrix} 0 & 1 \\ 1 & 0 \end{pmatrix}$$
which reduces to the element $\langle 1 \rangle + \langle -1 \rangle$ in $\GW(k)$ by Lemma \ref{bezoutlemma}.
\end{proof}
\end{proposition}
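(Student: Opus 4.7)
The plan is to first reduce to the case where $k = k(p)$, and then explicitly present $\mathscr{O}_{Z,p}$ as a quotient of a local ring so that the defining bilinear form of $\ind_p \sigma$ can be written down in a concrete basis.

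For the reduction, I would apply Proposition \ref{local_index_base_change} to obtain $\ind_p \sigma = \Tr_{k(p)/k} \ind_{p_{k(p)}} \sigma_{k(p)}$, and then use Lemma \ref{TraceDoubleRamify}, which evaluates the trace of the hyperbolic element as $[L:k](\langle 1 \rangle + \langle -1 \rangle)$. Thus it suffices to establish $\ind_{p_{k(p)}} \sigma_{k(p)} = \langle 1 \rangle + \langle -1 \rangle$ after base change, so I assume $k = k(p)$ for the remainder.

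Next, by Proposition \ref{curvenis} there exist Nisnevich local coordinates $\phi: U \to \Spec(k[x])$ around $p$, and via $\phi$ I would identify $\mathscr{O}_{Z,p}$ with $k[x]_q/(f)$ for some polynomial $f$, where $q = \phi(p)$ is a $k$-rational point whose maximal ideal is generated by a linear polynomial $g \in k[x]$. The key step is to translate the geometric condition that $p$ is doubly ramified into the algebraic condition that $f$ is divisible by $g$ exactly twice, giving $\mathscr{O}_{Z,p} \cong k[x]_q/(g^2)$. I would argue this by comparing K\"ahler differentials: since $Z$ consists of finitely many closed points over $k$, the inclusion $\Spec(\mathscr{O}_{Z,p}) \hookrightarrow Z$ is an open immersion, so $\Omega_{p/Z} \cong \Omega_{p/\Spec(\mathscr{O}_{Z,p})}$, which is identified with $\Omega_{q/\Spec(k[x]_q/(f))}$ through the coordinate $\phi$.

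With the presentation $\mathscr{O}_{Z,p} \cong k[x]_q/(g^2)$ in hand, the index computation is mechanical. Scheja--Storch duality makes $\Hom_k(\mathscr{O}_{Z,p}, k)$ a free rank one $\mathscr{O}_{Z,p}$-module with $g$ playing the role of the distinguished socle element (cf.\ Example 32 of \cite{kwcubic}). Taking the $k$-linear functional $\eta$ with $\eta(1) = 0$ and $\eta(g) = 1$, the associated bilinear form $\beta(a,b) = \eta(ab)$ has matrix $\left(\begin{smallmatrix} 0 & 1 \\ 1 & 0 \end{smallmatrix}\right)$ in the basis $(1,g)$, which represents the hyperbolic element $\langle 1 \rangle + \langle -1 \rangle$ in $\GW(k)$ by Lemma \ref{bezoutlemma}. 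The main obstacle, in my view, is the K\"ahler differentials comparison: one needs to confirm carefully that ``doubly ramified'' as a geometric condition on $\pi$ at $p$ propagates through the Nisnevich coordinate $\phi$ to a doubling of $g$ in $f$, and to check that this is independent of the coordinate chosen; once that is in place, all remaining steps are formal.
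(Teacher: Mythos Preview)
Your proposal is correct and follows essentially the same route as the paper: reduce to $k = k(p)$ via Proposition~\ref{local_index_base_change} and Lemma~\ref{TraceDoubleRamify}, present $\mathscr{O}_{Z,p}$ as $k[x]_q/(g^2)$ using Nisnevich coordinates and a K\"ahler-differentials argument to transfer the double ramification, and then compute the bilinear form explicitly in the basis $(1,g)$ to obtain the hyperbolic element via Lemma~\ref{bezoutlemma}. The paper's proof fills in exactly the K\"ahler-differentials comparison you flag as the main obstacle, so your outline matches it step for step.
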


We note that \cite[Example 32]{kwcubic} and \cite[Main Theorem, Proposition 14]{kwEKL} imply that if $C = \Proj^1_k$, then the Euler number of $\pi$, as defined above, is equal to Morel's $\A^1$-Brouwer degree of $\pi$. Its value, as an element in $\GW(k)$, can be directly computed from the EKL class of a non-degenerate bilinear form $\beta$ induced from $\pi$ considered as a morphism $\A^1_k \to \A^1_k$.

Suppose that $\pi: C \to \mathbb{P}^1$ is a morphism such that $\pi^* \Oh_{\mathbb{P}^1_k}(1)$ is relatively oriented, and there exists a $k$-rational point $q \in \mathbb{P}^1(k)$ at which all of its fibers doubly ramify. There are two separate ways to compute the global Euler $\A^1$ degree of $\pi$. The first method immediately follows from Proposition \ref{euler double ramify}.

\begin{proposition} \label{A1 degree double ramify}
    Let $C/k$ be a smooth projective curve. Suppose $\pi: C \to \mathbb{P}^1_k$ is a morphism such that $\pi^* \Oh_{\mathbb{P}^1}(1)$ is relatively oriented, and there exists a $k$-rational point $q \in \mathbb{P}^1_k(k)$ whose fiber points all doubly ramify and all have separable residue field over $k$. Then the global Euler $\A^1$ degree of $\pi$ is given by
    \begin{equation*}
        \EAdeg \pi = \frac{\deg \pi}{2} (\langle 1 \rangle + \langle -1 \rangle).
    \end{equation*}
\end{proposition}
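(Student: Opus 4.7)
The plan is to combine Definition \ref{naiveEulerequiv} with Proposition \ref{euler double ramify} and then apply the usual degree formula for a finite morphism of smooth curves. Since the local index at every point of the fiber over $q$ has already been computed in Proposition \ref{euler double ramify}, the remaining content is purely the bookkeeping that converts a sum over scheme-theoretic fiber points into $\tfrac{1}{2} \deg \pi$.

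First I would invoke Definition \ref{naiveEulerequiv} to write
\begin{equation*}
    \Adeg \pi \;=\; e(\pi, q) \;=\; \sum_{p \in Z} \ind_p F_q(s_0,s_1),
\end{equation*}
where $F_q$ is the (linear) minimal polynomial of the $k$-rational point $q$ and $Z$ is the zero locus of $F_q(s_0,s_1)$ considered as a section of $\pi^*\Oh_{\Proj^1_k}(1)$. By Lemma \ref{lemma:fiber}, $Z$ coincides with the scheme-theoretic fiber $\pi^{-1}(q)$, so the sum runs over the finitely many closed points of $\pi^{-1}(q)$.

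Next, since by assumption every point $p \in \pi^{-1}(q)$ is doubly ramified for $\pi$, I would apply Proposition \ref{euler double ramify} to each such $p$ to obtain
\begin{equation*}
    \ind_p F_q(s_0,s_1) \;=\; [k(p):k]\bigl(\langle 1 \rangle + \langle -1 \rangle\bigr).
\end{equation*}
Collecting these contributions,
\begin{equation*}
    \Adeg \pi \;=\; \Bigl( \sum_{p \in \pi^{-1}(q)} [k(p):k] \Bigr)\bigl(\langle 1 \rangle + \langle -1 \rangle\bigr).
\end{equation*}

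The last step is a standard identity from the theory of finite maps of smooth curves: $\pi$ is automatically finite flat, so for any closed point $q \in \Proj^1_k$ one has $\sum_{p \in \pi^{-1}(q)} e_p\,[k(p):k(q)] = \deg \pi$, where $e_p$ is the ramification index of $\pi$ at $p$. Here $q$ is $k$-rational, so $[k(q):k]=1$, and the doubly ramified hypothesis means $e_p = 2$ for every $p \in \pi^{-1}(q)$. Therefore $\sum_{p} [k(p):k] = \tfrac{1}{2}\deg \pi$, which yields the claimed formula. The only mild subtlety, and the point I would be most careful about, is reconciling the definition of ``doubly ramified'' (i.e.~$e_p=2$) with the multiplicity computation carried out in Proposition \ref{euler double ramify}, where it was translated into the algebraic statement that the local ring is of the form $k[x]_q/(g^2)$; once that identification is in place, the remainder of the argument is immediate.
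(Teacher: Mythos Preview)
Your proposal is correct and follows essentially the same approach as the paper: both arguments invoke Definition \ref{naiveEulerequiv}, apply Proposition \ref{euler double ramify} at each fiber point to get $[k(p):k](\langle 1\rangle + \langle -1\rangle)$, and then sum using the degree formula $\sum_p e_p[k(p):k] = \deg\pi$ with $e_p=2$. If anything, your version is slightly more careful in explicitly citing the finite-flat degree identity, whereas the paper's proof phrases the same step more informally.
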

\begin{proof}
    By definition of the global Euler $\A^1$ degree,
    \begin{equation*}
        \EAdeg \pi = \sum_{p \in Z} \ind_{p} F_q(s_0, s_1).
    \end{equation*}
    where $F_q(S_0,s_1)$ is a global section of the line bundle $\pi^* \Oh_{\mathbb{P}^1}(1)$. Since every fibral point of $\pi$ at $q$ is doubly ramified, the reduction of this fiber is of degree $\frac{1}{2} \deg \pi$. By Proposition \ref{euler double ramify},
    \begin{equation*}
        \EAdeg \pi = \sum_{p \in Z} [k(p): k] (\langle 1 \rangle + \langle -1 \rangle) = \frac{\deg \pi}{2} (\langle 1 \rangle + \langle -1 \rangle).
    \end{equation*}
\end{proof}

\begin{remark} \label{SW21}
A consequence of Srinivasan and Wickelgren's recent result \cite[Proposition 19]{SW21} is that the global Euler $\A^1$-degree of $\pi$ is always equal to an integer multiple of the hyperbolic element, as long as $\pi^* \Oh_{\Proj^1}(1)$ is relatively oriented. Nevertheless, we include the results on global Euler $\A^1$ degrees to make the paper self-contained and to compare the two different notions of global $\A^1$ degrees of morphisms, as will appear in subsequent sections of the manuscript.
\end{remark}

\subsection{The Euler number of a finite map into \texorpdfstring{$\A^n$}{An}.} \label{subsec: euler number into affine}

In Section \ref{subsec: explicit method} below, we roughly reduce computations of local indices for finite maps from a general smooth affine scheme $Y$ to an affine space to computations of local indices for finite maps from an affine space to itself. For the latter type of map, the local indices are equivalent to the local $\A^1$-degrees \cite[Example 32]{kwcubic}, which can be computed by the methods in \cite{kwEKL} that we describe above in Definition \ref{def:ekl} and Theorem \ref{localEKL}.

In this section, we define the Euler number $e(\pi, q)$ for a finite $k$-map $\pi: Y \rightarrow \A^n_k$ at a closed point $q \in \A^n_k$. This definition is similar to that for finite maps into $\Proj^1_k$ in Section \ref{sec: Relative orientability} above. The precise conditions under which we define $e(\pi, q)$ are also similar to those in Section \ref{sec: Relative orientability}.

Given a scheme $Y$, recall that a map $\pi: Y \rightarrow \A^n_k$ corresponds to a choice of a global section of $\Oh_Y^n$, which in turn corresponds to a choice of $n$ global sections of $\Oh_Y$. In particular, the components of the map are the $n$ global sections of $\Oh_Y$.

Given a closed point $q \in \A^n_k$, let $f_1(x_1),\ldots,f_n(x_n) \in k[x]$ be nonzero minimal (monic) polynomials for the coordinates $q_1,\ldots,q_n$ of $q$ respectively. If $q_i = 0$, then we let $f_i(x_i) = x_i$. The map
\begin{align*}
    \A^n &\rightarrow \A^n \\
    (t_1,\ldots,t_n) &\mapsto (f_1(t_1),\ldots,f_n(t_n))
\end{align*}
corresponds to a section of $\Oh_{\A^n}^n$ and this section has zeroes precisely at $q$. Pulling back this section via $\pi$ yields a section of $\Oh_Y^n$ whose zeroes are precisely at $\pi^{-1}(q)$. Note that the points of $\pi^{-1}(q)$ are isolated because $\pi$ is finite.

Assuming that $Y$ is smooth of dimension $n$ and that $\Oh_Y^n$ is relatively oriented, we define the Euler number $e(\pi, q)$ of $\pi$ at $q$. Note that $\Oh_Y^n$ is relatively orientable exactly when $\bigwedge^{\text{top}} TY$ is orientable because $\bigwedge^{\text{top}} \Oh_Y^n = \Oh_Y$.

\begin{definition} \label{def:affine euler number}
Let $Y$ be a smooth scheme of dimension $n$ with a finite map $\pi: Y \rightarrow \A^n$. Assume that $\Oh_Y^n$ is relatively oriented. Let $q \in \A^n$ be a closed point. For $1 \leq i \leq n$, let $f_i(x_i)$ be the (monic) minimal polynomial of the coordinate $q_i$ of $q$. The map $(f_1(x_1),\ldots,f_n(x_n)): \A^n \rightarrow \A^n$ corresponds to a section $\sigma_q$ of $\Oh_{\A^n}^n$. Define the Euler number $e(\pi, q)$ by
$$e(\pi,q) := e(\Oh^n_Y, \pi^* \sigma_q).$$
\end{definition}

In the case that $n = 1$, Definitions \ref{naiveEulerequiv} and \ref{def:affine euler number} coincide. 

\begin{proposition} \label{curve_euler_number_2}
Given a smooth projective curve $C$ over any field $k$, let $\pi = [s_0:s_1]: C \to \Proj^1_k$ be a finite morphism. Let $Y = \pi^{-1}(\A^1_k)$, which is an open affine subscheme of $C$. Suppose that $\pi^* \Oh_{\Proj^1_k}(1)$ is relatively oriented. Pick a $k$-rational point $q \in \A^1_k \subset \Proj^1_k$. Let $F_q(X, Y)$ be the monic minimal polynomial of $q$. Then for any $p \in \pi^{-1}(q) = \pi|_Y^{-1}(q)$,
\begin{equation*}
    \ind_p F_q(s_0,s_1) = \ind_p F_q(s_0/s_1, 1)
\end{equation*}
where the left hand side is the local index of the Euler number $e(\pi, q)$ and the right hand side is the local index of the Euler number $e(\pi|_Y, q)$ defined by restricting the relative orientation on $\pi^*\mathscr{O}_{\mathbb{P}^1_k}(1)$ to $Y$. In particular,
\begin{equation*}
    e(\pi, q) = e(\pi|_Y, q).
\end{equation*}
\end{proposition}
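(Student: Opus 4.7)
The plan is to exploit the fact that $s_1$ is a section of $\pi^* \Oh_{\Proj^1_k}(1)$ whose vanishing locus is $\pi^{-1}(\infty)$, and so $s_1$ does not vanish on $Y = \pi^{-1}(\A^1_k)$. Hence $s_1$ gives a trivialization $s_1 : \Oh_Y \xrightarrow{\sim} \pi^* \Oh_{\Proj^1_k}(1)|_Y$, and under this identification the section $F_q(s_0, s_1)$ restricts over $Y$ to the function $F_q(s_0/s_1, 1)$. I would first check that $\Oh_Y$ is relatively orientable, which is required for $e(\pi|_Y, q)$ to be defined via Definition \ref{def:affine euler number}: restricting the defining isomorphism $L^{\otimes 2} \cong \Hom(TC, \pi^* \Oh_{\Proj^1_k}(1))$ to $Y$ and composing with the $s_1$-trivialization yields $(L|_Y)^{\otimes 2} \cong \Hom(TY, \Oh_Y)$, so $L|_Y$ relatively orients $\Oh_Y$. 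By (the argument of) Lemma \ref{lemma:fiber}, the zero scheme of $F_q(s_0, s_1)$ on $C$ and the zero scheme of $F_q(s_0/s_1, 1)$ on $Y$ are both equal to the fiber $\pi^{-1}(q) = \pi|_Y^{-1}(q)$, so the local ring $\Oh_{Z,p}$ entering both local index computations is literally the same.

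The central step is to arrange for the compatible trivializations on the two sides to agree. Choose Nisnevich local coordinates $\phi : U \to \A^1_k$ at $p$ with $U \subset Y$. For the left-hand side, I would take $s_1|_U$ as the chosen trivializing section of $\pi^* \Oh_{\Proj^1_k}(1)|_U$; after possibly shrinking $U$, there exists $f \in \Gamma(U, L)$ such that $f^{\otimes 2}$ corresponds to the homomorphism sending the basis $\partial/\partial \phi$ of $TC|_U$ to $s_1$, so this is a compatible trivialization. Under the $s_1$-trivialization, the distinguished basis $s_1$ of $\pi^*\Oh_{\Proj^1_k}(1)|_U$ corresponds to the distinguished basis $1$ of $\Oh_Y|_U$, so the very same $f$ simultaneously serves as a compatible trivialization of the relative orientation of $\Oh_Y$.

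With these matching compatible trivializations, both local indices are computed from the symmetric bilinear form $(a,b) \mapsto \eta(ab)$ on the same local algebra $\Oh_{Z,p}$, with the same $k$-linear functional $\eta$ normalizing the same distinguished socle element, since the section $F_q(s_0, s_1)$ and the function $F_q(s_0/s_1, 1)$ are identified under the $s_1$-trivialization. Equality of the local indices at each $p$ follows immediately, with Proposition \ref{local_index_base_change} handling points $p$ whose residue field is a non-trivial finite separable extension of $k$; summing over $p \in \pi^{-1}(q)$ then gives $e(\pi, q) = e(\pi|_Y, q)$. I expect the main obstacle to be the bookkeeping of the previous paragraph — verifying that a single $f$ can be chosen to make the trivializations compatible with both relative orientations simultaneously — but once this is established the equality of the bilinear forms, and hence of the local indices, is essentially definitional.
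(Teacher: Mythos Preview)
Your proposal is correct and follows essentially the same approach as the paper: both arguments restrict the relative orientation of $\pi^*\Oh_{\Proj^1_k}(1)$ to $Y$ to obtain one for $\Oh_Y$, choose common Nisnevich coordinates on a neighborhood $U \subset Y$ of $p$, and then observe that with matching compatible trivializations the two local indices arise from the identical bilinear form on the same local algebra $\Oh_{Z,p}$. Your version is in fact more explicit than the paper's about why the trivializations can be taken to agree --- you spell out that $s_1$ provides the identification $\Oh_Y \cong \pi^*\Oh_{\Proj^1_k}(1)|_Y$ carrying $1$ to $s_1$, whereas the paper simply asserts $\pi^*\Oh_{\Proj^1_k}(1)|_{U\cap Y} = \Oh_Y|_{U\cap Y}$ and that the indices are ``induced from the same choice of the Nisnevich local coordinates and compatible trivializations.''
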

\begin{proof}
Because $C$ is a smooth projective curve over $k$, both $C$ and $Y$ are local complete intersections over $k$. Because $Y$ is an affine local complete intersection, $Y$ is a set theoretic complete intersection \cite[Corollary 5]{ku78}. The relative orientation
\begin{equation*}
    \text{Hom}(TC, \pi^* \Oh_{\Proj^1_k}(1)) \cong L^{\otimes 2}
\end{equation*}
on $\pi^* \Oh_{\Proj^1_k}(1)$ restricts to the relative orientation
\begin{equation*}
    \text{Hom}(TY, \Oh_Y) \cong \text{Hom}(TY, \pi|_Y^* \Oh_{\A^1}) \cong \text{Hom}(TC|_{Y}, \pi^* \Oh_{\Proj^1_k}(1)|_{Y}) \cong L|_Y^{\otimes 2}.
\end{equation*}
on the structure sheaf $\Oh_Y$.

Let $\phi: U \to \A^1_k$ be a Nisnevich local coordinate of $p \in C$. Then the restriction $\phi|_Y: U \cap Y \to \A^1_k$ is also a Nisnevich local coordinate at $p \in Y \subset C$. We can choose a compatible trivialization of $\pi^* \Oh_{\Proj^1_k}(1)|_{U \cap Y} = \Oh_Y|_{U \cap Y}$ by sufficiently shrinking $U \cap Y$. Because both $\ind_p F_q(s_0, s_1)$ and $\ind_p F_q(s_0/s_1,1)$ are induced from the same choice of the Nisnevich local coordinates and compatible trivializations, we obtain
\begin{equation*}
    \ind_p F_q(s_0,s_1) = \ind_p F_q(s_0/s_1, 1).
\end{equation*}
A summation of local indices at every element of $\pi^{-1}(q)$ shows
\begin{equation*}
    e(\pi, q) \overset{def}{=} \sum_{p \in \pi^{-1}(q)} \ind_p F_q(s_0,s_1) = \sum_{p \in \pi|_Y^{-1}(q)} \ind_p F_q(s_0/s_1,1) \overset{def}{=} e(\pi|_Y, q)
\end{equation*}
\end{proof}

\subsection{An explicit method to compute local indices and global \texorpdfstring{$\A^1$}{A1} degrees}
\label{subsec: explicit method}

Let $\pi: C \to \mathbb{P}^1_k$ be a finite morphism of smooth projective curves such that $\pi^* \Oh_{\Proj^1_k}(1)$ is relatively oriented. Let $Y$ be an open affine subscheme of $C$ which is a trivializing open subscheme of $C$ for $\pi^* \Oh_{\Proj^1_k}(1)$. In particular, we may choose $Y = \pi^{-1}(\A^1_k)$. 

Using the Euler number of the restriction $\pi|_Y: Y \to \A^1_k$, we present a method to explicitly compute the local indices of $e(\pi,q)$ for any $k$-rational point $q \in \A^1_k \subset \mathbb{P}^1_k$. The following proposition proves how the local indices of the Euler number $e(\pi|_Y, q)$ can be explicitly computed up to a unit in $k$ under certain conditions.

\begin{proposition} \label{curve_euler_number}
Let $Y$ be a finite type smooth affine scheme of dimension $n$ such that $\Oh^n_Y$ is relatively oriented over $k$. Pick a closed embedding $i_Y: Y \to \A^r_k$ for some $r$ and let $\{x_1, x_2, \cdots, x_r\}$ be coordinates of $\A^r_k$. Suppose that $Y$ is a set theoretic complete intersection over $k$, i.e. the zero set of the collection of $r-n$ polynomials $\{F_1, F_2, \cdots, F_{r-n}\}$ defines $Y$. 

Let $\mathcal{F}: Y \to \A^n_k$ be a finite morphism defined by the polynomial map $(F_{r-n+1}, \cdots, F_r)$ over $k[x_1, \cdots, x_r]$. Let $F = (F_1, F_2, \cdots, F_r): \A^r_k \to \A^r_k$ be a finite morphism such that the diagram
\begin{equation*}
    \begin{tikzcd}
        \A^r_k \arrow[r, "F"] & \A^r_k \\
        Y \arrow[r, "\mathcal{F}"] \arrow[u, "i_Y"] & \A^n_k \arrow[u, "i_{\A^n_k}"]
    \end{tikzcd}
\end{equation*}
commutes, where $i_{\A^n_k}: \A^n_k \to \A^r_k$ sends $q := (q_1, q_2, \cdots, q_n) \in \A^n_k$ to $(0, \cdots, 0, q_1, q_2, \cdots, q_n) \in \A^r_k$. Note that $\mathcal{F}$ and $F$ can be considered as global sections of $\Oh^n_Y$ and $\Oh^r_{\A^r_k}$ respectively. Moreover, given a point $q \in \A^n_k$, note that the fibers of the point $(0,\ldots,0, q) \in \A^r_k$ via $F$ are all in $Y$. Denote by $\{p_1, \cdots, p_d\}$ the elements of $\mathcal{F}^{-1}(q)$. Assume that for all $1 \leq i \leq d$'s, the fields $k(p_i)$ are separable extensions of $k$.

Pick a $k$-rational point $q \in \A^n_k(k)$. Let $\mathcal{G}$ and $G$ be global sections of $\Oh^n_{\A^n_k}$ and $\Oh^r_{\A^r_k}$ given by the monic minimal polynomials of the coordinates of $q$ and $i_{\A^n_k}(q)$ respectively, cf. Section \ref{subsec: euler number into affine}. In particular, the zero sets of $\mathcal{G}$ and $G$ are exactly $q$ and $i_{\A^n_k}(q)$ respectively. Furthermore, the zero sets of $\mathcal{F}^* \mathcal{G}$ and $F^* G$ are $\mathcal{F}^{-1}(q)$ and $F^{-1}(i_{\A^n_k}(q))$ respectively. Then for each closed point $p_i \in \mathcal{F}^{-1}(q)$ there exists a unit $\alpha_{p_i} \in k(p_i)^\times$ such that
\begin{equation*}
    \ind_{(p_i)_{k(p_i)}} \mathcal{F}^* \mathcal{G} = \langle \alpha_{p_i} \rangle \bullet \ind_{(i_Y(p_i))_{k(p_i)}} F^* G.
\end{equation*}
In particular, if $\alpha_{p_i} \in k^\times$, then 
\begin{equation*}
    \ind_{p_i} \mathcal{F}^* \mathcal{G} = \langle \alpha_{p_i} \rangle \bullet \ind_{i_{Y}(p_i)} F^* G.
\end{equation*}
Assuming that there exists some $\alpha \in k^\times$ such that $\alpha_{p_i} \in k^\times$ for all $i$'s and $\frac{\alpha}{\alpha_{p_i}}$ is a square in $k^\times$ for every $i$, 
\begin{equation*}
    e(\mathcal{F}, q) = \langle \alpha \rangle \bullet e(F, i_{\A^n_k}(q)).
\end{equation*}
\end{proposition}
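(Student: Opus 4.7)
The plan is to compare the two local indices by identifying the common local ring at $p_i$ and then tracking how the choice of presentation affects the distinguished socle element. First, I would observe that the zero locus of $F^{*}G$ in $\A^{r}_{k}$ cuts out, near $i_{Y}(p_i)$, the same closed subscheme as the zero locus of $\mathcal{F}^{*}\mathcal{G}$ in $Y$ near $p_i$. Indeed, writing $G = (x_1,\ldots,x_{r-n}, g_1(x_{r-n+1}),\ldots,g_n(x_r))$ where $g_l$ is the minimal polynomial of the $l$-th coordinate of $q$, the pullback $F^{*}G$ is the tuple $(F_1,\ldots,F_{r-n}, g_1(F_{r-n+1}),\ldots,g_n(F_r))$. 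Since the ideal $(F_1,\ldots,F_{r-n})$ cuts out $Y$, the local ring $k[x_1,\ldots,x_r]_{i_{Y}(p_i)}/(F^{*}G)$ at $i_{Y}(p_i) \in \A^{r}_{k}$ coincides with $\Oh_{Y,p_i}/(\mathcal{F}^{*}\mathcal{G})$ at $p_i \in Y$.

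Next, I would exploit that $Y$ is a smooth complete intersection to make a convenient choice of Nisnevich coordinates on $Y$ at $p_i$. Since the matrix $(\partial F_i/\partial x_j)_{1 \leq i \leq r-n,\, 1 \leq j \leq r}$ has rank $r-n$ at every point of $Y$, after permuting coordinates I may assume that the upper-left $(r-n) \times (r-n)$ block $A := (\partial F_i/\partial x_j)_{i,j \leq r-n}$ has unit determinant at $p_i$, so that $x_{r-n+1},\ldots,x_r$ furnish Nisnevich local coordinates on $Y$ at $p_i$. Using the standard trivialization of $\Oh^{n}_{Y}$ (compatible with the given relative orientation up to a global square), an implicit-function-theorem computation applied to $F_1 = \cdots = F_{r-n} = 0$ identifies the distinguished socle element of $\mathcal{F}^{*}\mathcal{G}$ at $p_i$ with
\begin{equation*}
E_{\mathcal{F}^{*}\mathcal{G}} = \prod_{l=1}^{n} g'_{l}(F_{r-n+l}) \cdot \det\bigl(D - CA^{-1}B\bigr),
\end{equation*}
where $B$, $C$, $D$ are the remaining blocks of the Jacobian of $F$.

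The distinguished socle element $E_{F^{*}G}$ for $F^{*}G$ at $i_{Y}(p_i) \in \A^{r}_{k}$, computed in the standard coordinates and trivialization of $\Oh^{r}_{\A^r_k}$, is simply $\det J_{F^{*}G}$. Factoring $g'_{l}(F_{r-n+l})$ out of the last $n$ rows and applying the Schur-complement identity for block matrices yields
\begin{equation*}
E_{F^{*}G} = \prod_{l=1}^{n} g'_{l}(F_{r-n+l}) \cdot \det A \cdot \det\bigl(D - CA^{-1}B\bigr) = (\det A) \cdot E_{\mathcal{F}^{*}\mathcal{G}}
\end{equation*}
inside the common local ring. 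Since $\det A$ is a unit there and reduces to a nonzero element of $k(p_i)$, the two admissible linear functionals $\phi, \phi'$ satisfying $\phi(E_{F^{*}G}) = \phi'(E_{\mathcal{F}^{*}\mathcal{G}}) = 1$ are related by $\phi' = \det A(p_i) \cdot \phi$, and the associated bilinear forms differ by the same scalar. Since rescaling a bilinear form by $\beta \in k(p_i)^{\times}$ multiplies its Grothendieck-Witt class by $\langle \beta \rangle$, applying $\Tr_{k(p_i)/k}$ via Proposition \ref{local_index_base_change} and letting $\alpha_{p_i}$ denote the resulting unit produces $\ind_{p_i}\mathcal{F}^{*}\mathcal{G} = \langle \alpha_{p_i}\rangle \bullet \ind_{i_{Y}(p_i)} F^{*}G$.

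Finally, once every $\alpha_{p_i}$ agrees with a single $\alpha \in k^{\times}$ modulo $(k^{\times})^{2}$, summing over $\mathcal{F}^{-1}(q) = \{p_1,\ldots,p_d\}$ and using the $\GW(k)$-module structure on the total Euler number gives
\begin{equation*}
e(\mathcal{F},q) = \sum_{i=1}^{d} \ind_{p_i} \mathcal{F}^{*}\mathcal{G} = \langle \alpha \rangle \sum_{i=1}^{d} \ind_{i_{Y}(p_i)} F^{*}G = \langle \alpha \rangle \bullet e(F, i_{\A^{n}_{k}}(q)).
\end{equation*}
The main obstacle will be the Schur-complement step, specifically verifying that the Scheja–Storch distinguished socle element for $\mathcal{F}^{*}\mathcal{G}$ on $Y$, computed intrinsically in the chosen Nisnevich coordinates with the chosen trivialization, is indeed obtained from $\det J_{F^{*}G}$ on $\A^{r}_{k}$ after dividing by $\det A$. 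This demands careful bookkeeping of how the Scheja–Storch duality pairing transforms under the closed embedding $i_{Y}: Y \hookrightarrow \A^{r}_{k}$ and under implicit change of local parameters on $Y$, so that the factor $\det A$ appearing in the comparison is unambiguously the discrepancy between the two presentations and not an artifact of one particular coordinate choice.
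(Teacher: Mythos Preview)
Your proposal is correct in outline and shares with the paper the two essential steps: identifying the local rings $\Oh_{Z,i_Y(p_i)}$ and $\Oh_{\mathcal{Z},p_i}$ via the chain
\[
\Oh_{\A^r_k,i_Y(p_i)}/(F_1,\ldots,F_r)\;\cong\;\bigl(k[x_1,\ldots,x_r]/(F_1,\ldots,F_{r-n})\bigr)_{p_i}/(F_{r-n+1},\ldots,F_r)\;\cong\;\Oh_{Y,p_i}/(\mathcal{F}^*\mathcal{G}),
\]
and then summing the resulting local identities over $\mathcal{F}^{-1}(q)$.

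Where you diverge from the paper is in how you extract the unit $\alpha_{p_i}$. You attempt an explicit computation: choose Nisnevich coordinates on $Y$ via a full-rank minor of the Jacobian of $(F_1,\ldots,F_{r-n})$, write down the distinguished socle elements on both sides in terms of Jacobians, and compare them through a Schur-complement identity, arriving at $\alpha_{p_i}\sim\det A$. The paper does none of this. It simply invokes that both $\Oh_{\mathcal{Z},p_i}$ and $\Oh_{Z,i_Y(p_i)}$ are finite complete intersection $k$-algebras (citing \cite[Lemma 27]{kwcubic}), so each has a one-dimensional socle; the isomorphism $i_Y^*$ between them must therefore carry the distinguished socle element $s$ to $\alpha_{p_i}\tilde s$ for \emph{some} unit $\alpha_{p_i}$, and that scalar is exactly the discrepancy between the two bilinear forms defining the local indices. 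No Schur complement, no explicit coordinate choice, no tracking of $\det A$.

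Your route, if completed, buys an explicit formula for $\alpha_{p_i}$, which the paper's argument does not provide. But the proposition only asserts existence of $\alpha_{p_i}$, so the abstract one-line socle argument suffices and avoids precisely the bookkeeping you flag as the main obstacle. In particular, your concern about verifying that the Scheja--Storch socle element on $Y$ really equals $\det J_{F^*G}/\det A$ is legitimate: the distinguished socle element is built from the lifts $a_{ij}$ with $f_i(x)-f_i(0)=\sum a_{ij}x_j$, not from the Jacobian itself, and at non-simple zeros the Schur-complement manipulation must be done with these lifts inside the local ring rather than with derivatives. The paper's approach sidesteps this entirely.
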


\begin{proof}
The proposition can be considered as a corollary of various lemmas and propositions stated in \cite[Section 4]{kwcubic}.
Let $\text{T}Y \to Y$ be the tangent bundle of $Y$. Because $\Oh^n_Y$ is relatively oriented, we have a line bundle $L$ on $Y$ along with an isomoprhism
\begin{align*}
    \text{Hom}(\bigwedge\nolimits^n \text{T}Y, \bigwedge\nolimits^n \Oh^n_Y) \cong L^{\otimes 2}.
\end{align*}
Moreover, since all line bundles on $\mathbb{A}^r_k$ are trivial, we obtain a relative orientation of $\Oh^r_{\A^r_k}$ by restriction:
\begin{align*}
    \text{Hom} (\bigwedge\nolimits^r \text{T}\A^r_k, \bigwedge\nolimits^r \Oh^r_{\A^r_k}) \cong \Oh_{\A^r_k}^{\otimes 2}.
\end{align*}
Let $\mathcal{Z} \subset Y$ be the zero locus of $\mathcal{F}^* \mathcal{G}$, and $Z \subset \A^r$ be the zero locus of $F^* G$. In particular, $\mathcal{Z} = \mathcal{F}^{-1}(q)$ and $Z = F^{-1}(i_{\A^n_k}(q))$. Furthermore, the morphism $i_Y|_{\mathcal{Z}}: \mathcal{Z} \to Z$ is an isomorphism.

Pick a fiber point $p \in \mathcal{Z}$. Let $\phi_Y: \mathcal{U} \to \A^n_k = \Spec k[y_1, y_2, \cdots, y_n]$ be a choice of Nisnevich local coordinates of $Y$ at $p$, and let $\phi_{\A^r_k}: U \to \A^r_k = \Spec k[\chi_1, \chi_2, \cdots, \chi_r]$ be a choice of Nisnevich local coordinates of $\A^r_k$ at $i_Y(p)$. By sufficiently shrinking the open subschemes $\mathcal{U}$ and $U$, we can choose compatible trivializations of $\Oh^n_Y$ and $\Oh^r_{\A^r_k}$. The proof of \cite[Lemma 24]{kwcubic} implies that the following two diagrams are commutative:
\begin{equation*}
\begin{tikzcd}
    & k[\chi_1, \chi_2, \cdots, \chi_r] \arrow[d, twoheadrightarrow] \arrow[dl] \\
    \Oh_{\A^r_k, i_Y(p)} \arrow[r, twoheadrightarrow] & \Oh_{Z,i_Y(p)}
\end{tikzcd}
\end{equation*}
\begin{equation*}
\begin{tikzcd}
    & k[y_1, y_2, \cdots, y_n] \arrow[d, twoheadrightarrow] \arrow[dl] \\
    \Oh_{Y, p} \arrow[r, twoheadrightarrow] & \Oh_{\mathcal{Z}, p}
\end{tikzcd}
\end{equation*}

By exactness of localization, the surjection 
$$i_Y^*: k[x_1, x_2, \cdots, x_r] \to k[x_1, x_2, \cdots, x_r] / (F_1, F_2, \cdots, F_r)$$ 
associated to the closed embedding $i_Y:Y \to \A^r_k$, induces
\begin{align*}
    \Oh_{Z,i_Y(p)} & \cong \Oh_{\A^r_k, i_Y(p)} / (F_1, F_2, \cdots, F_r) \\
    & \cong k[x_1, x_2, \cdots, x_r]_{i_Y(p)} / (F_1, F_2, \cdots, F_r) \\
    & \cong \left( k[x_1, x_2, \cdots, x_r]/(F_1, \cdots, F_{r-n}) \right)_{i_Y(p)} / (F_{r-n+1}, F_{r-n+2}, \cdots, F_r) \\
    & \cong \Oh_{Y, p} / \left( (F_{r-n+1}, F_{r-n+2}, \cdots, F_r) \right) \\
    & \cong \Oh_{\mathcal{Z}, p}.
\end{align*}

Given a closed point $p$ such that the field extension $k(p)/k$ is separable, the canonical point $p_{k(p)}$ of $Y_{k(p)}$ is a $k(p)$-rational point of $Y_{k(p)}$. By the discussion in Definition \ref{def:localind}, the local index $\ind_{p_{k(p)}} \mathcal{F}^* G$ is the equivalence class of the symmetric bilinear form $\tilde{\beta}: \Oh_{\mathcal{Z}_{k(p)}, p_{k(p)}} \times \Oh_{\mathcal{Z}_{k(p)}, p_{k(p)}} \to k(p), \quad (x,y) \mapsto \tilde{\eta}(xy)$ where $\tilde{\eta}: \Oh_{\mathcal{Z}_{k(p)}, p_{k(p)}} \to k(p)$ is any $k(p)$-linear map sending the distinguished socle element of $\Oh_{\mathcal{Z}_{k(p)}, p_{k(p)}}$ to $1$. Similarly, $\ind_{(i_Y(p))_{k(p)}}F^* G$ is the equivalence class of the symmetric bilinear form $\beta: \Oh_{Z_{k(p)}, (i_Y(p))_{k(p)}} \times \Oh_{Z_{k(p)}, (i_Y(p))_{k(p)}} \to k(p), \quad (x,y) \mapsto \eta(xy)$ where $\eta: \Oh_{Z_{k(p)}, (i_Y(p))_{k(p)}} \to k(p)$ sends the distinguished socle element of $\Oh_{Z_{k(p)}, (i_Y(p))_{k(p)}}$ to $1$. We recall that \cite[Lemma 27]{kwcubic} shows that both $\Oh_{\mathcal{Z}_{k(p)}, p_{k(p)}}$ and $\Oh_{Z_{k(p)},(i_Y(p))_{k(p)}}$ are finite complete intersections. The isomorphism $i_Y^*: \Oh_{Z_{k(p)},(i_Y(p))_{k(p)}} \to \Oh_{\mathcal{Z}_{k(p)}, p_{k(p)}}$ thus sends the socle element $s$ to $\alpha_p \tilde{s}$ for some $\alpha_p \in k(p)^\times$. Therefore,
\begin{equation*}
    \ind_{p_{k(p)}} \mathcal{F}^* \mathcal{G} = \langle \alpha_{p} \rangle \bullet \ind_{(i_{Y}(p))_{k(p)}} F^* G.
\end{equation*}

If we further assume that $\alpha_p \in k^\times$, then Proposition \ref{local_index_base_change} implies
\begin{align*}
    \ind_p \mathcal{F}^* \mathcal{G} &= \Tr_{k(p)/k} \ind_{p_{k(p)}} \mathcal{F}^* \mathcal{G} \\
    &=  \Tr_{k(p)/k} \left( \langle \alpha_{p} \rangle \bullet \ind_{(i_{Y}(p))_{k(p)}} F^* G \right) \\
    &= \langle \alpha_p \rangle \bullet \Tr_{k(p)/k} \ind_{(i_{Y}(p))_{k(p)}} F^* G \\
    &= \langle \alpha_p \rangle \bullet \ind_{i_{Y}(p)} F^* G.
\end{align*}

Now let $\mathcal{F}^{-1}(q) = \{p_1, p_2, \cdots, p_d\}$. Further suppose that for all $p_i$'s the associated units $\alpha_{p_i}$ are elements of $k^\times$ and are all equal to $\alpha$ up to multiplication by a square in $k$. We show that
$$e(\mathcal{F},q) = \langle \alpha \rangle \bullet e(F, i_{\A^n_k}(q)).$$
Recall that $q \in  \A^n_k$. Choose $g_{r-n+1}(x_{r-n+1}),\ldots,g_r(x_r)$ to be nonzero (monic) minimal polynomials of the coordinates of $q$ similarly as in Section \ref{subsec: euler number into affine}. These $n$ polynomials together correspond to a section $\sigma_q$ of $\Oh^n_{\A^n_k}$. Let $\mathcal{G} = \mathcal{F}^* \sigma_q$. Similarly, the $n$ polynomials $x_1,\ldots,x_{r-n},g_{r-n+1}(x_{r-n+1}),\ldots,g_r(x_r)$ correspond to a section $\sigma'_q$ of $\Oh^r_{\A^r_k}$. Let $G = F^* \sigma'_q$. Note that $\mathcal{G}$ and $G$ are global sections of $\Oh_Y^n$ and $\Oh^r_{\A^r_k}$ whose zero sets are $\mathcal{F}^{-1}(q)$ and $F^{-1}(q)$ respectively.

Furthermore, choose $\mathcal{G}$ and $G$ to be the global sections of $\Oh^n_Y$ and $\Oh^r_{\A^r_k}$ 
respectively Then
\begin{align*}
    e(\mathcal{F}, q) 
    &\overset{\mathrm{def}}{=} e(\Oh^n_Y, \mathcal{F}^* \mathcal{G}) \\
    &\overset{\mathrm{def}}{=} \sum_{p_i \in \mathcal{F}^{-1}(q)} \ind_{p_i} \mathcal{F}^* \mathcal{G} \\
    &= \sum_{p_i \in \mathcal{F}^{-1}(q)} \langle \alpha \rangle \bullet \ind_{i_Y(p_i)} F^* G \\
    &=  \sum_{i_Y(p_i) \in F^{-1}(i_{\A^n_k}(q))} \langle \alpha \rangle \bullet \ind_{i_Y(p_i)} F^* G \\
    &\overset{\mathrm{def}}{=} \langle \alpha \rangle \bullet e(\Oh^r_{\A^r_k}, F^* G) \\
    &\overset{\mathrm{def}}{=} \langle \alpha \rangle \bullet e(F, i_{\A^n_k}(q)).
\end{align*}

\end{proof}

Combining Proposition \ref{curve_euler_number_2} and \ref{curve_euler_number}, we obtain:
\begin{corollary} \label{cor:curve_euler_number}
    Let $C$ be a smooth projective curve over any field $k$. Let $\pi = [s_0:s_1]: C \to \Proj^1_k$ be a finite morphism such that $\pi^* \Oh_{\Proj^1_k}(1)$ is relatively oriented. Let $Y = \pi^{-1}(\A^1_k)$ with a choice of a closed embedding $i_Y: Y \to \A^r_k$ in which $Y$ is a set theoretic complete intersection, i.e. a set of $r-1$ polynomials $\{F_1, F_2, \cdots, F_{r-1}\}$ defines $Y$. Let $\{x_1,x_2,\cdots,x_r\}$ be coordinates of $\A^r_k$.
    
    Pick a $k$-rational point $q \in \A^1_k \subset \Proj^1_k$. Let $F_q(X, Y)$ be the monic minimal polynomial of $q$, where $X, Y$ are the coordinates of $\Proj^1_k$ and $\A^1_k$ has the coordinates $\frac{X}{Y}$. Define $F: \A^r_k \to \A^r_k$ to be the polynomial map $F = (F_1, \cdots, F_{r-1}, \pi|_Y)$. In particular, the following diagram commutes:
    \begin{equation*}
        \begin{tikzcd}
                    \A^r_k \arrow[r, "F"] & \A^r_k \\
        Y \arrow[r, "\pi|_Y"] \arrow[u, "i_Y"] \arrow[d, "i"] & \A^1_k \arrow[u, "i_{\A^n_k}"] \arrow[d, "i"] \\
        C \arrow[r, "\pi"] & \Proj^1_k
        \end{tikzcd}.
    \end{equation*}
    Then for each closed point $p \in \pi^{-1}(q)$, there exists a unit $\alpha_p \in k(p)^\times$ such that
    \begin{equation*}
        \ind_{(i(p))_{k(p)}} F_q(s_0,s_1) = \langle \alpha_p \rangle \bullet \ind_{(i_Y(p))_{k(p)}} F^* (x_1,x_2,\cdots,x_{r-1},F_q(x_r,1)).
    \end{equation*}
    If $\alpha_p \in k^\times$ for all the closed points $p \in \pi^{-1}(q)$, then
    \begin{equation*}
        \ind_{i(p)} F_q(s_0,s_1) = \langle \alpha_p \rangle \bullet \ind_{i_Y(p)} F^* (x_1,x_2,\cdots,x_{r-1},F_q(x_r,1)).
    \end{equation*}
\end{corollary}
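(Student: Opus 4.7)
The plan is to stack Proposition \ref{curve_euler_number_2} on top of Proposition \ref{curve_euler_number}; in effect the corollary is the conjunction of the two statements applied in the special case $n=1$, so the proof should be essentially administrative.

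First I would invoke Proposition \ref{curve_euler_number_2} to rewrite the local index on the projective curve $C$ as a local index on the open affine $Y = \pi^{-1}(\A^1_k)$. There the line bundle $\pi^*\Oh_{\Proj^1_k}(1)$ trivializes and the global section $F_q(s_0,s_1)$ becomes $F_q(s_0/s_1,1)$; identifying $\A^1_k = \Spec k[x_r]$ via the coordinate $x_r = s_0/s_1$, this section is exactly $(\pi|_Y)^*(F_q(x_r,1))$. The proposition then yields
\begin{equation*}
    \ind_{i(p)} \pi^* F_q(s_0,s_1) = \ind_p (\pi|_Y)^*(F_q(x_r,1))
\end{equation*}
for each $p \in \pi^{-1}(q)$.

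Second I would apply Proposition \ref{curve_euler_number} with $n=1$ to compare this local index with one defined on $\A^r_k$. The hypotheses match verbatim: $Y$ is smooth affine of dimension one, set-theoretically cut out by $F_1,\ldots,F_{r-1}$ inside $\A^r_k$, and $\pi|_Y$ is the finite morphism $\mathcal{F} = (F_r) = (\pi|_Y^*)$; the commutative square displayed in the corollary is precisely the one required by Proposition \ref{curve_euler_number}. Choosing the auxiliary section $\mathcal{G}$ to be $F_q(x_r,1)$ on $\A^1_k$ and $G$ to be $(x_1,\ldots,x_{r-1},F_q(x_r,1))$ on $\A^r_k$, which are the minimal polynomials of the coordinates of $q$ and $i_{\A^n_k}(q) = (0,\ldots,0,q)$ respectively, the proposition produces a unit $\alpha_p \in k^\times$ satisfying
\begin{equation*}
    \ind_p (\pi|_Y)^*(F_q(x_r,1)) = \langle \alpha_p \rangle \bullet \ind_{i_Y(p)} F^*(x_1,\ldots,x_{r-1},F_q(x_r,1)).
\end{equation*}

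Chaining these two equalities delivers the claim. There is no substantive obstacle; the only thing that needs care is the bookkeeping required to confirm that the embeddings $i: Y \hookrightarrow C$ and $i_Y: Y \hookrightarrow \A^r_k$ in the corollary's setup are compatible with the hypotheses of the two cited propositions, and that the polynomial defining $\mathcal{F}$ in Proposition \ref{curve_euler_number} is consistent with the last coordinate of $F$ in the corollary's statement. Because both underlying propositions have already verified that local indices are independent of the choice of Nisnevich coordinates and compatible trivializations, no new analytic input is required and the argument is purely a composition of the two previous results.
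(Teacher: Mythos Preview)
Your proposal is correct and follows exactly the paper's approach: the paper presents this corollary simply as the result of ``Combining Proposition \ref{curve_euler_number_2} and \ref{curve_euler_number}'' without further elaboration, and your argument spells out precisely that combination in the case $n=1$.
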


Using Proposition \ref{euler double ramify} and Corollary \ref{cor:curve_euler_number}, we are able to obtain under which conditions can we guarantee the equality between global Euler $\A^1$ degrees and the global $\A^1$ degrees of morphisms $\pi: C \to \Proj^1_k$.

\begin{theorem} \label{thm:two_notions_agree}
Let $C$ be a smooth projective curve over a field $k$. Suppose $\pi: C \to \Proj^1_k$ is a finite morphism which satisfies the following conditions:
\begin{enumerate}[label=(\alph*)]
    \item \label{thm:tna_double_ramify} there exists a $k$-rational point $q \in \Proj^1_k$ such that all closed points in $\pi^{-1}(q)$ doubly ramify and have separable residue field over $k$, and
    \item \label{thm:tna_complete_intersection} there exists a positive number $r > 0$ such that the open subscheme $Y := \pi^{-1}(\A^1_k)$ is a set theoretic complete intersection in $\A_k^r$.
\end{enumerate}
If $\pi^* \Oh_{\mathbb{P}^1}(1)$ is relatively oriented, then the global Euler $\A^1$ degree of $\pi$ is 
\begin{equation*}
    \EAdeg \pi = \frac{\deg \pi}{2} \left( \langle 1 \rangle + \langle -1 \rangle \right).
\end{equation*}
If $\pi$ is relatively oriented, i.e. the pullback of the line bundle $\pi^* \Oh_{\mathbb{P}^1}(2) = \pi^* T\mathbb{P}^1$ is relatively oriented, then the global $\A^1$ degree of $\pi$ is 
\begin{equation*}
    \Adeg \pi = \frac{\deg \pi}{2} \left( \langle 1 \rangle + \langle -1 \rangle \right).
\end{equation*}
In particular, if both line bundles are relatively oriented, then the two notions of global $\A^1$ degrees of $\pi$ are equal to the integer multiple of the hyperbolic element in $\GW(k)$.
\end{theorem}
\begin{proof}
Proposition \ref{A1 degree double ramify} implies that if $\pi^* \Oh_{\mathbb{P}^1}(1)$ is relatively oriented, then the global Euler $\A^1$ degree of $\pi$ is an integer multiple of the hyperbolic element.
\begin{equation*}
    \EAdeg \pi = \sum_{p \in \pi^{-1}(q)} [k(p):k] \left( \langle 1 \rangle + \langle -1 \rangle \right) = \frac{\deg \pi}{2} \left( \langle 1 \rangle + \langle -1 \rangle \right).
\end{equation*}

To compute the global $\A^1$ degree of the morphism $\pi$, we use Definition \ref{sum_local_A1} to express it as a sum of local $\A^1$ degrees of $\pi$ given a $k$-rational point $q \in \mathbb{P}^1$:
\begin{equation*}
    \Adeg \pi = \sum_{p \in \pi^{-1}(q)} \Adeg_p \pi.
\end{equation*}
The notation $\Adeg_p \pi$ denotes the local $\A^1$ degree of $\pi$ at each of the closed points $p \in \pi^{-1}(q)$. The morphism $\pi$ induces the map of tangent bundles $T\pi: TC \to T\mathbb{P}^1$, which can be considered as an element of $\text{Hom}(TC, \pi^* T\mathbb{P}^1)(C)$. Choose a suitable Nisnevich local coordinates $\phi: \mathcal{U} \to \A^1_k$ which induces a trivialization of the tangent bundle $TC$. With respect to this trivialization, Theorem \ref{localEKL} implies that the local $\A^1$ degree at $p$ can be identified with the trace of the EKL class over the field extension $k(p)/k$:
\begin{equation*}
    \Adeg_p \pi = \Tr_{k(p)/k} \; w_0(T\pi(p)).
\end{equation*}
The EKL class $w_0(T\pi(p))$ is induced from the distinguished socle element of the local ring $\Oh_{\mathcal{Z},p_{k(p)}}$, where $Z$ is the zero locus of $\pi$. 

Using condition \ref{thm:tna_complete_intersection} of the theorem, we can find a set of $r-1$ polynomials $\{F_1, F_2, \cdots, F_{r-1}\}$ which cuts out $Y$ as a closed subscheme of $\A^r_k$. Let $q \in \A^1_k \subset \Proj^1_k$ be a $k$-rational point. As before, denote by $F_q(X, Y)$ the monic minimal polynomial of $q$, with $X, Y$ the coordinates of $\Proj^1_k$ and $\frac{X}{Y}$ the coordinate of $\A^1_k$ and write $\pi = [s_0:s_1]$. 

Let $F: \A^r_k \to \A^r_k$ be the polynomial map $F = (F_1, \cdots, F_{r-1}, \pi|_Y)$ so that the diagram from the statement of Corollary \ref{cor:curve_euler_number} commutes:
    \begin{equation*}
        \begin{tikzcd}
                    \A^r_k \arrow[r, "F"] & \A^r_k \\
        Y \arrow[r, "\pi|_Y"] \arrow[u, "i_Y"] \arrow[d, "i"] & \A^1_k \arrow[u, "i_{\A^n_k}"] \arrow[d, "i"] \\
        C \arrow[r, "\pi"] & \Proj^1_k
        \end{tikzcd}.
    \end{equation*}
Corollary \ref{cor:curve_euler_number} then shows that there exists a unit $\beta_p \in k(p)^\times$ such that the embedding $i_Y: Y \to \A^r_k$ induces
\begin{align*}
    \Adeg_p \pi &= \Tr_{k(p)/k} \; w_0(T\pi(p)) \\
    &= \Tr_{k(p)/k} \left( \langle \beta_p \rangle \bullet \ind_{(i_Y(p))_{k(p)}} F^* (x_1,\cdots,x_{r-1},F_q(x_r,1)) \right).
\end{align*}

Condition \ref{thm:tna_double_ramify} implies that for every such $p \in \pi^{-1}(q)$,
\begin{align*}
    \ind_{(i_Y(p))_{k(p)}} F^* (x_1,\cdots,x_{r-1},F_q(x_r,1)) = \langle 1 \rangle + \langle -1 \rangle.
\end{align*}
Because the hyperbolic element is invariant with respect to multiplication by elements of form $\langle a \rangle$ for any $a \in k(p)^\times$, we have
\begin{align*}
    \Adeg_p \pi &= \Tr_{k(p)/k} \left( \langle \beta_p \rangle \bullet \ind_{(i_Y(p))_{k(p)}} F^* (x_1,\cdots,x_{r-1},F_q(x_r,1)) \right) \\
    &= \Tr_{k(p)/k} \left( \langle 1 \rangle + \langle -1 \rangle \right) \\
    &= [k(p):k] \left( \langle 1 \rangle + \langle -1 \rangle \right).
\end{align*}
This implies
\begin{equation*}
    \Adeg \pi = \sum_{p \in \pi^{-1}(q)} \Adeg_p \pi = \sum_{p \in \pi^{-1}(q)} [k(p):k] \left( \langle 1 \rangle + \langle -1 \rangle \right) = \frac{\deg \pi}{2} \left( \langle 1 \rangle + \langle -1 \rangle \right).
\end{equation*}
Hence, both degrees are equal to integer multiples of the hyperbolic element of $\GW(k)$.
\end{proof}

\begin{example} \label{example:ellipticcurve}
Let $E$ be an elliptic curve with a minimal Weierstrass model given by $y^2z = x^3 - xz^2$ over any field $k$ of characteristic not equal to $2$ and $3$. Let $\pi: E \to \Proj^1_k$ be the double cover defined by the projection $[x:y:z] \mapsto [x:z]$. We demonstrate that
\begin{equation*}
    \EAdeg \pi = \langle 1 \rangle + \langle -1 \rangle = \Adeg \pi.
\end{equation*}
Recall that $\pi^* \Oh(1) \cong \Oh_E(2 \infty)$, cf. \cite[The explanation before Theorem IV.4.1]{hartshorne}. Moreover, the tangent bundle $TE$ is trivial, so $\Oh_E(2\infty)$ is relatively orientable over any field $k$:

$$\operatorname{Hom}(\det T E, \det \pi^* \Oh_{\mathbb{P}^1_k}(1)) \cong \operatorname{Hom}(\Oh_{E}, \Oh_{E}(2 \infty)) \cong \Oh_{E}(2 \infty) \cong \Oh_{E}(\infty)^{\otimes 2}.$$ 

The morphism $\pi$ is doubly ramified at the fiber of $0 \in \mathbb{P}^1_k(k)$. Hence, Proposition \ref{A1 degree double ramify} shows that
\begin{equation*}   
    \EAdeg \pi = \langle 1 \rangle + \langle -1 \rangle.
\end{equation*}

Let $\tilde{E}$ be the affine part of the elliptic curve $E$ at $z = 1$. A similar argument as before shows that $\pi^* T \mathbb{P}^1_k = \pi^* \Oh_{\mathbb{P}^1_k}(2)$ is relatively orientable over any field $k$. Consider the map $F: \A^2_k \to \A^2_k$ defined by $(x,y) \mapsto (y^2 - x^3 + x, x)$. We can calculate the Euler number $e(\Oh^2_{\A^2_k}, F)$ using the Jacobian of $F$, see \cite[Proposition 15]{kwEKL} and the discussion after \cite[Corollary 31]{kwcubic}. The Jacobian of $F$ is given by
\begin{equation*}
    \text{det} \begin{pmatrix} -3x^2 + 1 & 2y \\ 1 & 0 \end{pmatrix} = -2y.
\end{equation*}
The quadratic form $\beta: R \times R \to k$ associated to $e(\Oh^2_{\A^2_k}, F)$ is defined over the ring $R = k[x,y]/(y^2 - x^3 + x, x) \cong k[y]/(y^2)$. Recall that $\beta(a,b) = \eta(ab)$ for some k-linear morphism $\eta: R \to k$ such that $\eta(-2y) = \text{dim}_k R = 2$. Hence, the quadratic form $\beta$ is represented by
\begin{equation*}
    \begin{pmatrix} 0 & -1 \\ -1 & 0 \end{pmatrix},
\end{equation*}
which is equivalent to the class $\langle 1 \rangle + \langle -1 \rangle$ in the Grothendieck-Witt ring $GW(k)$, by Lemma \ref{bezoutlemma}. Recall that the hyperbolic element is invariant under multiplication by the class $\langle \alpha \rangle$. Using Corollary \ref{cor:curve_euler_number} and suitably choosing Nisnevich coordinates, we can hence show that there exists a unit $\alpha \in k^\times$ such that
\begin{align*}
    \EAdeg \pi &= e(\pi^* \Oh_{\Proj^1_k}(1), x) = e(\Oh_{\tilde{E}}, x) = \ind_{(0,0) \in \tilde{E}} x \\
    &= \langle \alpha \rangle \bullet \; \ind_{(0,0) \in \A^2_k} F = \langle \alpha \rangle \bullet e(\Oh_{\A^2_k}^2, F) \\
    &= \Adeg \pi.
\end{align*}
\end{example}

\section{Covering maps of modular curves} \label{sec: covering maps}

In this section, we compute both variants of the global $\A^1$ degrees of covering maps from coarse moduli schemes of elliptic curves with level structures to $X(1)$ under certain conditions. Using Proposition \ref{A1 degree double ramify}, we show that both the global Euler $\A^1$ degree and the global $\A^1$ degree of these covering maps are equivalent to hyperbolic elements in $\GW(k)$.

The below discussion on $\Gamma$-structures on elliptic curves and coarse moduli schemes of these $\Gamma$-structures is based on content in \cite{km85}.

By an elliptic curve over a scheme $S$, we mean a proper smooth curve over $S$ with geometrically connected fibers all of genus one and with a section $0$.

\begin{definition}
Let $E/S$ be an elliptic curve, and let $N \geq 1$ be an integer. 
\begin{enumerate}
    \item \cite[3.1]{km85} A $\Gamma(N)$-structure on $E / S$ is a group homomorphism $\phi:(\mathbb{Z} / N\mathbb{Z})^2 \rightarrow E[N](S)$ such that the Cartier divisor $\sum_{a, b \pmod N}[\phi(a, b)]$ equals $E[N]$. 

    \item \cite[3.2]{km85} A $\Gamma_1(N)$-structure on $E / S$ is a group homomorphism $\phi:\mathbb{Z} / N\mathbb{Z} \rightarrow E[N](S)$ such that the Cartier divisor $\sum_{a \pmod N}[\phi(a)]$ is a subgroup scheme of $E[N]$. 

    \item \cite[3.3]{km85} A $\Gamma_0(N)$-structure on $E/S$ is a finite flat subgroup scheme $C$ of $E[N]$ that is locally free of rank $N$ which is cyclic in the sense that $C$ admits a generator locally f.p.p.f. on $S$.
\end{enumerate}
We write $(E, (P,Q))$, $(E, P)$, and $(E, C)$ to respectively represent $\Gamma(N)$, $\Gamma_1(N)$, and $\Gamma_0(N)$-structures along with their underlying elliptic curves --- $(P,Q)$ in $(E, (P,Q))$ is the pair $(\phi(1,0), \phi(0,1))$, $P$ in $(E, P)$ is the image $\phi(1)$, and $C$ in $(E, C)$ is the subgroup scheme.
\end{definition}

Let $(\mathrm{Ell})$ denote the category whose objects are are elliptic curves $E/S$ over variable schemes $S$ and whose morphisms $E'/S' \to E/S$ are cartesian diagrams

\begin{equation}
\label{diagram:cartesian}
\begin{tikzcd}
E' \ar[r] \ar[d] & E \ar[d] \\
S' \ar[r] & S,
\end{tikzcd}
\end{equation}

cf. \cite[4.1]{km85}. For a ring $R$, let $(\mathrm{Ell}/R)$ denote the category of whose objects are elliptic curves over variable $R$-schemes and whose morphisms are cartesian diagrams \eqref{diagram:cartesian} whose bottom arrows are morphisms over $R$ \cite[4.13]{km85}. A moduli problem for $(\mathrm{Ell})$ is a contravariant functor $(\mathrm{Ell}) \to (\mathrm{Sets})$ \cite[4.2]{km85}. Similarly, a moduli problem for $(\mathrm{Ell}/R)$ is a contravariant functor $(\mathrm{Ell}/R) \to (\mathrm{Sets})$ \cite[4.13]{km85}. Given a moduli problem $\mathcal{P}$ for $(\mathrm{Ell})$, there is a base change moduli problem $\mathcal{P} \otimes R$ on $(\mathrm{Ell}/R)$ \cite[4.13]{km85}.

For each $\Gamma \in \{\Gamma(N), \Gamma_1(N), \Gamma_0(N)\}$, there is a moduli problem $[\Gamma]$ sending $E/S$ to the set of $\Gamma$-structures on $E/S$ \cite[5.1]{km85}. Each of these moduli problems is relatively representable over $(\mathrm{Ell})$, i.e. for every elliptic curve $E/S$ the functor on $(\mathrm{Sch}/S)$ defined by $T \mapsto [\Gamma](E_T/T)$ is representable by an $S$-scheme, and in fact its base change to $\mathbb{Z}[1/N]$ is a finite etale relatively representable moduli problem over $(\mathrm{Ell}/\mathbb{Z}[1/N])$, see \cite[Theorem 5.1.1]{km85}. In particular, $[\Gamma] \otimes \mathbb{Z}[1/N]$ is affine over $(\mathrm{Ell}/\mathbb{Z}[1/N])$ and hence has an associated coarse moduli scheme $M(\Gamma)$ \cite[8.1.1]{km85}. We denote these coarse moduli schemes of $[\Gamma(N)]$, $[\Gamma_1(N)]$, and $[\Gamma_0(N)]$ by $Y(N)$, $Y_1(N)$, and $Y_0(N)$ respectively. Writing $Y$ for the coarse moduli scheme of $\Gamma \in \{\Gamma(N), \Gamma_1(N), \Gamma_0(N)\}$, there is a $j$-morphism $Y \to Y(1) = \operatorname{Spec}(R[j])$ induced by the natural morphism $[\Gamma] \to [\Gamma(1)]$ of moduli problems forgetting the $\Gamma$-structure on the given elliptic curve $E/S$ \cite[8.2.1]{km85}.

In fact, $M(\Gamma)$ has a compactification $\overline{M}(\Gamma)$ constructed by \enquote{normalizing near infinity} \cite[8.6.3]{km85}. We denote $X(N)$, $X_1(N)$ and $X_0(N)$ such compactifications of $Y(N)$, $Y_1(N)$, and $Y_0(N)$ respectively. Writing $X$ for the compactification of $Y$, the $j$-morphism $Y \to Y(1)$ extends to a $j$-morphism $X \to X(1) = \mathbb{P}^1_j$.

Given a relatively representable moduli problem $\mathcal{P}$ on $(\mathrm{Ell}/R)$ and a group $G$ acting on $\mathcal{P}$, there is a notion of the quotient $\mathcal{P}/G$ of $\mathcal{P}$ by $G$ (\cite[7.1.2]{km85}). In fact, $\mathcal{P}$ always has a quotient by $G$ whenever $\mathcal{P}$ is affine over $(\mathrm{Ell}/R)$ and $G$ is finite (\cite[Theorem 7.3.1]{km85}). Note that the group action of $G$ induces a group action on $M(\mathcal{P})$ because $M(\mathcal{P})$ is constructed functorially. In fact, $M(\mathcal{P}) / G$ and $M(\mathcal{P} / G)$ are naturally isomorphic \cite[Lemma 8.1.5]{km85}.

For a given integer $N \geq 1$, the group $\operatorname{GL}(2, \mathbb{Z}/N\mathbb{Z})$ acts on $[\Gamma(N)]$ on the right as follows: an element $\begin{pmatrix} a & b \\ c & d \end{pmatrix} \in \operatorname{GL}(2,\mathbb{Z}/N\mathbb{Z})$ acts by sending a $\Gamma(N)$-structure $(P,Q)$ on an elliptic curve $E/S$ to $(P,Q) \begin{pmatrix} a & b \\ c & d \end{pmatrix} = (aP+cQ, bP+dQ)$. Moreover, $(\mathbb{Z}/N\mathbb{Z})^\times$ acts on $[\Gamma_1(N)]$; an element $a \in (\mathbb{Z}/N\mathbb{Z})^\times$ sends a $\Gamma_1(N)$-structure $P$ on an elliptic curve $E/S$ to $aP$. These actions describe how to regard some of these moduli problems as quotients of others.

\begin{theorem}[\cite{km85}, Theorem 7.4.2] \label{thm:moduli_problems_as_quotients_of_one_another}
Let $N \geq 1$ be an integer.

\begin{enumerate}

\item For any divisor $d$ of $N$, the natural map 
\begin{align*}
[\Gamma(N)] &\to [\Gamma(d)] \\
(E,(P,Q)) &\mapsto (E,((N/d)P, (N/d)Q))
\end{align*}
identifies $[\Gamma(d)]$ with the quotient of $[\Gamma(N)]$ by the principal congruence subgroup of $\operatorname{GL}_2 (\mathbb{Z}/N\mathbb{Z})$ consisting of the elements which are equivalent to the identity matrix modulo $d$.

In particular, $[\Gamma(1)]$ is the quotient of $[\Gamma(N)]$ by $\operatorname{GL}_2(\mathbb{Z}/N\mathbb{Z})$.

\item The natural map
\begin{align*}
[\Gamma(N)] &\to [\Gamma_1(N)] \\
(E,(P,Q)) &\mapsto (E,P)
\end{align*}
identifies $[\Gamma_1(N)]$ with the quotient of $[\Gamma(N)]$ by the \enquote{semi-Borel} subgroup of $\operatorname{GL}_2(\mathbb{Z}/N\mathbb{Z})$ consisting the elements of the form $\begin{pmatrix} 1 & * \\ 0 & * \end{pmatrix}$.

\item The natural map
\begin{align*}
[\Gamma_1(N)] &\to [\Gamma_0(N)] \\
(E, P) &\mapsto (E, \langle P \rangle)
\end{align*}
identifies $[\Gamma_0(N)]$ with the quotient of $[\Gamma_1(N)]$ by $(\mathbb{Z}/N\mathbb{Z})^\times$.

\end{enumerate}

\end{theorem}

To reiterate, there are covering maps $X(N), X_1(N), X_0(N) \to \mathbb{P}^1_j$ over $\mathbb{Z}[1/N]$. Given the above theorem and the above identification of $M(\mathcal{P})/G$ and $M(\mathcal{P}/G)$, there are also quotient maps $Y(N) \to Y_1(N)$ and $Y_1(N) \to Y_0(N)$ that in fact extend to quotient maps $X(N) \to X_1(N)$ and $X_1(N) \to X_0(N)$.

\begin{definition}\label{cover}
For any $N$, we denote by $\pi_{N}$, $\pi_{1,N}$, and $\pi_{0,N}$ the following covering maps of modular curves over $\Z[1/N]$:
\begin{align*}
    \pi_{N}&: X(N) \to X(1) \\
    \pi_{1,N}&: X_1(N) \to X(1) \\
    \pi_{0,N}&: X_0(N) \to X(1).
\end{align*}
\end{definition}

In fact, these maps factor as follows:

\begin{center}
\begin{tikzcd}
X(N) \ar[r, dashed] \ar[rrr, "\pi_N", bend left=75] &  X_1(N) \ar[r, dashed] \ar[rr, "\pi_{1,N}", bend left=50] &  X_0(N) \ar[r, "\pi_{0,N}"]  & X(1)
\end{tikzcd}
\end{center}

We also denote by $\pi_{N}$, $\pi_{1,N}$, and $\pi_{0,N}$ the respective morphisms $\pi_{N}: Y(N) \to Y(1)$, $\pi_{1,N}: Y_1(N) \to Y(1)$, and $\pi_{0,N}: Y_0(N) \to Y(1)$ as well as any of these morphisms based changed to any field of characteristic not dividing $N$.

We more precisely identify the Galois groups of the quotient maps of coarse moduli schemes suggested by Theorem \ref{thm:moduli_problems_as_quotients_of_one_another}.

\begin{lemma} \label{lem:kernel_of_actions_on_Y}
Let $N \geq 1$ be an integer. 
\begin{enumerate}
    \item The kernel of the action of $\operatorname{GL}_2(\mathbb{Z}/N\mathbb{Z})$ on $Y(N)$ is $\{ \pm I\}$. \label{lem:part:kernel_of_actions_on_Y_N}
    \item The kernel of the action of the semi-Borel subgroup of $\operatorname{GL}_2(\mathbb{Z}/N\mathbb{Z})$ on $Y(N)$ is $\{I\}$.
    \item The kernel of the action of $(\mathbb{Z}/N\mathbb{Z})^\times$ on $Y_1(N)$ is $\{\pm 1\}$.
\end{enumerate}
\begin{proof}
\begin{enumerate}
    \item  Given a relatively representable moduli problem $\mathcal{P}$ on $(\mathrm{Ell}/R)$ that is affine over $(\mathrm{Ell}/R)$ and given any algebraically closed field $\bar{K}$, the $\bar{K}$-points of $M(\mathcal{P})$ correspond to the $\bar{K}$-isomorphism classes of elliptic curves $E/\bar{K}$ with given $\mathcal{P}$-structure \cite[Lemma 8.1.3.1]{km85}. Therefore, the $\bar{K}$-points of $Y(N)$ correspond with the set of $\bar{K}$-isomorphism classes of elliptic curves $E/\bar{K}$ equipped with $\Gamma(N)$-structure. Note that the automorphism group of any elliptic curve over an algebraically closed field of $j$-invariant not $0$ or $1728$ is exactly $\{ \pm 1 \}$ \cite[Theorem III.10.1]{silverman}, so any element of $\operatorname{GL}_2(\mathbb{Z}/N\mathbb{Z})$ other than $\pm I$ acts nontrivially on any $\bar{K}$-point on $Y(N)$ whose underlying elliptic curve has $j$-invariant other than $0$ or $1728$. In particular, the kernel of the action is contained in $\{\pm I\}$. If $N \leq 2$, then $I = -I$ in $\operatorname{GL}_2(\mathbb{Z}/N\mathbb{Z})$, so the kernel is exactly $\{\pm I\}$. Now assume that $N \geq 3$.

    In contrast, the $\Gamma(N)$-structure $(E/\bar{K}, (P,Q))$ is isomorphic to $(E/\bar{K}, (-P, -Q)) = (E/\bar{K}, (P,Q)) \cdot (-I)$ because any elliptic curve $E/\bar{K}$ has an automorphism sending a point $P$ to $-P$. Therefore, the degree of any geometric fiber of $\pi_N$ is $\frac{1}{2} \#\operatorname{GL}_2(\mathbb{Z}/N\mathbb{Z})$. If the kernel of the action of $\operatorname{GL}_2(\mathbb{Z}/N\mathbb{Z})$ on $Y(N)$ were to be $\{I \}$, then the Galois group of $Y(N)$ over $Y(1)$ would be exactly $\operatorname{GL}_2(\mathbb{Z}/N\mathbb{Z})$, in which case the degree of any geometric fiber of $\pi_N$ would have to be $\# \operatorname{GL}_2(\mathbb{Z}/N\mathbb{Z})$, which is a contradiction. Hence, the kernel of the action must be $\{ \pm I\}$.
    
    \item This follows immediately from Part (\ref{lem:part:kernel_of_actions_on_Y_N}) because the semi-Borel subgroup of $\operatorname{GL}_2(\mathbb{Z}/N\mathbb{Z})$ contains $I$ but not $-I$ if $N \geq 3$ and because $I = -I$ in the semi-Borel subgroup if $N \leq 2$.

    \item We argue similarly as in Part (\ref{lem:part:kernel_of_actions_on_Y_N}). On the one hand, an element $a \in (\mathbb{Z}/N\mathbb{Z})^\times$ that is not $\pm 1$ sends the geometric point corresponding to the isomorphism class of $(E/\bar{K}, P)$ to the geometric point corresponding to the isomorphism class of $(E/\bar{K}, aP)$. The latter isomorphism class does not contain $(E/\bar{K}, P)$ whenever $E$ has $j$-invariant not $0$ or $1728$. Therefore, the kernel is contained in $\{\pm 1\}$. If $N \leq 2$, then $1 = -1$ in $(\mathbb{Z}/N\mathbb{Z})^\times$, so the kernel is exactly $\{\pm 1\}$; now assume that $N \geq 3$. 
    
    In contrast, $(E/\bar{K}, P)$ and $(E/\bar{K}, -P)$ are isomorphic, so the degree of any geometric fiber of $\pi_N$ must be $\# \frac{1}{2} (\mathbb{Z}/N\mathbb{Z})^\times$. If the kernel of the action of $(\mathbb{Z}/N\mathbb{Z})^\times$ on $Y_1(N)$ were to be $\{1\}$, then the Galois group of $Y_1(N)$ over $Y_0(N)$ would be exactly $(\mathbb{Z}/N\mathbb{Z})^\times$, in which case the degree of any geometric fiber of $\pi_1(N)$ would have to be $\# (\mathbb{Z}/N\mathbb{Z})^\times$, so the kernel of the action must be $\{ \pm 1\}$.
    
\end{enumerate}
\end{proof}
\end{lemma}

\begin{corollary} \label{cor:coarse_moduli_schemes_as_quotients_of_one_another}
Let $N \geq 1$ be an integer.

\begin{enumerate}
    \item $Y(1)$ is a quotient of $Y(N)$ by $\operatorname{GL}_2(\mathbb{Z}/N\mathbb{Z})$ and in fact the Galois group of the quotient map $Y(N) \to Y(1)$ is $\operatorname{GL}_2(\mathbb{Z}/N\mathbb{Z})/\{\pm I\}$.
    
    \item $Y_1(N)$ is a quotient of $Y(N)$ by the semi-Borel subgroup of $\operatorname{GL}_2(\mathbb{Z}/N\mathbb{Z})$ and in fact that Galois group of the quotient map $Y(N) \to Y_1(N)$ is the semi-Borel subgroup.
    
    \item $Y_0(N)$ is a quotient of $Y_1(N)$ by $(\mathbb{Z} / N\mathbb{Z})^\times$ and in fact the Galois group of the quotient map $Y_1(N) \to Y_0(N)$ is $(\mathbb{Z}/N\mathbb{Z})^\times / \{\pm 1\}$.
\end{enumerate}

\begin{proof}
These follow immediately from Lemma \ref{lem:kernel_of_actions_on_Y}, along with the identification of $M(\mathcal{P}) / G$ with $M(\mathcal{P}/G)$ whenever $\mathcal{P}$ is a relatively representable moduli problem over $(\mathrm{Ell}/R)$ as discussed before Theorem \ref{thm:moduli_problems_as_quotients_of_one_another}.
\end{proof}

\end{corollary}

\begin{proposition}\label{coverdeg}
Let $N$ be a positive integer. The degrees of $\pi_{N}$, $\pi_{1,N}$, and $\pi_{0,N}$ are:
\begin{align*}
    \deg \pi_N &= \begin{cases} N^4 \prod_{p | N, \; p \; \text{prime}} \left(1 - \frac{1}{p^2} \right) \left(1 - \frac{1}{p}\right) &\text{ if } N \neq 2 \\
    6 &\text{ if } N = 2 \end{cases} \\
    \deg \pi_{1,N} &= \frac{1}{2} N^2 \prod_{p | N, \; p \; \text{prime}} \left(1 - \frac{1}{p^2} \right)\\
    \deg \pi_{0,N} &= N \prod_{p | N, \; p \; \text{prime}} \left(1 + \frac{1}{p} \right)
\end{align*}
\end{proposition}
\begin{proof}
The first statement of the proposition follows from the fact that the kernel of the action of $\textrm{GL}_2(\mathbb{Z}/N\mathbb{Z})$ on $Y(N)$ is the element $\pm \begin{pmatrix} 1 & 0 \\ 0 & 1 \end{pmatrix}$ by Lemma \ref{lem:kernel_of_actions_on_Y}
\end{proof}

The ramification indicies of the fibers at $j = 1728$ play a crucial role in computing the global $\A^1$ degrees of the covering maps.

\begin{proposition}\label{ram}
Let $N$ be be a positive integer, and base change $\pi_N$, $\pi_{1,N}$, and $\pi_{0,N}$ to a field $K$ of characteristic not dividing $6N$.
\begin{enumerate}
    \item \label{ram:pi_N}
    For any $N \geq 2$, all the fibers of $\pi_N$ at $j = 1728$ are doubly ramified.
    
    \item \label{ram:pi_1_N}
     For any $N \geq 3$, all the fibers of $\pi_{1,N}$ at $j = 1728$ are doubly ramified.
     
    \item Let $N$ be a positive integer such that either $4 \mid N$ or there exists a prime $p \equiv 3 \; \text{mod} \; 4$ such that $p \mid N$. Then all the fibers of $\pi_{0,N}$ at $j = 1728$ are doubly ramified.
    
    \item Let $N \geq 3$ be a positive integer that does not satisfy both conditions in (3). Then there are $2^{\omega_\text{odd}(N)}$ fibers of $\pi_{0,N}$ at $j = 1728$ which are unramified where $\omega_{\text{odd}}(N)$ denotes the number of distinct odd prime factors of $N$. All other fibers are doubly ramified.
\end{enumerate}
\end{proposition}
\begin{proof}

Since ramifications of fibers are unaffected by geometric base change, we treat all fibral points geometrically in this proof.
\begin{enumerate}

\item Recall that the $\bar{K}$-points of $Y(N)$ correspond to the $\bar{K}$-isomorphism classes of elliptic curves $E/\bar{K}$ with given $\Gamma(N)$-structure \cite[Lemma 8.1.3.1]{km85}. In particular, the fibral points of $\pi_N$ over $j = 1728$ correspond to the pairs $(E_{j=1728}/\bar{K}, (P,Q))$ where $E_{j=1728}$ is the elliptic curve of $j$-invariant $1728$ and where $(P,Q)$ is a basis of $E_{j=1728}[N](\bar{K}) \cong (\mathbb{Z}/N\mathbb{Z})^2$. Since $\operatorname{char} K \nmid 6$, $E_{j=1728}$ has automorphism group of size $4$ over $\bar{K}$ \cite[Theorem III.10.1]{silverman}. In fact, $E_{j=1728}$ is given by the explicit equation $y^2 = x^3-x$, which has the automorphism $[i]: (x,y) \mapsto (-x, iy)$ of order $4$ where $i$ is some fixed square root of $-1$ in $\bar{K}$, so the automorphism group of $E_{j=1728}/\bar{K}$ is a cyclic group of order $4$.

Let $\gamma \in \operatorname{GL}_2(\mathbb{Z}/N\mathbb{Z})$ be the matrix such that $(E_{j=1728},(P,Q))\gamma = (E_{j=1728}, ([i]P, [i]Q))$. In particular, $\gamma$ is distinct from $\pm I$ and hence acts nontrivally on $Y(N)$ and yet $(E_{j=1728}, (P,Q))\gamma$ and $(E_{j=1728}, (P,Q)$ are isomorphic $\Gamma(N)$-structures, so they correspond to the same  geometric point of $Y(N)$. In fact, since the automorphism group of $E_{j=1728}$ is cyclic of order $4$ and generated by $[i]$, the elements of $\langle \gamma \rangle$ are the only elements of $\operatorname{GL}_2(\mathbb{Z}/N\mathbb{Z})$ to act trivially on $(E, (P,Q))$. Moreover, since $\pm \begin{pmatrix} 1 & 0 \\ 0 & 1 \end{pmatrix}$ act trivially on $Y(N)$ by Lemma \ref{lem:kernel_of_actions_on_Y}, the fibral points of $\pi_N$ over $j=1728$ are all doubly ramified.

\item Given Part (\ref{ram:pi_N}), it suffices to show that the covering map $X(N) \to X_1(N)$ has no ramifications above $j=1728$ in $X(1)$. By Corollary \ref{cor:coarse_moduli_schemes_as_quotients_of_one_another}, the Galois group of the covering map $Y_1(N) \to Y(N)$ is the semi-Borel subgroup of $\operatorname{GL}_2(\mathbb{Z}/N\mathbb{Z})$. The element $\begin{pmatrix} 1 & b \\ 0 & d \end{pmatrix}$ sends a geometric point corresponding to the $\Gamma(N)$-structure $(E_{j=1728}, (P,Q))$ to the geometric point corresponding to $(E_{j=1728}, (P, bP+dQ))$. The only $\Gamma(N)$-structures isomorphic to $(E_{j=1728}, (P,Q))$ are $(E_{j=1728}, (\pm P, \pm Q))$ and $(E_{j=1728}, (\pm [i] P, \pm [i] Q))$. Given that $N \geq 3$, the only one of these $\Gamma(N)$-structures that can coincide with $(E_{j=1728}, (P, bP+dQ))$ is $(E_{j=1728}, (P,Q))$ itself and only when $\begin{pmatrix} 1 & b \\ 0 & d \end{pmatrix} = I$. Therefore, only the identity element of the semi-Borel subgroup acts trivially on the fiber $Y(N)$ over $j=1728$, so $X(N) \to X_1(N)$ has no ramifications above $j=1728$ as desired.

\item Similarly, given Part (\ref{ram:pi_1_N}), it suffices to show that the covering map $X_1(N) \to X_0(N)$ has no ramifications above $j=1728$ in $X(1)$. 

Given that $4 \mid N$ or that $N$ has a prime factor that is congruent to $3$ modulo $4$, or equivalently, that $-1$ is not a square residue modulo $N$, the only way for $(E_{j=1728}/\bar{K}, P)$ to be isomorphic to $(E_{j=1728}/\bar{K}, aP)$ is for $a = \pm 1$. Indeed, the automorphism group of $E_{j=1728}/\bar{K}$ is generated by $[i]$ as established before, so $(E_{j=1728}/\bar{K}, P)$ and $(E_{j=1728}/\bar{K}, aP)$ can only be isomorphic via some power of $[i]$; if $(E_{j=1728}/\bar{K}, [i]^jP) = (E_{j=1728}/\bar{K}, aP)$ for some $a$ and some odd $j$, then $-P$ and $a^2P$ must coincide and hence $-1 \equiv a^2 \pmod{N}$, which cannot be the case.

Therefore, the Galois group $(\mathbb{Z}/N\mathbb{Z})^\times / \{\pm 1\}$ of $Y_1(N) / Y_0(N)$ acts faithfully on the fibers over $j=1728$ in $Y(1)$ and hence these fibral points are not ramified as desired.

\item Given Part (\ref{ram:pi_1_N}), it suffices to show that there are exactly $2^{\omega_{\text{odd}}(N)}$ geometric fibral points of $j = 1728$ via $\pi_{0,N}$ whose geometric fibral points via the quotient map $X_1(N) \to X_0(N)$ are doubly ramified. The geometric fibral points of $j=1728$ via $\pi_{0,N}$ are given by $(E_{j=1728}/\bar{K}, C)$ where $C$ is a cyclic subgroup of $E_{j=1728}[N](\bar{K})$ of exact order $N$. The automorphism $[i]$ on $E_{j=1728}$ induces an automorphism of $E[N](\bar{K}) \cong (\mathbb{Z}/N\mathbb{Z})^2$ with characteristic polynomial $T^2  + 1$. 

Let $\lambda$ be a square root of $-1$ in $(\mathbb{Z}/N\mathbb{Z})^\times$; note that there are $2^{\omega_{\text{odd}}(N)}$ such $\lambda$. By Lemma \ref{lem:i_is_diagonalizable_on_E_1728_N} below, the action of $[i]$ on $E_{j=1728}[N](\bar{K})$ is diagonalizable into $\begin{pmatrix} -\lambda & 0 \\ 0 & \lambda \end{pmatrix}$ if $N$ is odd. If $N$ is even, then $N$ is divisible by $2$ but not $4$ by assumption. Decompose $E_{j=1728}[N](\bar{K})$ into $E_{j=1728}[2](\bar{K}) \times E_{j=1728}[N/2](\bar{K})$ to recognize that the action of $[i]$ on $E_{j=1728}[2](\bar{K})$ is $\begin{pmatrix} 1 & 1 \\ 0 & 1 \end{pmatrix}$ with respect to some basis of $E_{j=1728}[2](\bar{K})$ by Lemma \ref{lem:i_is_1_1_0_1_on_E_1728_2} below and that the action of $[i]$ on $E_{j=1728}[N/2](\bar{K})$ is diagonalizable into $\begin{pmatrix} \lambda & 0 \\ 0 & -\lambda \end{pmatrix}$ by Lemma \ref{lem:i_is_1_1_0_1_on_E_1728_2}. In either case, $E[N](\bar{K})$ has cyclic submodule $C_\lambda$ of order $N$ on which $[i]$ acts as $\lambda$. Moreover, any element of $E[N](\bar{K})$ on which $[i]$ acts as $\lambda$ is an element of $C_\lambda$ --- if $N$ is odd, then $E[N](\bar{K})$ decomposes as $C_\lambda \oplus C_{-\lambda}$ and $[i]$ sends $P_\lambda + P_{-\lambda} \in C_\lambda \oplus C_{-\lambda}$ to $\lambda P_\lambda - \lambda P_{-\lambda}$, which equals $\lambda(P_\lambda + P_{-\lambda})$ exactly when $P_{-\lambda} = 0$, and if $N$ is even, then decompose $E_{j=1728}[N](\bar{K})$ into $E_{j=1728}[2](\bar{K}) \times E_{j=1728}[N/2](\bar{K})$ to recognize that $[i]$ acts as $\lambda$ on precisely a cyclic subgroup of order $2$ in $E_{j=1728}[2](\bar{K})$ and a cyclic subgroup of order $N/2$ in $E_{j=1728}[N/2](\bar{K})$ and hence $[i]$ acts as $\lambda$ on precisely a cyclic subgroup of order $N$ in $E_{j=1728}[2](\bar{K})$.

For each $\lambda$, let $P_\lambda$ be some generator of $C_\lambda$. The fiber of $(E_{j=1728}, C_\lambda)$ via $X_1(N) \to X_0(N)$ is the $(\mathbb{Z}/N\mathbb{Z})^\times$-orbit of $(E, P_\lambda)$. Note that $\lambda$ sends $(E_{j=1728}, P_\lambda)$ to $(E_{j=1728}, \lambda P_\lambda) = (E_{j=1728}, [i] P_\lambda)$, which is isomorphic to $(E_{j=1728}, P_\lambda)$. In particular, $\lambda$ acts trivially on $(E_{j=1728}, P_\lambda)$. In fact, only the powers of $\lambda$ (i.e. $\pm 1, \pm \lambda$) act trivially on $(E_{j=1728}, P_\lambda)$. Moreover, since $\pm 1$ act trivially on $X_1(N)$, $1$ and $\lambda$ are exactly the elements of $(\mathbb{Z}/N\mathbb{Z})^\times / \{\pm 1\}$ which act trivally on the fibral points of $(E_{j=1728}, C_\lambda)$ via $X_1(N) \to X_0(N)$, so these fibral points are doubly ramified. Note that there are $2^{\omega_{\text{odd}}(N)}$ of these $C_\lambda$. On the other hand, suppose that $C$ is a cyclic subgroup of $E[N](\bar{K})$ that is not any of the $C_\lambda$. The fibral points of $(E,C)$ via $X_1(N) \to X_0(N)$ are of the form $(E,P)$ where $P$ is a generator of $C$. Since $C$ is not any of the $C_\lambda$, no multiple of $P$ by an element of $(\mathbb{Z}/N\mathbb{Z})^\times$ equals $[i] P$ and hence $(\mathbb{Z}/N\mathbb{Z})^\times$ acts faithfully on the fiber of $(E,C)$. Thus, the fibral points of $(E,C)$ are unramified. Therefore, there are exactly $2^{\omega_{\text{odd}}(N)}$ geometric fibral points of $j=1728$ via $\pi_{0,N}$ whose geometric fibral points via $X_1(N) \to X_0(N)$ are doubly ramified as desired.

\end{enumerate}

\end{proof}

\begin{lemma} \label{lem:i_is_diagonalizable_on_E_1728_N}
Let $M$ be a $2 \times 2$ matrix with entries in $\mathbb{Z}/N\mathbb{Z}$ with $N$ an odd integer such that $\mathbb{Z}/N\mathbb{Z}$ has square roots of $-1$. If $M$ has characteristic polynomial $T^2+1$, then $M$ is diagonalizable over $\mathbb{Z}/N\mathbb{Z}$ into the matrix $\begin{pmatrix} \lambda & 0 \\ 0 & -\lambda \end{pmatrix}$, i.e. there is some $P \in \operatorname{GL}_2(\mathbb{Z}/N\mathbb{Z})$ such that $\begin{pmatrix} \lambda & 0 \\ 0 & -\lambda \end{pmatrix} = PMP^{-1}$.
\begin{proof}
By the Chinese remainder theorem, it suffices to show this in the case that $N$ is a prime power, say $N = p^\alpha$.

Write $M = \begin{pmatrix} a_{11} & a_{12} \\ a_{21} & a_{22} \end{pmatrix}$ where $a_{11} = -a_{22}$. First suppose that $a_{11} \not\equiv \lambda \pmod{p}$. In view of \cite[Theorem 2.3]{LAKSOV2013123}, it suffices to show that the determinant of the matrix denoted by $M(T)^c(\lambda, -\lambda)$ is a regular element of $\mathbb{Z}/N\mathbb{Z}$, i.e. is nonzero and is a nonzerodivisor. 

Following notations introduced in \cite{LAKSOV2013123}, $M(T)^c$ equals the matrix $\begin{pmatrix} a_{22} + \lambda & -a_{21} \\ -a_{12} & a_{11} - \lambda \end{pmatrix}$ which has determinant
$$(a_{22} + \lambda)(a_{11} - \lambda) - a_{12} \cdot a_{21} = a_{22} \cdot a_{11} + a_{11} \lambda - a_{22} \lambda + 1 - a_{12} \cdot a_{21} = 2 + 2 \lambda a_{11}.$$
Since $a_{11} \not\equiv \lambda \pmod{p}$, this determinant is invertible in $(\mathbb{Z}/N\mathbb{Z})^\times$.

Now suppose that $a_{11} \equiv \lambda(\pmod{p})$. Conjugating $M$ by $\begin{pmatrix} 0 & 1 \\ 1 & 0 \end{pmatrix}$ yields $\begin{pmatrix} a_{22} & a_{21} \\ a_{12} & a_{11} \end{pmatrix}$. Since $N$ is odd and since $a_{22} = -a_{11}$, $a_{22} \not\equiv \lambda \pmod{p}$. Thus, $M$ is actually diagonalizable into $\begin{pmatrix} \lambda & 0 \\ 0 & -\lambda \end{pmatrix}$.

\end{proof}
\end{lemma}

\begin{lemma} \label{lem:i_is_1_1_0_1_on_E_1728_2}
    Let $K$ be any field of characteristic not $2$ or $3$ and let $E_{j=1728}$ be an elliptic curve of $j$-invariant $1728$. There is a basis of $E_{j=1728}[2](\bar{K})$ with respect to which the automorphism $[i]$ on $E_{j=1728}$ acts by $\begin{pmatrix} 1 & 1 \\ 0 & 1 \end{pmatrix}$.
    \begin{proof}
        $E_{j=1728}$ can be given by the (affine) Weierstrass equation $y^2 = x^3 - x$, and $E_{j=1728}[2](\bar{K})$ has affine points $(\pm 1, 0)$ and $(0,0)$. Moreover, $[i]$ acts on $E_{j=1728}(\bar{K})$ by $(x,y) \mapsto (-x, iy)$ for some square root $i$ in $\bar{K}$. Thus, $[i]$ sends $(1, 0)$ to $(-1, 0)$, which equals $(1,0) + (0,0)$ and fixes $(0,0)$, so $[i]$ acts on $E_{j=1728}[2](\bar{K})$ as $\begin{pmatrix} 1 & 1 \\ 0 & 1 \end{pmatrix}$ with respect to the basis $(1,0), (0,0)$ of $E_{j=1728}[2](\bar{K})$.
    \end{proof}
\end{lemma}

Now we compute the global $\A^1$ degrees of covering maps of modular curves.

\begin{theorem}\label{mainA}
Given a fixed integer $N \geq 2$, let $k$ be any field such that $\text{char}(k) \nmid 6N$. Denote by $\pi \in \{\pi_{N}, \pi_{1,N}, \pi_{0,N} \}$ the covering maps of modular curves over $\mathbb{Z}[1/N]$ from Definition \ref{cover}.
\begin{enumerate}
    \item \label{mainA_Euler} Suppose the line bundle $\pi^*\Oh_{\Proj^1_k}(1)$ is relatively oriented. Then the global Euler $\A^1$ degrees of the covering maps are determined as follows.
    \begin{enumerate}[label=(\alph*)]
        \item \label{mainA_Euler_pi} For any $N \geq 2$,
        \begin{equation*}
            \EAdeg \pi_N = \begin{cases} \frac{1}{2} N^4 \prod_{p | N, \; p \; \text{prime}} \left(1 - \frac{1}{p^2} \right) \left(1 - \frac{1}{p} \right) \left( \langle 1 \rangle + \langle -1 \rangle \right) &\text{ if } N \neq 2 \\
        3(\langle 1 \rangle + \langle -1 \rangle) &\text{ if } N = 2. \end{cases}
        \end{equation*}
        \item \label{mainA_Euler_pi1} For any $N \geq 3$,
        \begin{equation*}
            \EAdeg \pi_{1,N} = \frac{1}{4} N^2 \prod_{p | N, \; p \; \text{prime}} \left(1 - \frac{1}{p^2} \right) \left(\langle 1 \rangle + \langle -1 \rangle \right).
        \end{equation*}
        \item \label{mainA_Euler_pi0} For any $N \geq 3$,
        \begin{equation*}
            \EAdeg \pi_{0,N} = \frac{1}{2} N \prod_{p | N, \; p \; \text{prime}} \left(1 + \frac{1}{p} \right) \left(\langle 1 \rangle + \langle -1 \rangle \right).
        \end{equation*}
    \end{enumerate}
    \item \label{mainA_A1} Suppose the line bundle $\pi^*T \mathbb{P}^1_k$ is relatively oriented. Then the global $\A^1$ degree of the covering maps are determined as follows.
    \begin{enumerate}[label=(\alph*)]
        \item \label{mainA_A1_pi} For any $N \geq 2$,
        \begin{equation*}
        \Adeg \pi_N = \begin{cases} \frac{1}{2} N^4 \prod_{p | N, \; p \; \text{prime}} \left(1 - \frac{1}{p^2} \right) \left(1 - \frac{1}{p} \right) \left( \langle 1 \rangle + \langle -1 \rangle \right) &\text{ if } N \neq 2 \\
        3(\langle 1 \rangle + \langle -1 \rangle) &\text{ if } N = 2. \end{cases}
        \end{equation*}
        \item \label{mainA_A1_pi1} For any $N \geq 3$,
        \begin{equation*}
        \Adeg \pi_{1,N} = \frac{1}{4} N^2 \prod_{p | N, \; p \; \text{prime}} \left(1 - \frac{1}{p^2} \right) \left(\langle 1 \rangle + \langle -1 \rangle \right).
        \end{equation*}
        \item \label{mainA_A1_pi0_hyper} Suppose $N \geq 3$. If either $4 | N$ or  there exists a prime factor $q$ of $N$ such that $q \equiv 3 \; \text{mod} \; 4$, then 
        \begin{equation*}
        \EAdeg \pi_{0,N} = \Adeg \pi_{0,N} = \frac{1}{2} N \prod_{p | N, \; p \; \text{prime}} \left(1 + \frac{1}{p} \right) \left(\langle 1 \rangle + \langle -1 \rangle \right).
        \end{equation*}
        \item \label{mainA_A1_pi0_nonhyper} Suppose $N \geq 3$. If $4$ does not divide $N$ and none of the prime factors $q$ of $N$ satisfy $q \equiv 3 \text{ mod } 4$, then 
        \begin{align*}
        \Adeg \pi_{0,N} &= \frac{1}{2}\left(N \prod_{p \mid N, \; p \; \text{prime}} \left( 1 + \frac{1}{p} \right) - 2^{\omega_{\text{odd}}(N)} \right) \left( \langle 1 \rangle + \langle -1 \rangle \right) \\
        & \; \; \; \; + \sum_{\substack{p \in \pi_{0,N}^{-1}(1728) \\ p \text{ has multiplicity } 1}} \Adeg_p \pi_{0,N},
        \end{align*}
        where $\omega_{\text{odd}}(N)$ is the number of distinct odd prime factors of $N$, and $\Adeg_p \pi_{0,N}$ is the local $\A^1$ degree of $\pi_{0,N}$ at the fibers $p \in \pi_{0,N}^{-1}(1728)$ which do not doubly ramify.
    \end{enumerate}
\end{enumerate}
\end{theorem}

\begin{proof}
We recall from Remark \ref{SW21} that the global Euler $\A^1$-degree of $\pi_{0,N}$ is equal to an integer multiple of the hyperbolic element. For the purpose of self containment, however, we compute the global $\A^1$-degrees explicitly. We first note that the conditions on $N$ from all the statements above are identical to those from Proposition \ref{ram}. Without loss of generality, we prove statements (\ref{mainA_Euler})-\ref{mainA_Euler_pi0}, (\ref{mainA_A1})-\ref{mainA_A1_pi0_hyper}, and (\ref{mainA_A1})-\ref{mainA_A1_pi0_nonhyper} of the theorem, whose proofs can be directly extended to other parts of statements (\ref{mainA_Euler}) and (\ref{mainA_A1}).

Let $N \geq 3$ be any positive integer. For each fiber $p$ of $\pi_{0,N}$ at $j = 1728$, let $k(p) / k$ be a finite field extension such that $p$, as a closed point, is defined over $k(p)$. For all such points $p$, note that $k(p)/k$ is a separable field extension because the covering map $\pi_{0,N}$ can be defined over the prime field of $k$ and such a prime field is always perfect as it is either $\mathbb{Q}$ or $\mathbb{F}_p$ where $p = \operatorname{char}(k)$. The global Euler $\A^1$ degree of $\pi_{0,N}$ is then given by
\begin{equation*}
    \EAdeg \pi_{0,N} = e(\pi_{0,N}, 1728) = \sum_{p \in \pi_{0,N}^{-1}(1728)} \ind_p (j-1728)
\end{equation*}
where $j - 1728$ is considered as a global section over $\pi^* \Oh_{\Proj^1_k}(1)$.

Suppose further that either $4$ divides $N$ or there exists a prime $q \equiv 3 \; \text{mod} \; 4$ which divides $N$. By Lemma \ref{lemma:fiber}, the zero locus of the section $j - 1728$ is the fiber of $\pi_{0,N}$ at $j = 1728$. Proposition \ref{euler double ramify} and Proposition \ref{coverdeg} then show that
\begin{equation*}
    \ind_p (j - 1728) = [k(p):k] (\langle 1 \rangle + \langle -1 \rangle).
\end{equation*}

Because every fiber of $\pi_{0,N}$ at $j=1728$ is doubly ramified, Proposition \ref{A1 degree double ramify} shows that

\begin{align*}
    \EAdeg \pi_{0,N} &= \frac{1}{2} \deg \pi_{0,N} \left( \langle 1 \rangle + \langle -1 \rangle \right) \\
    &= \frac{1}{2} N \prod_{p \mid N, \; p \; \text{prime}} \left( 1 + \frac{1}{p} \right) \left( \langle 1 \rangle + \langle -1 \rangle \right),
\end{align*}
which proves statement (\ref{mainA_Euler})-\ref{mainA_Euler_pi0}.

We note that the affine model of the coarse moduli space $X_0(N)$ can be obtained from the zero sets of the classical modular polynomials $\Phi_n(j(n\tau), j(\tau)) = 0$. By Theorem \ref{thm:two_notions_agree}, we obtain that the global $\A^1$ degree of $\pi_{0,N}$ agrees with the global Euler $\A^1$ degree of $\pi_{0,N}$, thus proving statement (\ref{mainA_A1})-\ref{mainA_A1_pi0_hyper}.

To prove statement (\ref{mainA_A1})-\ref{mainA_A1_pi0_nonhyper}, we use Proposition \ref{ram} that all but $2^{\omega_\text{odd}(N)}$ fibers of $\pi_{0,N}$ at $j = 1728$ doubly ramify. 

To compute the global $\A^1$ degree of $\pi_{0,N}$, we use the following two ideas from the proof of Theorem \ref{thm:two_notions_agree}: One, that the global $\A^1$ degree of $\pi$ is equal to the sum of local $\A^1$ degrees (Definition \ref{sum_local_A1}), which is equivalent to the EKL class of some morphism $F: \A^r \to \A^r$ (Theorem \ref{localEKL}): The other, that the hyperbolic element is invariant under multiplication by elements in $\GW(k)$ of form $\langle a \rangle$ for any $a \in k^\times$. This implies that the local $\A^1$ degrees of $\pi_{0,N}$ at doubly ramified fibers $p \in \pi_{0,N}^{-1}(1728)$ are equal to $[k(p):k](\langle 1 \rangle + \langle -1 \rangle)$. Proposition \ref{ram} hence implies that the global $\A^1$-degree of $\pi_{0,N}$ must contain the element $\frac{1}{2}\left(N \prod_{p \mid N, \; p \; \text{prime}} \left( 1 + \frac{1}{p} \right) - 2^{\omega_{\text{odd}}(N)} \right) \left( \langle 1 \rangle + \langle -1 \rangle \right)$ as a summand.
\end{proof}

In fact, when the modular curve is isomorphic to $\Proj^1_k$, we can compute the global $\A^1$ degrees of covering maps by computing B\'ezout matrices of explicitly constructed equations for covering maps, for instance as found from \cite{mcmurdy} for some modular curves $X_0(N)$, as per Theorem \ref{thm:globalA1Bezout} and by using Proposition \ref{curve_euler_number}. These techniques correspond to computing the local $\A^1$ degrees at fibers of covering maps at $j = 0$. Corollary \ref{euler deg independent} implies that the global $\A^1$ degrees computed at the fibers of $j = 0$ and $j = 1728$ are equal. 

The following list of examples shows how one can explicit compute the global $\A^1$ degrees of covering maps $\pi_{0,p}$ for some prime numbers $p$.
\begin{itemize}
    \item Example \ref{ex:pi_0,3}: We show that both the global Euler $\A^1$ degree and the global $\A^1$ degree of the covering map $\pi_{0,3}: X_0(3) \to X(1)$ are integers multiple of the hyperbolic element, both of which conform to Statements (\ref{mainA_Euler})-\ref{mainA_Euler_pi0} and (\ref{mainA_A1})-\ref{mainA_A1_pi0_hyper} of Theorem \ref{mainA}. 
    \item Example \ref{ex:pi_0,11}: We show that both the global Euler $\A^1$ degree and the global $\A^1$ degree of the covering map $\pi_{0,11}: X_0(11) \to X(1)$ are integers multiple of the hyperbolic element, both of which conform to Statements (\ref{mainA_Euler})-\ref{mainA_Euler_pi0} and (\ref{mainA_A1})-\ref{mainA_A1_pi0_hyper} of Theorem \ref{mainA}. 
    \item Remark \ref{nonex1}: We show that both the global Euler $\A^1$ degree and the global $\A^1$ degree of the covering map $\pi_{0,2}: X_0(2) \to X(1)$ are not well defined over any field $k$, which can be identified using the Atkin-Lehner involution.
    \item Remark \ref{nonex2}: We compute the global Euler $\A^1$ degree and the global $\A^1$ degree of the covering map $\pi_{0,5}: X_0(5) \to X(1)$, the results of which conform to Statement (\ref{mainA_A1})-\ref{mainA_A1_pi0_nonhyper} of Theorem \ref{mainA}.
\end{itemize}

\begin{example} \label{ex:pi_0,3}
Let $N = 3$. We first verify that both the global $\A^1$ degree and the global Euler $\A^1$ degree of $\pi_{0,3}$ are well-defined as long as the characteristic of the base field $k$ is coprime to $2$ and $3$. To see that the global Euler $\A^1$-degree is defined, it suffices to show that $\pi_{0,5}^* \Oh_{\mathbb{P}^1_k}(1)$ is a relatively orientable bundle of $\mathbb{P}^1_k$. Indeed, we have
$$\operatorname{Hom}(\det T \mathbb{P}^1_k, \det \pi_{0,3}^* \Oh_{\mathbb{P}^1_k}(1)) \cong \operatorname{Hom}(\Oh_{\mathbb{P}^1_k}(2), \Oh_{\mathbb{P}^1_k}(4)) \cong \Oh_{\mathbb{P}^1_k}(2) \cong \Oh_{\mathbb{P}^1_k}(1)^{\otimes 2}.$$ Theorem \ref{mainA} then implies that $\EAdeg \pi_{0,3} = 2 ( \langle 1 \rangle + \langle -1 \rangle)$. 

To see that the global $\A^1$-degree is well defined, we show that $\pi_{0,3}^* T \mathbb{P}^1_k$ is relatively orientable (The fact that $T\pi_{0,3}$ is invertible at some point, $\mathbb{P}^1_k$ is $\A^1$-chain connected is immediate). The relative orientability of the pullback of the line bundle follows from
$$\operatorname{Hom}(\det T \mathbb{P}^1_k, \det \pi_{0,3}^* \Oh_{\mathbb{P}^1_k}(2)) \cong \operatorname{Hom}(\Oh_{\mathbb{P}^1_k}(2), \Oh_{\mathbb{P}^1_k}(8)) \cong \Oh_{\mathbb{P}^1_k}(6) \cong \Oh_{\mathbb{P}^1_k}(3)^{\otimes 2}.$$

The covering map $\pi_{0,3}:X_0(3) \to X(1)$ is a morphism from $\Proj^1_k$ to itself, so we can compute the B\'ezout matrix associated to $\pi_{0,3}$ to compute its global $\A^1$ degree (which is the same as the global $\A^1$ degree by Theorem \ref{thm:globalA1Bezout}). The pullback $\pi_{0,3}^*(j)$ of the $j$-invariant can be written explicitly as
\begin{equation*}
    \pi_{0,3}^*(j) = \frac{(t+27)(t + 3)^3}{t}
\end{equation*}
where $t$ is the hauptmodul of $X_0(3)$.
 
Let $F(t) = (t+27)(t+3)^3$ and $G(t) = t$. The polynomial $F(x)G(y) - F(y)G(x)$ factorizes as
\begin{equation*}
    F(x) G(y) - F(y) G(x) = (x-y)(x^3y + x^2y^2 + xy^3 + 36x^2y + 36xy^2 + 270xy - 729).
\end{equation*}
Hence the B\'ezout matrix is
\begin{equation*}
    \begin{pmatrix} -729 & 0 & 0 & 0 \\ 0 & 270 & 36 & 1 \\ 0 & 36 & 1 & 0 \\ 0 & 1 & 0 & 0 \end{pmatrix}
\end{equation*}
which represents the element 
\begin{equation*}
    \Adeg \pi_{0,3} = \langle -729 \rangle + 2 \langle 1 \rangle + \langle -1 \rangle = 2(\langle 1 \rangle + \langle -1 \rangle)
\end{equation*}
of $\GW(k)$.

One may ask whether the Atkin-Lehner involution $w_3: X_0(3) \to X_0(3)$ affects the global $\A^1$ degree of $\pi_{0,3}$. Thankfully, such is not the case. We compute the B\'ezout matrix associated to the Atkin-Lehner involution $w_3$. The pullback of the hauptmodul $t$ under the involution $w_3$ can be written explicitly as
\begin{equation*}
    w_3^*(t) = \frac{729}{t}.
\end{equation*}
The B\'ezout matrix is given by the following $1 \times 1$ matrix
\begin{equation*}
    \begin{pmatrix} -729 \end{pmatrix}
\end{equation*}
which represents the element $\langle -1 \rangle$ of $\GW(k)$. Accordingly, we would expect $\pi_{0,3} \circ w_3$ to have $\A^1$-degree $\langle -1 \rangle \bullet 2( \langle 1 \rangle + \langle -1 \rangle)$ (cf. \cite[Lemma 4.8]{cazanave}), but multiplication by $\langle -1 \rangle$ does not change the global $\A^1$ degree of $\pi_{0,3}$, i.e. the global $\A^1$ degree of $\pi_{0,3}$ is invariant under the action of the Atkin-Lehner involution. 
\end{example}

\begin{example} \label{ex:pi_0,11}
Let $N = 11$. The covering map $\pi_{0, 11}: X_0(11) \to X(1)$ is a morphism from a genus $1$ curve $C$ to $\Proj_k^1$.

We first check what conditions are required to ensure that the global Euler $\A^1$ degree and the global $\A^1$ degrees are well defined. We have
\begin{equation*}
    \pi_{0,11}^* \Oh_{\Proj^1_k}(1) \cong \pi_{0,11}^* \Oh_{\Proj^1_k}((\infty)) = \Oh_{C}(11 \cdot (0) + (\infty)).
\end{equation*}
On the other hand, because $C$ is a genus $1$ curve, the tangent bundle $TC \cong \Oh_C$. This implies
\begin{equation*}
    \operatorname{Hom}(\det TC, \det \pi_{0,11}^* \Oh_{\mathbb{P}^1_k}(1)) \cong \operatorname{Hom}(\Oh_C, \Oh_{C}(11 \cdot (0) + (\infty))) \cong \Oh_{C}(11 \cdot (0) + (\infty)).
\end{equation*}
This implies that the pullback of the line bundle is relatively orientable exactly when the divisor class of $(0) - (\infty)$ is a square. Thus, the global Euler $\A^1$ degree is well defined if and only if $\Oh_C((0)-(\infty))$ is a square line bundle. This cannot happen over $k = \mathbb{Q}$, for example, because the Manin-Drinfeld theorem states that the torsion subgroup of the Jacobian variety $J_0(11)$ of modular curves is generated by $(0) - (\infty)$ (In fact, the similar condition holds for all odd primes $p$). Assuming that $\pi_{0,11}^* \Oh_{\mathbb{P}^1_k}(1)$ is relatively orientable over $k$, we can use Theorem \ref{mainA} to show that $\EAdeg \pi_{0,11} = 6(\langle 1 \rangle + \langle -1 \rangle)$.

Nevertheless, the global $\A^1$ degree of $\pi_{0,11}$ is always well defined as long as $k$ has discriminant coprime to $2$, $3$, and $11$. To see this, we note 
\begin{align*}
    \operatorname{Hom}(\det TC, \det \pi_{0,11}^* \Oh_{\mathbb{P}^1_k}(2)) &\cong \operatorname{Hom}(\Oh_C, \Oh_{C}(22 \cdot (0) + 2 \cdot (\infty))) \\ &\cong \Oh_{C}(11 \cdot (0) + (\infty))^{\otimes 2}.
\end{align*}

Pick $t = \left( \frac{\eta_1}{\eta_{11}} \right)^{12}$, and let $y$ be a variable which satisfies
\begin{equation*}
    y^2 - (t+6)(t^3-2t^2-76t-212) = 0.
\end{equation*}
The equation above defines an affine model of $X_0(11)$. Note that $k(j(\tau), j_{11}(\tau)) = k(t, y)$. The covering map $X_0(11) \to X(1)$ can be written explicitly as $(t,y) \mapsto c_0 - c_1 y$, where

\begin{align*}
    c_0 &= \frac{1}{2}t^{11} - \frac{187}{2}t^9 - 253 t^8 + 5720 t^7 + 28721 t^6 - 92092 t^5 \\
    &- 837892 t^4 - 933856 t^3 + 4126320 t^2 + 9924800 t + 4360000\\
    c_1 &= \frac{1}{2}(t-1)(t+5)(t+4)(t+2)(t-10)(t^2-2t-44)(t^2-20).
\end{align*}

Consider the morphism $\phi_{0, 11}: \A_k^2 \to \A_k^2$ such that 
\begin{equation*}
    \phi_{0,11}(t,y) = (y^2 - (t+6)(t^3-2t^2-76t-212), c_0 - c_1 y).
\end{equation*}
Let $(t^*, y^*) \in \pi_{0, 11}^{-1}(1728)$ be any choice of a fiber. Denote by $k(t^*, y^*)$ the separable field extension over $k$ generated by $t^*$ and $y^*$.

Proposition \ref{curve_euler_number} implies that for any fiber $(t^*, y^*) \in \pi_{0, 11}^{-1}(1728)$, there exists a unit $\alpha \in k(t^*, y^*)$ such that
\begin{equation*}
    \ind_{(t^*, y^*)} (j-1728) = \langle \alpha \rangle \bullet \ind_{(t^*, y^*)} (j-1728).
\end{equation*}
We note that if $y = \frac{c_0 - 1728}{c_1}$, then the polynomial $y^2 - (t+6)(t^3 - 2t^2 - 76t - 212)$ can be reexpressed as
\begin{equation*}
    \frac{4(7641728 + 6475200t + 2113680t^2 + 325856 t^3 + 22692 t^4 + 492t^5 - t^6)^2}{c_1^2}.
\end{equation*}
Set $\alpha_{11} = 7641728 + 6475200t + 2113680t^2 + 325856 t^3 + 22692 t^4 + 492t^5 - t^6$. 

Calculating the local index of $\phi_{(0,11)}$ at $(t^*, y^*) \in f^{-1}(0, 1728)$ is equivalent to calculating the EKL class of $\phi_{0, 11}$ over the localization of the ring $R$ at $(t^*, y^*)$. The ring $R$ is given as
\begin{align*}
    R &= k(t^*, y^*) \left[ t, \frac{c_0 - 1728}{c_1} \right] / (y^2 - (t+6)(t^3 - 2t^2 - 76t - 212)) \\
    &= k(t^*, y^*) \left[ t, \frac{1}{c_1} \right] / \left( \frac{4 \alpha_{11}^2}{c_1^2} \right) \\
    &= k(t^*, y^*) \left[ t, \frac{1}{c_1} \right] / \left( \frac{4 \alpha_{11}^2}{c_1} \right).
\end{align*}

Since every root of $\alpha_{11} = 0$ doubly ramifies, we have
\begin{equation*}
    \langle \alpha \rangle \bullet \ind_{(t^*, y^*)} (j-1728) = \langle 1 \rangle + \langle -1 \rangle
\end{equation*}
for any $\alpha \in k(t^*, y^*)^\times$. Using the definition of the global $\A^1$ degree, we conclude that
\begin{equation*}
    \Adeg \pi_{0, 11} = \sum_{(t^*, y^*) \in \pi_{0, 11}^{-1}(1728)} [k(t^*,y^*) : k](\langle 1 \rangle + \langle -1 \rangle) = 6(\langle 1 \rangle + \langle -1 \rangle).
\end{equation*}
\end{example}

\begin{remark}\label{nonex1}
When $N = 2$, we demonstrate that both global Euler $\A^1$ degrees is not well defined by virtue of showing that $\pi_{0,2}^* \Oh_{\mathbb{P}^1_k}(1)$ is not relatively orientable. Indeed, we have
$$\operatorname{Hom}(\det T \mathbb{P}^1_k, \det \pi_{0,2}^* \Oh_{\mathbb{P}^1_k}(1)) \cong \operatorname{Hom}(\Oh_{\mathbb{P}^1_k}(2), \Oh_{\mathbb{P}^1_k}(3)) \cong \Oh_{\mathbb{P}^1_k}(1),$$
but the line bundle $\Oh_{\mathbb{P}^1_k}(1)$ is not a square for any field $k$.

The conditions necessary for the existence of global $\A^1$ degrees are all satisfied. It suffices to show that the pullback $\pi_{0,2}^* T \mathbb{P}^1_k$ is relatively orientable. This is because
$$\operatorname{Hom}(\det T \mathbb{P}^1_k, \det \pi_{0,2}^* T \mathbb{P}^1_k) \cong \operatorname{Hom}(\Oh_{\mathbb{P}^1_k}(2), \Oh_{\mathbb{P}^1_k}(4)) \cong \Oh_{\mathbb{P}^1_k}(2) \cong \Oh_{\mathbb{P}^1_k}(1)^{\otimes 2}.$$

One may attempt to compute the B\'ezout matrix associated to the covering map $\pi_{0,2}:X_0(2) \to X(1)$, but the Atkin-Lehner involution changes the $\A^1$ degree of $\pi_{0,2}$.

Let $k$ be any field whose characteristic is coprime to $2$. Let $t$ be a hauptmodul of $X_0(2)$. Then the pullback of the $j$-invariant $\pi_{0,2}^{*}(j)$ can be written explicitly as
\begin{equation*}
    \pi_{0,2}^*(j) = \frac{(t + 16)^3}{t}.
\end{equation*}
The B\'ezout matrix associated to $\pi_{0,2}$ is the matrix
\begin{equation*}
    \begin{pmatrix} -4096 & 0 & 0 \\ 0 & 1 & 1 \\ 0 & 1 & 0 \end{pmatrix}
\end{equation*}
whose corresponding element in $\GW(k)$ is $\langle -4096 \rangle + \langle 1 \rangle + \langle -1 \rangle = 2 \langle 1 \rangle + \langle -1 \rangle$. However, after applying the Atkin-Lehner involution, the pullback of the $j$-invariant can also be written as
\begin{equation*}
    \pi_{0,2}^*(j) = \frac{(t+256)^3}{t^2}.
\end{equation*}
The B\'ezout matrix associated to $\pi_{0,2}$ is
\begin{equation*}
    \begin{pmatrix} 0 & -16777216 & 0 \\ -16777216 & -196608 & 0 \\ 0 & 0 & 1 \end{pmatrix}
\end{equation*}
inducing the element $2\langle -1 \rangle + \langle 1 \rangle$ of $\GW(k)$. 

We now verify that the Atkin-Lehner involution $w_2: X_0(2) \to X_0(2)$ affects the global $\A^1$ degree of $\pi_{0,2}$. The pullback of the hauptmodul $t$ under the involution $w_2$ can be written as
\begin{equation*}
    w_2^*(t) = \frac{4096}{t}
\end{equation*}
which corresponds to the class $\langle -1 \rangle$. We note that the two $\A^1$ degrees of $\pi_{0,2}$ are related via multiplication by $\langle -1 \rangle$:
\begin{equation*}
    \langle -1 \rangle (2 \langle 1 \rangle + \langle -1 \rangle) = 2 \langle -1 \rangle + \langle 1 \rangle.
\end{equation*}

We hence observe that the global $\A^1$ degree of $\pi_{0,2}$ depends on the choice of the pullback of the $j$-invariant, thus is not well defined.
\end{remark}

\begin{remark}\label{nonex2}
Suppose that $N$ is a positive integer that is not divisible by $4$ and that has no prime factors which are $3$ modulo $4$.  Then the global $\A^1$ degree of $\pi_{0,N}: X_0(N) \to \mathbb{P}^1$ may not be equal to the global Euler $\A^1$ degree of $\pi_{0,N}$, because the latter is equal to an integer multiple of the hyperbolic element \cite[Proposition 19]{SW21} whereas the same might not hold for the former.

For instance, suppose $N = 5$. Let $t$ be a hauptmodul of $X_0(5)$, which is of genus $0$. The pullback of the $j$-invariant is given by
\begin{equation*}
    \pi_{0,5}^*(j) = \frac{(t^2+10t+5)^3}{t}.
\end{equation*}
In particular, $\pi_{0,5}$ is a degree $6$ map $\mathbb{P}^1 \to \mathbb{P}^1$.

We establish that both the global $\A^1$ degree and the global Euler $\A^1$ degree of $\pi_{0,5}$ are defined regardless of which base field $k$ of characteristic not $2$ or $5$ we work over. To see that the global Euler $\A^1$-degree is defined, it suffices to show that $\pi_{0,5}^* \Oh_{\mathbb{P}^1_k}(1)$ is a relatively orientable bundle of $\mathbb{P}^1_k$ --- indeed,
$$\operatorname{Hom}(\det T \mathbb{P}^1_k, \det \pi_{0,5}^* \Oh_{\mathbb{P}^1_k}(1)) \cong \operatorname{Hom}(\Oh_{\mathbb{P}^1_k}(2), \Oh_{\mathbb{P}^1_k}(6)) \cong \Oh_{\mathbb{P}^1_k}(4) \cong \Oh_{\mathbb{P}^1_k}(2)^{\otimes 2}.$$ In particular, further assuming that $k$ is not of characteristic $3$, Theorem \ref{mainA} applies to demonstrate that $\EAdeg \pi_{0,5} = 3 ( \langle 1 \rangle + \langle -1 \rangle)$. 

To see that the global $\A^1$-degree is defined, it suffices to show that $T\pi_{0,5}$ is invertible at some point, that $\mathbb{P}^1_k$ is $\A^1$-chain connected, and that $\pi_{0,5}$ is relatively orientable as a morphism $\mathbb{P}^1_k \to \mathbb{P}^1_k$, i.e. $\pi_{0,5}^* T\mathbb{P}^1_k \cong \pi_{0,5}^* \Oh_{\mathbb{P}^1_k}(2)$ is relatively orientable as a bundle on $\mathbb{P}^1_k$. The former two conditions are immediate, whereas the latter condition holds because
$$\operatorname{Hom}(\det T \mathbb{P}^1_k, \det \pi_{0,5}^* \Oh_{\mathbb{P}^1_k}(2)) \cong \operatorname{Hom}(\Oh_{\mathbb{P}^1_k}(2), \Oh_{\mathbb{P}^1_k}(12)) \cong \Oh_{\mathbb{P}^1_k}(10) \cong \Oh_{\mathbb{P}^1_k}(5)^{\otimes 2}.$$

To compute $\Adeg \pi_{0,5}$, we compute the B\'ezout matrix associated to $\pi_{0,5}$, which is the matrix
\begin{equation*}
    \begin{pmatrix} -125 & 0 & 0 & 0 & 0 & 0 \\ 0 & 1575 & 1300 & 315 & 30 & 1 \\ 0 & 1300 & 315 & 30 & 1 & 0 \\ 0 & 315 & 30 & 1 & 0 & 0 \\ 0 & 30 & 1 & 0 & 0 & 0 \\ 0 & 1 & 0 & 0 & 0 & 0 \end{pmatrix}
\end{equation*}
which induces the element in $\GW(k)$ is $3\langle 1 \rangle + 2\langle -1 \rangle + \langle -5 \rangle$. This element equals $3 \langle 1 \rangle + 3 \langle -1 \rangle$ exactly when $\langle -1 \rangle = \langle -5 \rangle$, which happens exactly when $5$ is a square in $k^\times$. 

We also note that the Atkin-Lehner involution $w_5$ of $X_0(5)$ pulls back the $j$-invariant as
\begin{equation*}
    w_5^*(j) = \frac{125}{t}.
\end{equation*}
The $\A^1$ degree of the Atkin-Lehner involution is $\langle -5 \rangle$. Note that the Atkin-Lehner involution still preserves the global $\A^1$ degree of $\pi_{0,5}$.
\end{remark}

\section{Future directions}
We end the paper with a discussion of two questions that naturally arise from previous sections.
\begin{enumerate}
    \item Let $\pi: X(\Gamma) \to X(1)$ be the covering map of modular curves where $\pi^* \Oh(1)$ is relatively oriented over the field $k$. Let $Y(\Gamma) := \pi^{-1}(\A^1_k)$.
    
    Using Proposition \ref{curve_euler_number_2}, Theorem \ref{mainA} implies that the global $\A^1$ degree of $\pi: X(\Gamma) \to X(1)$ is equal to the Euler number of $\Oh_{Y(\Gamma)}$ with respect to the $j$-invariant, considered as a weakly holomorphic modular form of weight $0$. It is hence a natural question to ask which arithmetic properties do Euler numbers of line bundles with respect to holomorphic modular forms of weight $2k$ over $X(\Gamma)$ or those with respect to weakly holomorphic modular forms of weight $2k$ over $Y(\Gamma)$ imply.
    
    \item In this paper, we viewed modular curves as coarse moduli schemes over $\Z[1/N]$. However, we may also consider moduli stacks of elliptic curves with level structures. Kobin and Taylor \cite{ktstack} extended $\A^1$ homotopy theory and the construction of Euler numbers to root stacks. In a similar manner, we may extend the definition of $\A^1$ degrees for morphisms of stacks. Then, we can compute the $\A^1$ degrees of covering maps $\pi_\Gamma: \mathcal{M}(\Gamma) \to \mathcal{M}_{1,1}$ considered as morphisms of moduli stacks of elliptic curves with level structures, where $\Gamma$ is any congruence subgroup of $\textrm{SL}_2(\Z)$. Lastly, we may ask how global $\A^1$ degrees of covering maps between coarse moduli schemes and $\A^1$ degrees of covering maps between moduli stacks compare to each other.
\end{enumerate} 

\subsection*{Acknowledgements}

We sincerely thank Jordan Ellenberg for suggesting the problem and patiently giving constructive comments and encouragement to us. We would like to thank Kirsten Wickelgren for giving a very enlightening talk at the 2019 Arizona Winter School on $\A^1$ enumerative geometry, as well as for giving invaluable feedback via email. We would like to thank Libby Taylor and Andrew Kobin for sharing their progress on extending $\A^1$ homotopy theory to root stacks and for giving helpful feedback and discussions via email. The second author would like to thank Junhwa Jung for helpful discussions on calculating traces of Grothendieck-Witt group elements.

Hyun Jong Kim was partially supported by the National Science Foundation Award DMS-1502553. Sun Woo Park was partially supported by National Institute for Mathematical Sciences (NIMS) grant funded by the Korean Government (MSIT) B20810000.

\nocite{*}
\bibliographystyle{alpha}
\bibliography{main}

\begin{thebibliography}{KLSW23}

\bibitem[ADF17]{adf17}
Aravind Asok, Brent Doran, and Jean Fasel.
\newblock Smooth models of motivic spheres and the clutching construction.
\newblock {\em International Mathematics Research Notices}, 2017:1890--1925,
  2017.

\bibitem[AF21]{af18}
Aravind Asok and Jean Fasel.
\newblock Euler class groups and motivic stable cohomotopy.
\newblock {\em Journal of the European Mathematical Society}, 24(8), 2021.

\bibitem[Baa10]{model1}
Houria Baaziz.
\newblock Equations for the modular curve $\text{X}_1(\text{N})$ and models of
  elliptic curves with torsion points.
\newblock {\em Mathematics of Computation}, 79(272):2371--2386, 2010.

\bibitem[BW23]{BW23}
Tom Bachmann and Kirsten Wickelgren.
\newblock Euler classes: six-functors formalism, dualities, integrality, and
  linear subspaces of complete intersections.
\newblock {\em Journal of the Institute of Mathematics of Jussieu},
  22(2):681--746, 2023.

\bibitem[Caz12]{cazanave}
Christophe Cazanave.
\newblock Algebraic homotopy classes of rational functions.
\newblock {\em Ann. Scient. \'{E}c. Norm. Sup.}, 45(4):511--534, 2012.

\bibitem[DI95]{diamondim}
Fred Diamond and John Im.
\newblock Modular forms and modular curves.
\newblock {\em Canadian Mathematical Society Conference Proceedings},
  17:39--133, 1995.

\bibitem[Elk98]{elkis}
Noam Elkies.
\newblock Elliptic and modular curves over finite fields and related
  computational issues.
\newblock {\em Computational Perspectives on Number Theory}, pages 21 -- 76,
  1998.

\bibitem[Gal96]{galbraith}
Steven~D. Galbraith.
\newblock Equations for modular curves.
\newblock {\em Ph.D. thesis, available at
  https://www.math.auckland.ac.nz/~sgal018/thesis.pdf}, 1996.

\bibitem[GZ86]{gzheegner}
Benedict~H. Gross and Don~B. Zagier.
\newblock Heegner points and derivatives of $\text{L}$-series.
\newblock {\em Inventiones Mathematicae}, 84(2):225--320, 1986.

\bibitem[Har77]{hartshorne}
Robin Hartshorne.
\newblock {\em Algebraic Geometry}.
\newblock Springer, New York, New York, 1977.

\bibitem[Hoy14]{hoyois}
Marc Hoyois.
\newblock A quadratic refinement of the $\text{Grothendieck-Lefschetz-Verdier}$
  trace formula.
\newblock {\em Algebr. Geom. Topol.}, 14(6):3603--3658, 2014.

\bibitem[KLSW23]{kass2023quadraticallyenrichedcountrational}
Jesse~Leo Kass, Marc Levine, Jake~P. Solomon, and Kirsten Wickelgren.
\newblock A quadratically enriched count of rational curves, 2023.

\bibitem[KM85]{km85}
Nicholas~M. Katz and Barry Mazur.
\newblock {\em Arithmetic Moduli of Elliptic Curves}, volume 108.
\newblock Princeton University Press, 1985.

\bibitem[Kol89]{kolyvagin}
Victor Kolyvagin.
\newblock Finiteness of $\text{E}(\mathbb{Q})$ and
  $\text{X}(\text{E},\mathbb{Q})$ for a class of weil curves.
\newblock {\em Math. USSR Izv}, 32(3):523--541, 1989.

\bibitem[KT19]{ktstack}
Andrew Kobin and Libby Taylor.
\newblock $\mathbb{A}^1$ local degree via stacks.
\newblock {\em Preprint available at https://arxiv.org/abs/1911.05955}, 2019.

\bibitem[Kum78]{ku78}
N.~Mohan Kumar.
\newblock On two conjectures about polynomial rings.
\newblock {\em Inventiones Mathematicae}, pages 225--236, 1978.

\bibitem[KW19]{kwEKL}
Jesse~Leo Kass and Kirsten Wickelgren.
\newblock The class of $\text{Eisenbud-Khimshiashvili-Levine}$ is the local
  $\mathbb{A}^1-\text{Brouwer}$ degree.
\newblock {\em Duke Mathematical Journal}, 168(3):429--469, 2019.

\bibitem[KW20]{kwBezout}
Jesse~Leo Kass and Kirsten Wickelgren.
\newblock A classical proof that the algebraic homotopy class of a rational
  function is the residue pairing.
\newblock {\em Linear Algebra and its Applications}, 595:157--181, 2020.

\bibitem[KW21]{kwcubic}
Jesse~Leo Kass and Kirsten Wickelgren.
\newblock An arithmetic count of the lines on a smooth cubic surface.
\newblock {\em Compositio Mathematica}, 157(4):677--709, 2021.

\bibitem[Lak13]{LAKSOV2013123}
Dan Laksov.
\newblock Diagonalization of matrices over rings.
\newblock {\em Journal of Algebra}, 376:123--138, 2013.

\bibitem[Lev20]{levine}
Marc Levine.
\newblock Aspects of enumerative geometry with quadratic forms.
\newblock {\em Documenta Mathematica}, 25:2179--2239, 2020.

\bibitem[Maz77]{mazur1}
Barry Mazur.
\newblock Modular curves and the $\text{Eisenstein}$ ideal.
\newblock {\em Publications Mathematiques de l'IHES}, 47(1):33--186, 1977.

\bibitem[Maz78]{mazur2}
Barry Mazur.
\newblock Rational isogenies of prime degree, with an appendix by $\text{Dorian
  Goldfeld}$.
\newblock {\em Inventiones Mathematicae}, 44(2):129--162, 1978.

\bibitem[McM08]{mcmurdy}
Ken McMurdy.
\newblock Explicit equations for $\text{X}_0(\text{N})$.
\newblock {\em Based on lecture notes titled ``Eta Products and Models for
  Modular Curves'' from Heilbronn Workshop on Computations with Modular Forms.
  Website available at
  https://phobos.ramapo.edu/~kmcmurdy/research/Models/index.html}, 2008.

\bibitem[Mil17]{milne}
James~S. Milne.
\newblock Modular functions and modular forms.
\newblock {\em Course notes available at
  https://www.jmilne.org/math/CourseNotes/MF.pdf}, 2017.

\bibitem[Mor06]{morel06}
Fabien Morel.
\newblock $\mathbb{A}^1$-algebraic topology.
\newblock {\em Proceedings of the International Congress of Mathematicians},
  2:1035--1060, 2006.

\bibitem[MV99]{mv}
Fabien Morel and Vladimir Voevodsky.
\newblock $\mathbb{A}^1$-homotopy theory of schemes.
\newblock {\em Publications math\'{e}matiques de l'I.H.\'{E}.S.}, 90:45--143,
  1999.

\bibitem[OT14]{okotel14}
Christian Okonek and Andrei Teleman.
\newblock {Intrinsic signs and lower bounds in real algebraic geometry}.
\newblock {\em {Journal f{\"u}r die reine und angewandte Mathematik}},
  (688):219--241, March 2014.

\bibitem[PW21]{PW21}
Sabrina Pauli and Kirsten Wickelgren.
\newblock Applications to a1-enumerative geometry of the a1-degree.
\newblock {\em Research in the Mathematical Sciences}, (24), 2021.

\bibitem[Sai13]{saito}
Takeshi Saito.
\newblock {\em Fermat's Last Theorem: Basic Tools}.
\newblock American Mathematical Soc., Princeton, New Jersey, 2013.

\bibitem[Shi71]{shimura}
Goro Shimura.
\newblock {\em Introduction to the arithmetic theory of automorphic functions}.
\newblock Princeton University Press, Princeton, New Jersey, 1971.

\bibitem[Sil09]{silverman}
Joseph~H. Silverman.
\newblock {\em The Arithmetic of Elliptic Curves: 2nd Edition}.
\newblock Springer, New York, New York, 2009.

\bibitem[Sno11]{snowden}
Andrew Snowden.
\newblock Real components of modular curves.
\newblock {\em Preprint available at https://arxiv.org/abs/1108.3131}, 2011.

\bibitem[SS75]{SS75}
G.~Scheja and U.~Storch.
\newblock {\"{U}}ber spurkunktionen bei vollst{\"{a}}ndigen durchschnitten.
\newblock {\em J. Reine Angew. Math.}, 278/279:174--190, 1975.

\bibitem[{Sta}18]{stacks-project}
The {Stacks Project Authors}.
\newblock \textit{Stacks Project}.
\newblock \url{https://stacks.math.columbia.edu}, 2018.

\bibitem[SW21]{SW21}
Padmavathi Srinivasan and Kirsten Wickelgren.
\newblock An arithmetic count of the lines meeting four lines in p3, with an
  appendix by borys kadets, padmavathi srinivasan, ashvin a. swaminathan, libby
  taylor, and dennis tseng.
\newblock {\em Transactions of the American Mathematical Society},
  374(5):3427--3451, 2021.

\bibitem[vdGM11]{vm11}
Gerard van~der Geer and Ben Moonen.
\newblock Abelian varieties.
\newblock {\em Book in Preparation, available at
  http://page.mi.fu-berlin.de/elenalavanda/BMoonen.pdf}, 2011.

\bibitem[Wic19]{AWS}
Kirsten Wickelgren.
\newblock $\mathbb{A}^1$-enumerative geometry.
\newblock {\em Arizona Winter School 2019: Topology and Arithmetic, Notes
  available at http://swc.math.arizona.edu/aws/2019/2019WickelgrenNotes.pdf},
  2019.

\bibitem[Yan06]{model2}
Yifan Yang.
\newblock Defining equations of modular curves.
\newblock {\em Advances in Mathematics}, 204(2):481--508, 2006.

\end{thebibliography}

\end{document}